\theoremstyle{plain}
\newtheorem{thm}{Theorem}[section]
\newtheorem{cor}[thm]{Corollary}
\newtheorem{lem}[thm]{Lemma}
\newdefinition{defin}[thm]{Definition}
\newdefinition{rem}[thm]{Remark}
\newcommand{\FF}[1]{\mathbb F_{#1}}
\newcommand{\CC}{\mathbb C}
\newcommand{\NN}{\mathbb N}
\newcommand{\irr}{\textup{Irr}}
\newcommand{\sbs}{\subseteq}
\newcommand{\nor}{\vartriangleleft}
\newcommand{\ra}{\rightarrow}
\newcommand{\dg}{\delta (g)}
\newcommand{\pg}{\pi (g)}
\newcommand{\mb}[1]{ u_1, u_2,\ldots,  u_{#1}}
\newcommand{\diag}{\textup{diag}}
\newcommand{\s}{\scriptstyle}
\newcommand{\End}{\textup{End}}
\newcommand{\aut}{\textup{Aut}}
\newcommand{\out}{\textup{Out}}
\newcommand{\id}{\textup{id}}
\newcommand{\cP}{\mathscr P}
\newcommand{\ve}{\varepsilon}
\renewcommand{\t}{\times}
    \def\ps@pprintTitle{%
      \let\@oddhead\@empty
      \let\@evenhead\@empty
      \def\@oddfoot{\reset@font\hfil\thepage\hfil}
      \let\@evenfoot\@oddfoot
}
\begin{document}
\begin{frontmatter}
\title{Every coprime linear group admits a base of size two}

\author[DE]{Zolt\'an Halasi\corref{cor1}\fnref{otka}} 
\ead{halasi.zoltan@renyi.mta.hu}
\author[BGF]{K\'aroly Podoski} 
\ead{podoski.karoly@pszfb.bgf.hu} 

\address[DE]{Institute of Mathematics, University of Debrecen, P.\ O.\ Box
  12, H-4010 Debrecen, Hungary} 
\address[BGF]{Budapest Business School, College of Finance and Accountancy, 
  Buzog\'any street 10-12, H-1149  Budapest, Hungary}

\fntext[otka]{Research partially supported by Hungarian National
  Research Fund (OTKA) under grant number K84233.}
\cortext[cor1]{Corresponding author}
%---------------
\begin{abstract}
  Let $G$ be a linear group acting on the finite vector space $V$ and
  assume that $(|G|,|V|) =1$. In this paper we prove that $G$ has a
  base size at most two and this estimate is sharp. This generalizes
  and strengthens several former results concerning base sizes of
  coprime linear groups. As a direct consequence, we answer a question
  of I.~M.~Isaacs in the affirmative. Via large orbits this is related
  to the $k(GV)$ theorem.
\end{abstract}
%---------------
\begin{keyword}
coprime linear group \sep base size \sep regular partition
\MSC[2010] 20C15\sep 20B99 
\end{keyword}
%---------------
\end{frontmatter}
%%%%%%%%%%%%%%%%%%%%%%%%%%%%%%%%%%%%%%%%%%%%%%%%%%%%
\section{Introduction}
For a finite permutation group $H\leq \mathrm{Sym}(\Omega )$ a subset
$B=\{\omega_1,\dots, \omega_n\}\subseteq\Omega$ is called a base, if
its pointwise stabilizer in $G$ is the identity. There is a number of
algorithms for permutation groups related to the concept of base and
these algorithms are faster if the size of the base is small. Hence,
for both practical and theoretical reasons it is important to find
small bases for permutation groups. 
The minimal base size $b(H)$ of a permutation group $H$
is at least $\log |H|/\log |\Omega |$. Concerning upper bounds for
$b(H)$, a lot of results is proved if $H$ is solvable, if the action
of $H$ is primitive, or if $(|H|,|\Omega|)=1$.

For primitive groups it is asked by L.~Pyber \cite{pyber} that
the minimal base size is less than $C\log |H|/\log |\Omega |$ for some
universal constant $C$. Using the O'Nan-Scott theorem and CFSG, this question
has been reduced to the case when $H=V\rtimes G$ is an affine linear group 
such that $G\leq GL(V)$ acts irreducibly on the finite vector space $V$.

In case when $H$ is even solvable \'A.~Seress \cite{seress} proved
that the minimal base size is at most four. As the example of the full
affine linear group $AGL(V)$ shows, there is no absolute upper bound
for the minimal base size of affine groups.

A linear group $G\leq GL(V)$ is called coprime if $(|G|,|V|)=1$. 
If $G\leq GL(V)$ is such a group, then the affine group $H=V\rtimes G$ is
called a coprime affine group. It turns out that the minimal base size
of coprime affine groups is bounded. By a result of D.~Gluck and
K.~Magaard \cite{gluck_magaard} the minimal base size of any coprime
affine group is at most 95. As $b(H)=b(G)+1$ for an affine group $H$,
this means that $b(G)\leq 94$ for any coprime linear group.  Even for
coprime nilpotent linear groups $G\leq GL(V)$, regular orbits on $V$
do not always exist, so the best one can hope for is that $b(G)\leq 2$
for any coprime linear group $G\leq GL(V)$, that is, $C_G(x)\cap
C_G(y)=1$ for some $x,y\in V$.

In the past, the existence of such a pair of vectors was confirmed for
several special types of coprime linear groups such as for supersolvable groups
(T.~R.~Wolf \cite{wolf}), for groups of odd order (A.~Moreto and
T.~R.~Wolf \cite{moreto}) and for solvable groups (S.~Dolfi
\cite{dolfi_biz} and E.~P.~Vdovin \cite{vdovin_biz}). In our paper we
prove it without any additional assumption for $G$.  Our main result
is the following
%------------------
\begin{thm}\label{thm:main_2base}
  Let $V$ be a finite vector space and $G\leq GL(V)$ be a coprime linear
  group. Then
  $b(G)\leq 2$ for this action, i.e. there exist $x,y\in V$ such that
  $C_G(x)\cap C_G(y)=1$.
\end{thm}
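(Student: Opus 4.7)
The plan is to induct on $\dim V$, reducing first to the case that $V$ is an irreducible $G$-module, then to the primitive case, and finally to handle primitive coprime irreducible linear groups via the Aschbacher-type dichotomy for linear groups.

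For the reducible-to-irreducible reduction, coprimality lets one invoke Maschke's theorem: write $V = V_1 \oplus V_2$ with both summands $G$-invariant, and set $K_i = C_G(V_i)$. By induction, $G/K_1$ admits a base of size two on $V_1$, realised by $x_1,x_1' \in V_1$ with $C_G(x_1) \cap C_G(x_1') = K_1$; similarly one obtains small bases for $K_1$ on $V_2$. The glueing step, that is, combining these vectors in $V_1 \oplus V_2$ to produce an honest base of size two for $G$ on $V$, is the first delicate point and typically requires a regular-orbit (or small-base) statement for $K_1$ on $V_2$ coming from the induction together with an orbit-counting argument that exploits $(|K_1|, |V_2|) = 1$.

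For the imprimitive-to-primitive reduction, if $V$ is irreducible but imprimitive, decompose $V = W_1 \oplus \cdots \oplus W_k$ with $G$ transitively permuting the $W_i$. Let $H$ be the stabilizer of $W_1$ and $L$ the kernel of $H \to GL(W_1)$. Induction applied to the primitive action of $H/L$ on $W_1$ yields a base there, which one then completes to a base on $V$ by choosing further coordinates in the remaining blocks that break the permutation action of $G$ on $\{W_1, \ldots, W_k\}$, using the wreath product structure. The subtle step is again matching centralizers correctly, since we only control $H/L$ and must use the coprimality condition to argue about the intersection with $L$.

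In the primitive irreducible case, the Aschbacher-type structure theorem leaves essentially three families: tensor decompositions $V = U \otimes W$ preserved by $G$, normalizers of symplectic-type $r$-groups, and almost simple groups acting irreducibly. Coprimality is very restrictive here: for groups of Lie type it excludes defining-characteristic modules, leaving cross-characteristic representations where lower bounds on orbit sizes are favorable. The tensor-product case is handled by a secondary induction on the factor dimensions. The main obstacle, which I expect is where most of the paper's work sits, is the almost simple case: here one needs sharp bounds on fixed-point ratios, using CFSG, to guarantee that two generic vectors have trivial joint centralizer, with additional case-by-case attention for sporadic groups and for small-rank classical groups where the bounds are tightest.
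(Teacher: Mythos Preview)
Your outline follows essentially the same architecture as the paper, but there are two concrete gaps and one misplaced emphasis.

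First, the reducible step is much easier than you suggest: if $V=V_1\oplus V_2$ and by induction $C_G(x_i)\cap C_G(y_i)=C_G(V_i)$ for $i=1,2$, then $x_1+x_2,\ y_1+y_2$ is already a base for $G$, since $C_G(x_1+x_2)\cap C_G(y_1+y_2)=C_G(V_1)\cap C_G(V_2)=1$. No orbit-counting is needed. Second, the genuine gap is in the imprimitive reduction. The paper's key mechanism there is a \emph{regular partition} result: if a permutation group $G$ on $\Omega$ satisfies $p\nmid |G|$, then $\Omega$ admits a partition into $p$ parts whose setwise stabilizers intersect trivially (this relies on Dolfi's and Seress's classification work and ultimately on CFSG). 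One then transports the base $x_1,y_1$ from $V_1$ to all blocks and perturbs $y$ by $a_i x_i$ where the scalars $a_i\in\FF p$ encode which part of the partition $V_i$ lies in; coprimality forces any $g$ fixing both $x$ and $y$ to be unipotent on $\langle x_1,y_1\rangle$, hence trivial on each block. Your phrase ``choosing further coordinates \ldots\ that break the permutation action'' does not supply this ingredient, and without it the argument does not close. Third, in the primitive case you are missing the semilinear reduction: when a normal $N\nor G$ acts homogeneously but not absolutely irreducibly, one passes via $\End_{\FF qN}(V_1)$ to a larger field $T$, and $G$ only embeds in $\Gamma L(l,T)$. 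The paper needs a separate lemma (Lemma~\ref{lem:semi}) showing that a two-element base for $G\cap GL(l,T)$ can be adjusted to one for $G$, again using coprimality to kill a unipotent obstruction. Finally, for the almost quasisimple case the paper does not use fixed-point-ratio bounds as you propose; instead it invokes the Goodwin--K\"ohler--Pahlings classification of coprime almost quasisimple groups with no regular orbit (a finite list plus the deleted permutation modules for $A_c$, $c<p$) and verifies $b(G)\le 2$ for each exception by hand. Your fixed-point-ratio route is plausible in spirit but would require its own substantial argument.
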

%-------------------
Using a lemma of Hartley and Turull \cite[Lemma 2.6.2]{hatu} one can
obtain a more general corollary
%-------------------
\begin{cor}\label{cor:1_2base}
  If $G$ is a group acting faithfully on a group $K$ and $(|G|,
  |K|)=1$ then there exist $x,y\in K$ such that $C_G(x)\cap C_G(y)=1$.
\end{cor}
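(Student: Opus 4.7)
The plan is to bootstrap from Theorem \ref{thm:main_2base} (the elementary abelian case) to a general finite group $K$ by means of a chief-series reduction packaged by the Hartley-Turull lemma. The first step is to fix a $G$-invariant normal series $1 = K_0 \triangleleft K_1 \triangleleft \cdots \triangleleft K_n = K$ whose factors $V_i = K_i/K_{i-1}$ are elementary abelian $p_i$-groups; the coprimeness hypothesis $(|G|,|K|)=1$ ensures each $p_i$ is prime to $|G|$. The induced action of $G_i := G/C_G(V_i)$ on $V_i$ is then faithful and coprime, so Theorem \ref{thm:main_2base} supplies $x_i, y_i \in V_i$ with $C_G(x_i) \cap C_G(y_i) = C_G(V_i)$.

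Next I would invoke the Hartley-Turull lemma, whose role in this setting is to lift any prescribed choice of elements from the factors of a coprime $G$-invariant normal series up to actual elements of $K$ whose $G$-centralizers are the intersected section-level centralizers. Applied separately to the families $\{x_i\}$ and $\{y_i\}$, this produces $x, y \in K$ with $C_G(x) = \bigcap_i C_G(x_i)$ and $C_G(y) = \bigcap_i C_G(y_i)$, whence
\[
C_G(x) \cap C_G(y) \;=\; \bigcap_i \bigl(C_G(x_i) \cap C_G(y_i)\bigr) \;=\; \bigcap_i C_G(V_i).
\]
A standard coprime-action argument -- any element of $G$ that centralizes every factor of a coprime $G$-invariant normal series of $K$ centralizes $K$ itself -- together with the faithfulness of the action then forces this intersection to be trivial, completing the proof.

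The main obstacle is the precise application of Hartley-Turull: one must verify that the lemma supports the lifting for each of the two independent families $\{x_i\}$ and $\{y_i\}$, and that its hypotheses cover the generality in which the corollary is stated. Should its formal statement be restricted to solvable $K$, the remaining possibility -- chief factors that are products of nonabelian simple groups -- would need to be handled separately, exploiting how severely the coprimeness of $|G|$ and $|K|$ together with the faithfulness of the action restrict which simple groups can appear and which subgroups of their automorphism groups $G$ can embed into.
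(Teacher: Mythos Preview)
Your proposal is correct and aligns with the paper's approach, which consists of nothing more than the one-line citation of Hartley--Turull's Lemma~2.6.2 followed by Theorem~\ref{thm:main_2base}; you have simply supplied the details the paper omits.

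One remark on your final caveat. You frame the reduction via a $G$-invariant chief series and therefore worry about nonabelian chief factors when $K$ is not solvable. In fact the Hartley--Turull lemma in the cited form is stronger than this: it asserts that for \emph{any} coprime action of $G$ on $K$, the $G$-set $K$ is isomorphic, as a $G$-set, to a direct product of elementary abelian $G$-invariant sections of $K$. The proof does not proceed through a chief series but rather through $G$-invariant Sylow subgroups of $K$ (which exist precisely because $(|G|,|K|)=1$), and then through $G$-invariant normal series inside those nilpotent Sylow subgroups. Consequently nonabelian composition factors of $K$ never need to be confronted directly; they are dissolved into their Sylow pieces before the section-by-section argument begins. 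Once $K$ is replaced by such a $G$-set $K^*=\prod_i V_i$ with each $V_i$ elementary abelian, Theorem~\ref{thm:main_2base} applied to each $G/C_G(V_i)\leq GL(V_i)$ yields $x_i,y_i\in V_i$ with $C_G(x_i)\cap C_G(y_i)=C_G(V_i)$, and setting $x=(x_i),\ y=(y_i)\in K^*$ gives $C_G(x)\cap C_G(y)=\bigcap_i C_G(V_i)=1$ by faithfulness. Pulling $x,y$ back through the $G$-set isomorphism finishes the argument. So your worry, while prudent, is already absorbed by the lemma you invoke.
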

%-------------------
  As a direct consequence of
Corollary \ref{cor:1_2base} we have another proof for a theorem of
P.~P.~P\'alfy and L.~Pyber.
%-------------------
\begin{thm}[P.~P.~P\'alfy, L.~Pyber {\cite[Theorem 1.]{pyberp3}}]
  If $G$ is a group acting faithfully on a group $K$ and $(|G|,
  |K|)=1$ then $|G|<|K|^2$.
\end{thm}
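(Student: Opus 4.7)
The plan is to convert the conclusion of Corollary~\ref{cor:1_2base} into the desired cardinality bound via a one-step orbit-counting argument on $K \times K$. I would first apply Corollary~\ref{cor:1_2base} to obtain $x, y \in K$ with $C_G(x) \cap C_G(y) = 1$. Consider the diagonal action of $G$ on the Cartesian product $K \times K$; the stabilizer of $(x,y)$ under this action is exactly $C_G(x) \cap C_G(y)$, which is trivial. Hence the orbit-stabilizer theorem yields that the $G$-orbit of $(x,y)$ has cardinality $|G|$, and since this orbit is a subset of $K \times K$ we immediately get $|G| \le |K|^2$.

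To sharpen this to a strict inequality, I would use the fact that $G$ acts on $K$ by automorphisms, so $1_K \in K$ is fixed by $G$ and therefore $(1_K, 1_K)$ is a fixed point of the diagonal action. Assuming $G$ is nontrivial, this singleton orbit is disjoint from the regular orbit of $(x,y)$, so the latter misses at least one element of $K \times K$, giving $|G| \le |K|^2 - 1 < |K|^2$. The degenerate case $G = 1$ with $|K| \ge 2$ is immediate.

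There is really no obstacle to overcome here: all the substantive content has already been packaged into Corollary~\ref{cor:1_2base} (and ultimately into Theorem~\ref{thm:main_2base}). Once a pair of elements of $K$ with trivial joint centralizer is available, the P\'alfy--Pyber bound drops out of orbit-stabilizer applied to the diagonal action on $K \times K$, together with the observation that $(1_K, 1_K)$ is always a fixed point of this action.
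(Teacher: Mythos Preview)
Your argument is correct and is precisely the direct deduction from Corollary~\ref{cor:1_2base} that the paper alludes to without spelling out; the paper gives no separate proof beyond declaring the theorem a consequence of that corollary. (One entirely trivial edge case: your case split misses $G=K=1$, where the strict inequality fails, but this degenerate situation is conventionally excluded.)
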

%-------------------
There is a general consequence of Corollary
\ref{cor:1_2base}, namely, in a faithful coprime action of a group
$G$, there always exists an orbit of size at least $\sqrt{|G|}$. This
answers a question raised by I.~M.~Isaacs.
%-------------------
\begin{cor}\label{cor:2_2base}
  If $G$ is a group acting faithfully on a group $K$ and $(|G|,
  |K|)=1$ then there exists $x\in K$ such that $|C_G(x)|\leq
  \sqrt{|G|}$.
\end{cor}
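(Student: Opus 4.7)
The plan is to derive Corollary \ref{cor:2_2base} directly from Corollary \ref{cor:1_2base} by a short counting argument. Given the faithful coprime action of $G$ on $K$, Corollary \ref{cor:1_2base} furnishes elements $x,y\in K$ such that $C_G(x)\cap C_G(y)=1$. The whole point is that a trivial intersection of two stabilizers forces at least one of them to be small.

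First I would consider the product set $C_G(x)\cdot C_G(y)\subseteq G$. The standard formula
\[
|C_G(x)\cdot C_G(y)|=\frac{|C_G(x)|\,|C_G(y)|}{|C_G(x)\cap C_G(y)|}
\]
combined with $C_G(x)\cap C_G(y)=1$ gives $|C_G(x)|\,|C_G(y)|=|C_G(x)\cdot C_G(y)|\leq |G|$. Hence
\[
\min\bigl(|C_G(x)|,|C_G(y)|\bigr)\leq\sqrt{|G|},
\]
and choosing whichever of $x,y$ realizes the minimum yields the desired element.

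There really is no obstacle here once Corollary \ref{cor:1_2base} is in hand; the argument is purely a pigeonhole on the sizes of two subgroups with trivial intersection. The only thing to note is that no assumption on $K$ being abelian, or on the action being linear, is needed at this stage, because Corollary \ref{cor:1_2base} has already absorbed the reduction from arbitrary coprime actions to the linear case via the Hartley--Turull lemma.
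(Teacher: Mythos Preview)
Your argument is correct and is exactly the standard deduction the paper has in mind: it states Corollary~\ref{cor:2_2base} as a ``general consequence'' of Corollary~\ref{cor:1_2base} without spelling out a proof, and the product-formula/pigeonhole step you give is precisely how one fills in the details.
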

%-------------------
Note that this corollary answers a special case of an important question of
G.~Malle and G.~Navarro \cite[Question 10.1]{mana} (with $P=1$).

%-------------------
This paper is organized as follows. In Section 2 we investigate the
question that if $G$ is a permutation group acting on $\Omega$ then,
under some assumptions on $G$, at most how many parts must
$\Omega $ be partitioned into such that only the identity element of $G$
fixes every part of this partition. Using results of S.~Dolfi
\cite{dolfi_perm} and \'A.~Seress \cite{seress2}, the main result of
this section is Theorem \ref{thm:regpart_notalt}.  Using this theorem,
we finish Section 2 by reducing Theorem \ref{thm:main_2base} to
primitive linear groups. In the following three sections we solve the
problem for tensor product actions, for almost quasisimple groups and
for groups of symplectic type by using results and methods from the
papers of M.~W.~Liebeck and A.~Shalev \cite{LS}, C.~K\"ohler and
H.~Pahlings \cite{kopa} and an unpublished manuscript of the authors
\cite{arxiv_sajat}.  In the final section we prove Theorem
\ref{thm:main_2base} for arbitrary coprime linear groups by using an
induction argument and the results of the previous sections.
%%%%%%%%%%%%%%%%%%%%%%%%%%%%%%%%%%%%%%%%%%%%%%%%%%%%
\section{Regular partitions for permutation groups}
%---------------------------------------
\begin{defin}
  Let $\Omega$ be a finite set and let $G\leq S_\Omega$ be a
  permutation group on $\Omega$. A partition
  $\Omega=\cup_{i=1}^t\Omega_i$ is called $G$-regular, if
  $\cap_{i=1}^t G_{\Omega_i}=1$, where $G_X$ denotes the setwise
  stabilizer of $X$ for any $X\sbs\Omega$.  In our terminology, empty
  sets as parts of a partition are allowed.
\end{defin}
%---------------
\begin{rem}
  In graph theory, this concept is sometimes referred to as
  distinguishing partition, and the minimal number of parts $t$ of a
  distinguishing partition is called the distinguishing number.
  In graph theory, the distinguishing number is just the minimal
  number of colors needed to color the vertices of a graph in such a
  way that the colored graph has trivial automorphism group.
\end{rem}
%---------------------------------------
\begin{thm}\label{thm:regpart_notalt}
  Let $\Omega$ be a finite set and let $G\leq S_\Omega$ be a
  permutation group such that $G$ does not contain the alternating
  group $A_t$ as a section for some $t\geq 3$. Then there exists a
  $G$-regular partition of $\Omega$ with $t$ parts.
\end{thm}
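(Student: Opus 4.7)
The plan is to induct on $|\Omega|+|G|$, exploiting the fact that the hypothesis ``$A_t$ is not a section of $G$'' is inherited by every section of $G$, hence by every quotient and subgroup appearing in the reductions. The induction splits into three structural cases depending on whether $G$ acts intransitively, transitively but imprimitively, or primitively.

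First I would dispose of the base case $|\Omega|\le t$ by taking the partition into singletons padded with empty parts, which is tautologically $G$-regular. If $G$ acts intransitively, I would write $\Omega=\Omega'\sqcup\Omega''$ as a union of two $G$-invariant subsets and let $G',G''$ be the induced quotients; both still satisfy the hypothesis. By induction each admits a regular partition with $t$ parts, and taking unions of parts with matching labels gives a partition of $\Omega$ whose setwise stabilizer in $G$ must act trivially on both $\Omega'$ and $\Omega''$, hence trivially on $\Omega$.

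For the transitive imprimitive case I would take a nontrivial block system $\mathcal B=\{B_1,\dots,B_m\}$ and let $H\le S_{\mathcal B}$ be the image of $G$ acting on blocks, $L\le S_{B_1}$ the image of the block stabilizer on a block; both are sections of $G$ and strictly smaller in the induction parameter. Induction applied to $H$ yields an $H$-regular partition $\mathcal B=\mathcal B_1\sqcup\dots\sqcup \mathcal B_t$, and induction applied to $L$ yields an $L$-regular partition of a model block into $t$ parts. After transporting the within-block partition coherently to all blocks (via coset representatives of $G_{B_1}$ in $G$) and combining it with the block-class partition, any $g\in G$ that fixes every resulting part must first act trivially on $\mathcal B$ (forced by $H$-regularity, once the block-class information is encoded into the combined partition) and then act trivially inside every block (forced by $L$-regularity together with transitivity), so $g=1$.

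Finally, for primitive $G$ without $A_t$ as a section, the results of Dolfi \cite{dolfi_perm} and Seress \cite{seress2} provide a $G$-regular partition with at most $t$ parts; padding with empty parts yields exactly $t$. The main obstacle is the imprimitive reduction: the within-block partitions must be coordinated across all blocks lying in the same block class so that an element of $G$ permuting such blocks is not hidden by the within-block structure, while the combined partition must still have exactly $t$ parts. Setting up this coordination and verifying that the combined partition is genuinely $G$-regular is where the bulk of the technical bookkeeping lies.
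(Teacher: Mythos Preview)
Your intransitive reduction is fine, and your primitive case is essentially the same as the paper's (though you omit the case-check against the short list of exceptional primitive groups in \cite{dolfi_perm,seress2}, which the paper carries out in Table~\ref{tab:pr_perm}).

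The real problem is the imprimitive step, and it is not merely bookkeeping. You have an $H$-regular $t$-colouring of the blocks and a single $L$-regular $t$-colouring of one block, transported coherently to all blocks; you then need a $G$-regular $t$-colouring of $\Omega$. Any scheme that uses the \emph{same} within-block colouring on every block (or even the same colouring up to a label-shift depending only on the block-class) will fail: an element $g$ that carries $B_k$ to $B_l$ in the same $H$-class and, in transported coordinates, realizes a nontrivial relabelling of the within-block partition, will fix every colour class without being detected by either the $H$-partition or the $L$-partition. There is no reason $L$ must be free of such ``shift'' symmetries, so the argument does not close. Concretely, from the sole hypothesis ``$L$ has one regular $t$-partition'' you cannot rule out that some element of $L$ permutes its parts nontrivially, and that is exactly what an inter-block move may induce.

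This is precisely why the paper does \emph{not} induct on the statement you wrote down. Following Dolfi \cite[Theorem~2]{dolfi_perm}, the paper works with the stronger inductive hypothesis ``every primitive component has at least $t$ distinct regular orbits on $\cP_t$''. The multiplicity is essential: it supplies $t$ genuinely inequivalent within-block colourings, one for each block-class, so that the assignment of colourings to blocks itself encodes the $H$-partition and any inter-block move is detected. The passage from ``one regular orbit on $\cP_s$ for some $s<t$'' to ``at least $t$ regular orbits on $\cP_t$'' is handled by \cite[Remark~2]{dolfi_perm}. If you want to salvage your inductive scheme, you should strengthen the hypothesis in exactly this way; as it stands, the imprimitive reduction has a genuine gap.
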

%----------
\begin{proof}
  Our proof is based on a paper of S.\ Dolfi \cite{dolfi_perm}.
  Following the notation of Dolfi, for every $s\in \NN,\ s\geq 2$ let
  \[
  \cP_s(\Omega)=\left\{(\Lambda_1,\Lambda_2,\ldots,\Lambda_{s})
  \,|\,\Lambda_i\sbs \Omega,\ 
  \Lambda_i\cap \Lambda_j=\emptyset\textrm{ for }
  i\neq j,\ \cup_{i=1}^{s}\Lambda_i=\Omega\right\}
  \]
  be the set of (ordered) partitions of $\Omega$ into $s$ parts.  Then
  $G$ acts on $\cP_s(\Omega)$ in a natural way for any $s$ and we
  need to prove that $G$ has a regular orbit on $\cP_t(\Omega)$.
  We will use the following results proved by Dolfi: \textit{
    \begin{enumerate}
    \item If for some $t\in \NN$ every primitive component $(H,\Delta)$ of 
      $G$ has at least $t$ different regular orbits on $\cP_t(\Delta)$, 
      then $G$ has a regular orbit on $\cP_t(\Omega)$. 
      (see \cite[Theorem 2.]{dolfi_perm} with $\mu=\{\textrm{all primes}\}$)
    \item
      If $G$ has a regular orbit on $\cP_s(\Omega)$ for some $s<t$, then 
      $G$ has at least $t$ different regular orbits on $\cP_t(\Omega)$. 
      (use \cite[Remark 2.]{dolfi_perm} repeatedly) 
    \end{enumerate}
  } 
  By using \textit{1.}, we only need to prove that every primitive
  component $(H,\Delta)$ of $G$ has at least $t$ distinct regular
  orbits on $\cP_t(\Delta)$.  Noticing that $H$ has a regular
  orbit on $\cP_2(\Delta)$ if and only if there is a subset
  $X\sbs\Delta$ such that the setwise stabilizer $H_X$ of $X$ in $H$ is
  $1$ we can use a result of \'A.~Seress \cite[Theorem
  2.]{seress2}.  (see also \cite[Theorem 1.]{dolfi_perm}) In Seress'
  paper, the primitive permutation groups $(H,\Delta)$ satisfying
  $H\ngeq A_{|\Delta|}$ and having no regular orbit on $\cP_2(\Delta)$ have
  been determined using CFSG and the GAP system \cite{gap}. (We
  note that there is a small difference between the list of groups
  given by Seress and the list appearing in Dolfi's paper, since the
  latter assumes $H\ngeq A_{|\Delta|}$ only for $n\geq 5$. In the
  following we use Dolfi's list.) If $(H,\Delta)$ has a regular orbit
  on $\cP_2(\Delta)$, then it has at least $t$ distinct regular orbits
  on $\cP_t(\Delta)$ by using \textit{2.}. If $H\geq A_{|\Delta|}$
  but $H$ does not contain $A_t$ as a section, then $|\Delta|<t$, so
  $(H,\Delta)$ has a regular orbit on $\cP_s(\Delta)$ for
  $s=|\Delta|<t$, therefore, it has at least $t$ distinct regular
  orbits on $\cP_t(\Delta)$. Hence it remains to show that
  $(H,\Delta)$ has at least $t$ regular orbits on $\cP_t(\Omega)$
  in case $(H,\Delta)$ is one of the 46 groups listed in
  \cite[Theorem 1.]{dolfi_perm}. For these groups, the minimal value
  $s$ for which $H$ has a regular orbit on $\cP_s(\Omega)$, along with
  their multiplicities, have been calculated by Dolfi (see \cite[Lemma
    1. a,c]{dolfi_perm}). By using the GAP system \cite{gap} he proved
  that $H$ has at least three regular orbits on $\cP_3(\Delta)$ (hence
  it has at least $t$ regular orbits on $\cP_t(\Delta)$ for any
  $t\geq 3$) unless $(H,\Delta)$ is as in Table \ref{tab:pr_perm}).

  \begin{table}[ht]
    \begin{center}
      \begin{tabular}{@{}lcccc@{}}
        \toprule
        &&&\multicolumn{2}{c}{Number of regular orbits}\\
        \cmidrule(r){4-5}
        H	&$|\Delta|$&$\max\{t\,|\,A_t\,sec.\, H\}$&
        on $\cP_3(\Delta)$& on $\cP_4(\Delta)$\\
        \midrule
        $S_3$		&3&   3        &1&$\geq 4$\\
        $PSL(2,5)$	&6&   5        &1&$\geq 4$\\
        $P\Gamma L(2,8)$&9&   3        &1&$\geq 4$\\
        $S_4$		&4&   4        &-&\quad 1\\
        $PGL(2,5)$	&6&   5        &-&$\geq 4$\\
        $PSL(3,2)$	&7&   4        &-&$\geq 4$\\
        $M_{11}$	        &11&  6        &-&$\geq 4$\\
        $M_{12}$	        &12&  6        &-&$\geq 4$\\
        $ASL(3,2)$	&8&   4        &-&\quad 1\\
        \bottomrule
      \end{tabular}
    \end{center}
    \caption{Primitive groups with less than three regular orbits on
      $\cP_3(\Delta)$.\label{tab:pr_perm}}
  \end{table}

  For $t=3$, we get $(H,\Delta)$ cannot be any of these primitive
  permutation groups. For $t=4$, the only possibilities for $H$ are
  $S_3$ and $P\Gamma L(2,8)$ but in these cases there are at least $4$
  regular orbits on $\cP_4(\Delta)$. Finally, for $t\geq 5$ each of these primitive permutation groups has at least $t$ regular
  orbits on $\cP_t(\Delta)$.
\end{proof}
%-------------------------------
\begin{cor}\label{cor:dist_num}
  Let us assume that $G\leq S_\Omega$ with $t\nmid |G|$.
  Then there exists a $G$-regular partition of $\Omega$ with $t$ parts.
\end{cor}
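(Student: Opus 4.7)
The plan is to derive this as an immediate divisibility corollary of Theorem \ref{thm:regpart_notalt}: I will show that the hypothesis $t \nmid |G|$ already rules out $A_t$ as a section of $G$, so the theorem applies directly.

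For the main case $t \geq 3$, the key elementary fact is that $t$ divides $|A_t| = t!/2$. This follows from writing $t!/2 = t \cdot (t-1)!/2$ and noting $(t-1)!/2 \in \NN$: the value equals $1$ at $t = 3$, and for $t \geq 4$ the factor $(t-1)!$ is even. Now assume for contradiction that $A_t$ arises as a section of $G$, say $A_t \cong H/N$ with $N \nor H \leq G$. Then $|A_t|$ divides $|H|$, hence $|G|$, which combined with $t \mid |A_t|$ forces $t \mid |G|$ and contradicts the hypothesis. Hence $A_t$ is not a section of $G$, and Theorem \ref{thm:regpart_notalt} immediately produces a $G$-regular partition of $\Omega$ into $t$ parts.

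The edge cases $t \in \{1, 2\}$ need a short separate look. The case $t = 1$ is vacuous, since $1 \mid |G|$ always. For $t = 2$ the hypothesis says $|G|$ is odd, and one must exhibit an $X \sbs \Omega$ whose setwise stabilizer is trivial; I would handle this via the same reduction to primitive components used in the proof of Theorem \ref{thm:regpart_notalt} (Dolfi's result applied with $s = 2$), combined with the observation that the primitive groups in the Seress/Dolfi list lacking two regular orbits on $\cP_2(\Delta)$ all have even order, so no primitive component of an odd-order $G$ can appear there. I expect verifying the $t = 2$ case to be the only step requiring any work beyond the one-line divisibility argument that handles the main case.
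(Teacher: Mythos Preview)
Your argument for $t \geq 3$ is exactly the paper's: since $t \mid |A_t|$, the hypothesis $t \nmid |G|$ rules out $A_t$ as a section, and Theorem~\ref{thm:regpart_notalt} applies directly.

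For $t = 2$ the paper takes a shorter route than you propose: it simply invokes Gluck's theorem \cite[Corollary~1]{gluck} that every permutation group of odd order has a subset with trivial setwise stabilizer. Your plan to rerun the Dolfi/Seress reduction at $s=2$ can be made to work (one checks that odd-order primitive components are not on the Seress list, handles separately the components with $H \geq A_{|\Delta|}$, and uses oddness to upgrade one regular orbit on $\cP_2(\Delta)$ to two, since no involution is available to swap the parts), but this is a detour compared with the paper's one-line citation. So the only difference between your proposal and the paper is that, for $t=2$, you are re-deriving a known result that the paper quotes off the shelf.
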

%------------
\begin{proof}
  For $t=2$ we have that $G$ is a group of odd order, hence there is a
  $G$-regular partition of $\Omega$ with $2$ parts by a result of D.\
  Gluck \cite[Corollary 1.]{gluck}.  Otherwise, if $t\nmid |G|$, then
  $G$ cannot contain $A_t$ as a section, so there is a $G$-regular
  partition of $\Omega$ with $t$ parts by Theorem
  \ref{thm:regpart_notalt}
\end{proof}
%-------------------------------
\begin{rem}
  Without using CFSG, it can be shown that if $G$ is a permutation group on
  $\Omega$ such that no $g\in G$ contains a cycle of length
  greater than $t$, then there is a $G$-regular partition of $\Omega$
  with $t$ parts.
\end{rem}
%-------------------------------
By using Corollary \ref{cor:dist_num}, we close this section by reducing
Theorem \ref{thm:main_2base} to primitive linear groups.
%-------------------------------
\begin{thm}\label{thm:imprim}
  Let us assume that $b(H)\leq 2$ for any finite vector space $W$ and for any 
  coprime primitive linear group $H\leq GL(W)$. Then $b(G)\leq 2$ for any 
  finite vector space $V$ and for any coprime linear group $G\leq GL(V)$.
\end{thm}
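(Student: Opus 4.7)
The plan is to induct on $\dim V$, with the base case $G$ primitive supplied by hypothesis. If $V$ is $G$-reducible then Maschke gives $V = W \oplus W'$ with $W, W'$ proper nonzero $G$-invariant subspaces (using $(|G|,|V|)=1$). The restrictions $G|_W$ and $G|_{W'}$ are coprime linear of strictly smaller dimension and the inductive hypothesis supplies 2-bases $(x_W,y_W)$ and $(x_{W'},y_{W'})$ for them. The pair $(x_W+x_{W'},\,y_W+y_{W'})$ is then a 2-base for $G$ on $V$, since any $g$ centralizing both sums fixes each $G$-invariant summand separately and so acts trivially on both $W$ and $W'$. So we may assume $V$ is $G$-irreducible; if in addition $G$ is primitive we are done, so we take a nontrivial imprimitivity decomposition $V = V_1 \oplus \cdots \oplus V_n$ ($n \geq 2$) with $G$ transitively permuting the $V_i$.

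Write $H = \text{Stab}_G(V_1)$; the restriction $H|_{V_1}$ is coprime linear on a lower-dimensional space, so induction supplies a 2-base $(x_1, y_1) \in V_1 \times V_1$ for it, and a small case check lets us insist $x_1 \neq 0$. Transport these via chosen $g_i \in G$ with $g_i V_1 = V_i$, $g_1 = 1$, to $x_i = g_ix_1$, $y_i = g_iy_1 \in V_i$. Letting $N = \bigcap_i \text{Stab}_G(V_i)$, the quotient $G/N$ acts faithfully on $\Omega = \{V_1, \ldots, V_n\}$; since $p = \kar(V)$ satisfies $p \nmid |G|$, Corollary~\ref{cor:dist_num} yields a $G/N$-regular partition $\Omega = \Omega_1 \cup \cdots \cup \Omega_p$. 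Fix distinct $c_1, \ldots, c_p \in \FF{p}$ and write $j(i)$ for the color of $V_i$; the candidate 2-base is
\[
u = \sum_{i=1}^{n} x_i, \qquad v = \sum_{i=1}^{n} \big(y_i + c_{j(i)}\, x_i\big).
\]

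The verification goes as follows. Take $g \in C_G(u) \cap C_G(v)$ with induced permutation $\sigma$ on $\Omega$. Comparing $V_{\sigma(i)}$-components of $gu = u$ and $gv = v$ gives $g x_i = x_{\sigma(i)}$ and $g y_i = y_{\sigma(i)} + c_i'\, x_{\sigma(i)}$ with $c_i' = c_{j(\sigma(i))} - c_{j(i)} \in \FF{p}$. Conjugating back, $h_i := g_{\sigma(i)}^{-1} g g_i \in H$ satisfies $h_i x_1 = x_1$ and $h_i y_1 = y_1 + c_i' x_1$; iterating gives $h_i^k y_1 = y_1 + k c_i' x_1$, and taking $k = |h_i|$---coprime to $p$ since it divides $|G|$---forces $c_i' = 0$ using $x_1 \neq 0$. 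Hence $\sigma$ preserves the partition, so by regularity $\sigma = 1$; the 2-base property of $(x_1, y_1)$ then gives $h_i|_{V_1} = 1$ for every $i$, so $g$ is trivial on each $V_i$ and $g = 1$.

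The delicate point I expect to take the most thought is the construction of $v$. The naive choice $v = \sum y_i$ fails to involve the regular partition and leaves the induced permutation $\sigma$ unconstrained. The skew correction $c_{j(i)}\, x_i$ converts the coloring of $\Omega$ into an additive shift of $y_1$ by a scalar multiple of $x_1$ inside $V_1$, and the telescoping iteration $h_i^k y_1 = y_1 + kc_i' x_1$ combined with the coprimality of $|h_i|$ and $p$ forces the shift to vanish. This is the bridge transferring the regular partition of $\Omega$ into triviality of $\sigma$, after which the inductive 2-base on $V_1$ closes the argument.
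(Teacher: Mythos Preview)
Your proof is correct and follows essentially the same route as the paper's: induction on $\dim V$, Maschke for the reducible case, and in the imprimitive case the pair $\big(\sum x_i,\ \sum(y_i+c_{j(i)}x_i)\big)$ built from a $p$-part regular partition of the block set via Corollary~\ref{cor:dist_num}. The only minor differences are that the paper refines the imprimitivity system so that $H|_{V_1}$ is primitive and invokes the hypothesis directly (you instead appeal to the induction hypothesis on $\dim V_1<\dim V$, which works just as well), and that the paper phrases the ``$c_i'=0$'' step via the $2\times 2$ unitriangular matrix of $h_i$ on $\langle x_1,y_1\rangle$ having order $p$, whereas you iterate $h_i^k y_1=y_1+kc_i'x_1$; these are the same argument. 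Your explicit remark that one may take $x_1\neq 0$ is a small improvement in precision over the paper, which uses this implicitly.
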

%----------
\begin{proof}
  Let $G\leq GL(V)$ be any coprime imprimitive linear group acting on
  $V$, where $V$ is a finite vector space over the finite field $K$ of
  characteristic $p$.  By Maschke's theorem, $V$ is completely reducible
  as a $KG$-module.  If $V$ is not irreducible as a $KG$-module, then
  $V=V_1\oplus V_2$ for some proper $G$-invariant subspaces
  $V_1,V_2\leq V$.  By using induction on $\dim V$, for $i=1,2$ set
  $x_i,y_i\in V_i$ such that $C_G(x_i)\cap C_G(y_i)=C_G(V_i)$. Then
  $C_G(x_1+x_2)\cap C_G(y_1+y_2)= C_G(V_1)\cap C_G(V_2)=1$.

  In the following let $G\leq GL(V)$ be an irreducible, imprimitive
  linear group.  Thus, there is a decomposition $V=\oplus_{i=1}^k V_i$
  such that $k\geq 2$ and $G$ permutes the subspaces $V_i$ in a
  transitive way. We can assume that the decomposition cannot be
  refined.  For each $1\leq i\leq k$ let $H_i=\{g\in G\;|\;
  gV_i=V_i\}$ be the stabilizer of $V_i$ in $G$.  Then
  $H_i/C_{H_i}(V_i)\leq GL(V_i)$ is a primitive linear group, and the
  subgroups $H_i$ are conjugate in $G$.  Of course, $(|H_1|,|V_1|)=1$,
  so, by using the assumption, we can find vectors $x_1,y_1\in V_1$
  such that $C_{H_1}(x_1)\cap C_{H_1}(y_1)=C_{H_1}(V_1)$.  Let
  $\{g_1=1, g_2,\ldots,g_k\}$ be a set of left coset representatives
  for $H_1$ in $G$ such that $V_i=g_iV_1$ for all $1\leq i\leq k$ and
  let $x_i=g_ix_1$, $y_i=g_iy_1$. It is clear that
  $H_i=H_1^{g_i^{-1}}$ and $C_{H_i}(x_i)\cap
  C_{H_i}(y_i)=C_{H_i}(V_i)$.

  Now, $N=\cap_{i=1}^k H_i$ is a normal subgroup of $G$, the quotient
  group $G/N$ acts faithfully and transitively on the set
  $\Omega=\{V_1,V_2,\ldots,V_k\}$, and $|G/N|$ is coprime to $p$.
  Using Corollary \ref{cor:dist_num}, there is a $G/N$-regular
  partition of $\Omega$ into $p$ parts, say,
  $\Omega=\Lambda_1\cup\ldots\cup\Lambda_p$. Then we can choose a
  vector $(a_1,a_2,\ldots, a_k)\in \FF p^k$ such that $a_i=a_j$ if and
  only if $V_i$ and $V_j$ are in the same part of the partition
  $\Omega=\Lambda_1\cup\ldots\cup\Lambda_p$.  Now, let the vectors
  $x,y\in V$ be defined as
  \[
  x=\sum_{i=1}^k x_i,\qquad\qquad y=\sum_{i=1}^k (y_i+a_ix_i).
  \]
  We claim that $C_G(x)\cap C_G(y)=1$.  Let $g\in C_G(x)\cap
  C_G(y)$. Assuming that $gV_i=V_j$ for some $1\leq i,j\leq k$ we get
  $gx_i=x_j$ and $g(y_i+a_ix_i)=(y_j+a_jx_j)$.  Let
  $g'=g_j^{-1}gg_i\in G$. Then
  \begin{equation}\label{eq:imprim:unitri} 
    g'x_1=x_1 \textrm{\quad and\quad} g'(y_1+a_ix_1)=(y_1+a_ix_1)+(a_j-a_i)x_1,
  \end{equation}
  so $g'$ stabilizes the subspace $\left<x_1,y_1\right>\leq V_1$.  If
  $y_1=cx_1$ for some $c\in \FF p$, then $g'y_1=y_1$.  Using Equation
  (\ref{eq:imprim:unitri}) we get $a_j=a_i$. Otherwise,
  $x_1,y_1+a_ix_1$ form a basis of the $\langle g'\rangle$-invariant
  subspace $\langle x_1,y_1\rangle$.  With respect to this basis the
  restriction of $g'$ to this subspace has matrix form
  \[
  \begin{pmatrix}1&a_j-a_i\\ 0&1\\ \end{pmatrix}.
  \]
  If $a_j-a_i\neq 0$, then this matrix has order $p$, so $p$ divides
  the order of $g'\in G$, a contradiction. Hence in any case $a_i=a_j$
  holds for $gV_i=V_j$, which exactly means that $gN\in G/N$
  stabilizes the vector $(a_1,a_2,\ldots,a_k)$.  It follows that $g\in
  N$. So $gx_i=x_i$ and $gy_i=y_i$ holds for any $1\leq i\leq k$, and
  $g\in\cap_{i=1}^k C_{H_i}(V_i)=C_G(V)=1$ follows.
\end{proof}
%%%%%%%%%%%%%%%%%%%%%%%%%%%%%%%%%%%%%%%%%%%%%%%%%%%%
\section{Tensor product actions}
The purpose of this section is to find a base for linear groups acting
on tensor product spaces. Throughout this section every vector space
is assumed to be finite dimensional over the finite field $\FF q$ for
some prime power $q$. The group of scalar transformations will be
denoted by $Z$.

We start this section with the following definition introduced in \cite{LS}.
%---------------------------------
\begin{defin}
  For a linear group $G\leq GL(V)$ a strong base for $G$ is a set
  $B\sbs V$ of vectors such that any element $g\in G$ fixing $\langle
  v\rangle$ for every $v\in B$ is a scalar matrix.  The minimal size
  of a strong base for $G$ is denoted by $b^*(G)$.
\end{defin}
%---------------------------------
\begin{rem} 
  It was proved by Liebeck and Shalev \cite[Lemma 3.1]{LS} that
  $b(G)\leq b^*(G)\leq b(G)+1$ holds for any linear group $G\leq
  GL(V)$.
\end{rem}
%----------------------------------
For the base sizes and strong base sizes of coprime linear
groups we have the following.
%---------------
\begin{lem}\label{lem:strong}
  Let $G\leq GL(V)$ be a coprime linear group containing $Z$ with a
  two-element base $u_1,u_2\in V$. Then $u_1,\ u_1+\gamma u_2$ is a
  strong base for some $\gamma\in \FF q$.
\end{lem}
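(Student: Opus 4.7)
Let $H = \{g \in G : g\langle u_1\rangle = \langle u_1\rangle\}$ be the stabilizer of the line $\langle u_1\rangle$, let $W = \langle u_1, u_2\rangle$, and let $H' = \{g \in H : gW = W\}$; write $H''$ for the image of $H'$ in $GL(W)$ under restriction. The plan is to understand the structure of $H''$ and then choose $\gamma$ to avoid a single ``bad'' value. Two preliminary reductions will be useful. First, if $g \in G$ fixes both $\langle u_1\rangle$ and $\langle u_1+\gamma u_2\rangle$ for some $\gamma \in \FF{q}^*$, then $g(u_1+\gamma u_2) \in W$ forces $gu_2 \in W$, so $g \in H'$. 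Second, if $g \in H'$ restricts to a scalar $\lambda$ on $W$, then $\lambda^{-1}g$ lies in $C_G(u_1)\cap C_G(u_2)=1$ by hypothesis (here I use $Z \leq G$ to make sense of $\lambda^{-1}g$), whence $g = \lambda \in Z$.

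The heart of the proof is to show that $H''$ is abelian and that all its non-scalar elements share a common second eigenline. Since every element of $H''$ fixes $\langle u_1 \rangle$, $H''$ embeds into the Borel subgroup $B = T \ltimes U$ of upper-triangular matrices in $GL_2(\FF{q})$, where $T$ is the diagonal torus and $U$ the upper unipotent radical. Non-identity elements of $U$ have order equal to the characteristic $p$ of $\FF{q}$, whereas the coprime hypothesis gives $p \nmid |G|$ and hence $p\nmid |H''|$; thus $H'' \cap U = 1$. This has two consequences. First, any non-scalar element of $H''$ has distinct eigenvalues (otherwise, after dividing by its scalar part, it would give a non-identity element of $U$). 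Second, since $B/U \cong T$ is abelian, $[H'', H''] \leq H'' \cap U = 1$, so $H''$ is abelian. Therefore all its non-scalar elements are simultaneously diagonalizable, and since they all fix $\langle u_1\rangle$, they share a common second eigenline $\langle w\rangle \subseteq W$ independent of the element.

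To finish, a line of the form $\langle u_1+\gamma u_2\rangle$ with $\gamma \in \FF{q}^*$ coincides with $\langle w\rangle$ for at most one value $\gamma_0 \in \FF{q}^*$ (and for no $\gamma$ at all if $\langle w\rangle = \langle u_2\rangle$). For $q \geq 3$ I may pick any $\gamma \in \FF{q}^*\setminus\{\gamma_0\}$; for $q = 2$ the Borel $B$ coincides with $U$, forcing $H'' = 1$, and $\gamma = 1$ works trivially. With such a $\gamma$, any $g$ fixing $\langle u_1\rangle$ and $\langle u_1+\gamma u_2\rangle$ lies in $H'$ by the first reduction, and its image in $H''$ cannot be non-scalar (else $\langle u_1+\gamma u_2\rangle = \langle w\rangle$); being scalar on $W$, it lies in $Z$ by the second reduction. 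I anticipate the main technical point to be the abelianness of $H''$ via $[H'',H''] \leq H'' \cap U = 1$; once this is in hand, everything else is elementary linear-algebraic bookkeeping on the plane $W$.
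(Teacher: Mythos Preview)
Your argument is correct and reaches the same conclusion as the paper, but the organization is genuinely different. Both proofs rest on the single observation that coprimeness rules out nontrivial unipotent elements on the plane $W=\langle u_1,u_2\rangle$, yet they exploit it in distinct ways. The paper argues combinatorially: for each $\alpha\in\FF q^\times$ it defines the set
\[
X_\alpha=\{\lambda\in\FF q^\times\setminus\{1\}\mid \exists\,g\in G:\ gu_1=u_1,\ g(u_1+\alpha u_2)=\lambda(u_1+\alpha u_2)\},
\]
shows by an explicit matrix computation that $X_\alpha\cap X_\beta=\emptyset$ for $\alpha\neq\beta$ (the product $g_U^{-1}h_U$ would be a nontrivial unipotent), and then invokes pigeonhole on $\FF q^\times\setminus\{1\}$ to find $\gamma$ with $X_\gamma=\emptyset$. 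You instead argue structurally: the image $H''$ sits in the Borel $B=T\ltimes U$, the commutator $[H'',H'']$ lands in $H''\cap U=1$, so $H''$ is abelian and diagonalizable, and all its non-scalar elements share a single second eigenline $\langle w\rangle$. Your route is slightly sharper (it shows at most \emph{one} $\gamma$ can fail, not merely that some $\gamma$ succeeds) and more conceptual, while the paper's is more hands-on. One small omission: your embedding into $GL_2(\FF q)$ presumes $u_1,u_2$ are linearly independent; the paper disposes of the dependent case in a sentence at the outset, and you should as well (it is immediate from your second reduction, since then $H''\leq GL_1(\FF q)$ is already scalar).
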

%---------------
\begin{proof}
  The case when $u_1$ and $u_2$ are linearly dependent is trivial,
  since a one-element base for a linear group containing $Z$ is always
  a strong base for $G$. Now, let $u_1,\ u_2$ be linearly independent
  vectors. Then $u_1, u_1+\alpha u_2$ is also a base for $G$ for every
  $\alpha\in \FF q^\t$.  For $\alpha\in \FF q^\t$ let
  \[X_\alpha=\{\lambda\in \FF q^\t\setminus\{1\}\,|\, \exists\, g\in
  G; gu_1=u_1,\,g(u_1+\alpha u_2)=\lambda(u_1+\alpha u_2)\}.\] We
  claim that $X_\alpha\cap X_\beta=\emptyset$ if $\alpha\neq \beta$
  are non-zero field elements.  Indeed, let us assume that $\lambda\in
  X_\alpha\cap X_\beta$. Then there exist $g,h\in G$ such that
  \[
  \begin{array}{r@{\;=\;}l}
    gu_1&u_1\\
    g(u_1+\alpha u_2)&\lambda(u_1+\alpha u_2)
  \end{array}
  \quad\textrm{and}\quad
  \begin{array}{r@{\;=\;}l}
  hu_1&u_1\\
  h(u_1+\beta u_2)&\lambda(u_1+\beta u_2)
  \end{array}
  .
  \]
  Then both $g$ and $h$ fixes the two dimensional subspace $U=\langle
  u_1,u_2\rangle$. The restrictions of $g$ and $h$ to this subspace
  have the following matrix forms:
  \[
  g_U=\begin{pmatrix}
    1&(\lambda-1)/\alpha\\
    0&\lambda
  \end{pmatrix}
  \qquad\textrm{and}\qquad
  h_U=\begin{pmatrix}
    1&(\lambda-1)/\beta\\
    0&\lambda
  \end{pmatrix}.
  \]
  Thus, $g_U^{-1}h_U$ is an upper unitriangular matrix different from $1_U$,
  hence $p=o(g_U^{-1}h_U)\mid o(g^{-1}h)$ for $g^{-1}h\in G$, a contradiction.

  Using the pigeonhole principle, it follows that
  $X_{\gamma}=\emptyset$ for some $\gamma\in \FF q^\t$.  Let
  $v_1,v_2\in V$ be defined as $v_1=u_1$ and $v_2=u_1+\gamma u_2$.
  Let us assume that $g\in G$ fixes both $\langle v_1\rangle$ and
  $\langle v_2\rangle$ and let $\lambda_1,\lambda_2\in \FF q^\t$ be such
  that $gv_1=\lambda_1v_1,\ gv_2=\lambda_2v_2$. As $G$ contains every
  non-zero scalar matrix, $g_0=\lambda_1^{-1}g\in G$. Now, $g_0$ fixes
  $v_1$ and moves $v_2$ by a scalar multiple.  Using the definition of
  $\gamma$, we get $g_0v_2=v_2$. As $v_1, v_2$ is also a base for $G$,
  it follows that $g_0=1$, that is, $g$ is a scalar matrix.  Hence
  $v_1,v_2$ is a strong base for $G$.
\end{proof}
%----------------------------------------
The following lemma is probably well-known, but we have not found any reference.
%----------------------------------------
\begin{lem}\label{lem:tensor_direct}
  Let $G$ be a finite group, let $K$ be a field of characteristic
  $p>0$ and let $V$ be an absolutely irreducible $KG$-module of dimension $n$.
  If $G$ is a direct or a central product of the
  groups $L_1,L_2,\ldots, L_t$, then $V=\otimes_{i=1}^t V_i$ for some
  absolutely irreducible $KL_i$-modules $V_i$.
\end{lem}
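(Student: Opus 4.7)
The plan is to induct on $t$, reducing to the crucial case $G=L_1\cdot L_2$ (central product, with direct product as a special case). The key structural input is Clifford's theorem, which forces a very rigid description of $V|_{L_1}$ because $L_2$ centralizes $L_1$.

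By Clifford's theorem (which holds in any characteristic, since the socle is a $G$-invariant submodule of $V$), $V|_{L_1}$ is semisimple and its homogeneous decomposition is permuted by $G$. Since $L_2$ centralizes $L_1$, it acts on $V|_{L_1}$ by $KL_1$-endomorphisms and hence stabilizes every homogeneous component; together with $L_1$ itself this means all of $G=L_1L_2$ fixes each component. Irreducibility of $V$ then forces $V|_{L_1}\cong V_1^m$ for a single irreducible $KL_1$-module $V_1$. Setting $D_1=\End_{KL_1}(V_1)$ and $V_2=\mathrm{Hom}_{KL_1}(V_1,V)$ (a $KL_2$-module via the commuting $L_2$-action on $V$), the standard evaluation map is a $KG$-isomorphism $V_1\otimes_{D_1}V_2\cong V$.

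The main obstacle is to rewrite this as a tensor product over $K$, i.e.\ to show $D_1=K$. For this, I would base change to the algebraic closure $\bar K$. Since $K=\FF q$ is finite, the division algebra $D_1$ is actually a field $\FF{q^k}$ by Wedderburn's little theorem, and $\End_{\bar K L_1}(V_1\otimes_K\bar K)\cong D_1\otimes_K\bar K\cong \bar K^k$, so $V_1\otimes_K\bar K$ is isotypic as a $\bar K L_1$-module only when $k=1$. On the other hand, $V\otimes_K\bar K$ is absolutely irreducible over $\bar K G$, so the same Clifford/centralizer argument applied over $\bar K$ shows that $(V\otimes\bar K)|_{L_1}=(V_1\otimes\bar K)^m$ is isotypic; this forces $k=1$ and hence $D_1=K$. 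Therefore $V\cong V_1\otimes_K V_2$, and the inclusion $\End_{KL_1}(V_1)\otimes_K\End_{KL_2}(V_2)\sbs\End_{KG}(V)=K$ (using absolute irreducibility of $V$) yields $\End_{KL_2}(V_2)=K$ as well, so both $V_i$ are absolutely irreducible. The induction is completed by applying the $t=2$ case to the split $G=L_1\cdot(L_2\cdots L_t)$ and invoking the inductive hypothesis on the second factor.
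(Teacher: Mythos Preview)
Your argument is correct and follows essentially the same route as the paper's proof: reduce to $t=2$, apply Clifford's theorem together with the fact that $L_2$ centralizes $L_1$ to see that $V|_{L_1}$ is isotypic, and then pass to the algebraic closure to force the endomorphism ring of the $L_1$-constituent to be $K$. The only difference is packaging: the paper works in matrix coordinates (writing $X(g_1)=X_1(g_1)\otimes I_k$ and then using Schur's lemma over $K$ to identify $C_{GL(n,K)}(X(L_1))$ with $I_{n/k}\otimes GL(k,K)$), whereas you phrase the same thing via the evaluation isomorphism $V_1\otimes_{D_1}\mathrm{Hom}_{KL_1}(V_1,V)\cong V$ and then argue $D_1=K$. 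Your appeal to Wedderburn's little theorem (so that $D_1\otimes_K\bar K\cong\bar K^k$, hence isotypy of $V_1\otimes\bar K$ forces $k=1$) plays exactly the role of the paper's citation of \cite[Theorem~9.21]{Ibook}. Note that you explicitly assume $K=\FF q$ is finite; the lemma as stated allows an arbitrary field of characteristic $p$, but every application in the paper is over a finite field, so this restriction is harmless here.
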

%----------------
\begin{proof}
  Clearly we can assume that $t=2$ and $G=L_1\times L_2$ is a direct
  product.  By Clifford theory, $V$ is completely reducible as a
  $KL_1$-module, and $L_2\sbs C_G(L_1)$ permutes the homogeneous
  components of this $KL_1$-module in a transitive way.  It follows
  that $V=U_1\oplus\ldots\oplus U_k$ where $U_1,\ldots,U_k$ are
  isomorphic irreducible $KL_1$-modules.  Now, let $T>K$ be the
  algebraic closure of $K$. By extending scalars, we can make $V$ into
  an irreducible $TG$-module, denoted by $V^T$. Again, $V^T$ is a
  direct sum of some isomorphic (absolutely) irreducible
  $TL_1$-modules. On the other hand, $V^T=U_1^T\oplus\ldots\oplus
  U_k^T$, so each $U_i^T$ decomposes into the direct sum of isomorphic
  irreducible $TL_1$-modules. Comparing this result with \cite[Theorem
  9.21/a,b]{Ibook} it follows that every $U_i^T$ is itself
  irreducible as a $TL_1$-module, hence $V=U_1\oplus\ldots\oplus U_k$
  is the decomposition of $V$ into isomorphic absolutely irreducible
  $KL_1$-modules.  With respect to a suitable basis of $V$, for the
  matrix representation of $X:G\ra GL(n,K)$ and for every $g_1\in L_1$
  we have $X(g_1)=X_1(g_1)\otimes I_k$, where $X_1:L_1\ra GL(n/k,K)$
  is an absolutely irreducible representation of $L_1$. Using Schur's
  Lemma, and \cite[Theorem 9.2/c]{Ibook}, it follows that $X(L_2)\leq
  C_{GL(n,K)}(X(L_1))=\{I_{n/k}\otimes B\,|\,B\in
  GL(k,K)\}$. Therefore, $X(g_2)=I_{n/k}\otimes X_2(g_2)$ for some
  (absolutely) irreducible representation $X_2:L_2\ra GL(k,K)$ of
  $L_2$. Thus, $X=X_1\otimes X_2$ and the result follows.
\end{proof}
%-------------------------------------
In the following let $V_1$ be an $m\geq 2$ dimensional vector space over 
the field $\FF q$, and let $\{x_1,x_2,\ldots,x_m\}\sbs V_1$ be a basis of $V_1$. 
For any $k$ and $1\leq i\leq m$ let the vector space $V_1^{(k)}$ and 
$x_i^{(k)}\in V_1^{(k)}$ be defined as
\[
V_1^{(k)}:=\underbrace{V_1\otimes\cdots\otimes V_1}_{k \textrm{ factors}},
\qquad
x_i^{(k)}:=\underbrace{x_i\otimes\cdots\otimes x_i}_{k \textrm{ factors}}.
\]
With this notation we have the following.
%-------------------------------------
\begin{lem}\label{lem:tensor_x}
  Let $V_1$ be a vector space over $\FF q$ with basis
  $\{x_1,\ldots,x_m\}\sbs V_1$, and let $t\geq 2$.  Then the group
  $B=\underbrace{GL(V_1)\otimes\cdots\otimes GL(V_1)}_{t \textrm{
      factors}}$ acts in the natural way on $V=V_1^{(t)}$. Let
  $x=\sum_{i=1}^m x_i^{(t)}\in V$. Then the matrix form of $C_B(x)$
  with respect to the basis $\{u_1\otimes\cdots\otimes
  u_t\,|\,u_i\in\{x_1,\ldots,x_m\}\ \forall 1\leq i\leq t\}$ is the following.
  \begin{enumerate}
  \item
    For $t=2$ we have $C_B(x)=\{A\otimes A^{-T}\,|\,A\in GL(m,q)\}$.
  \item For $t\geq 3$ let $P\leq GL(m,q)$ be the group of permutation
    matrices and $D\leq GL(m,q)$ be the group of diagonal matrices
    with respect to the basis $\{x_1,\ldots, x_m\}$. Then
    \[
    C_B(x)=\bigg\{A_1S\otimes \cdots\otimes A_tS\,|\, S\in P,\
    A_1,\ldots,A_t\in D,\ A_1\dotsm A_t=I\bigg\}.
    \]
  \end{enumerate}
\end{lem}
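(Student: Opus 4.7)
The plan is to translate the fixedness condition into coordinates and then exploit a rank-one argument. Writing $c^{(k)}_i := A_k x_i$ for the $i$-th column of $A_k$, the identity $(A_1\otimes\cdots\otimes A_t)x = x$ is equivalent to
\[
\sum_{i=1}^m c^{(1)}_i\otimes\cdots\otimes c^{(t)}_i \;=\; \sum_{j=1}^m x_j^{\otimes t}.
\]
For $t=2$, the map $u\otimes v\mapsto uv^T$ identifies $V_1\otimes V_1$ with $M_m(\FF q)$ and converts this equation into $A_1A_2^T = I_m$, which immediately gives $C_B(x) = \{A\otimes A^{-T}\,|\,A\in GL(m,q)\}$.

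For $t\geq 3$ I would first verify, by a one-line substitution, that every element $A_1S\otimes\cdots\otimes A_tS$ with $S\in P$, each $A_k\in D$, and $A_1\cdots A_t = I$ fixes $x$, which settles the containment $\supseteq$. For the converse, I would contract the last $t-2$ tensor factors of the equation against $x_j^*$ for a fixed $j$. Because $(\delta_{j,l})^{t-2} = \delta_{j,l}$ whenever $t\geq 3$, the right-hand side collapses to $x_j\otimes x_j$, and the left-hand side, rewritten in $M_m(\FF q)$ via $u\otimes v\mapsto uv^T$, becomes the matrix equation
\[
A_1 D^{(j)} A_2^T \;=\; x_j x_j^T, \qquad D^{(j)} := \diag\Bigl(\prod_{k=3}^t (A_k)_{j,i}\Bigr)_{i=1}^m.
\]
Solving yields $D^{(j)} = (A_1^{-1}x_j)(A_2^{-1}x_j)^T$, an outer product of two nonzero vectors, so $D^{(j)}$ is a diagonal matrix of rank exactly one. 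A nonzero diagonal matrix of rank one has a single nonzero entry, which forces both $A_1^{-1}x_j$ and $A_2^{-1}x_j$ to be scalar multiples of one basis vector $x_{\sigma(j)}$.

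As $j$ varies, $\sigma$ is a permutation of $\{1,\ldots,m\}$, and $A_1^{-1}, A_2^{-1}$ are monomial matrices with associated permutation $\sigma$. The first two positions played no special role in the contraction, so applying the same argument to any pair $(1,k)$ shows that every $A_k^{-1}$ (hence every $A_k$) is monomial with the same permutation $\sigma$; thus $A_k = A'_k S$ for one common permutation matrix $S$ and diagonals $A'_k\in D$. Plugging this form back into the tensor identity reduces the remaining constraint to $A'_1\cdots A'_t = I$, completing the description of $C_B(x)$. The one genuine idea is the rank-one-diagonal observation enabled by the contraction trick; once that is in place, the rest is bookkeeping, and the contrast with $t=2$ (where no such contraction is available, leaving only $A_1A_2^T = I$) explains why the two descriptions look so different.
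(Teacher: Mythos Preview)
Your argument is correct. The $t=2$ case is handled identically in both your proposal and the paper (the identification $u\otimes v\mapsto uv^{T}$ is just a repackaging of the coefficient computation $\sum_k \alpha_{ik}\beta_{jk}=\delta_{ij}$). For $t\ge 3$ the two proofs diverge. The paper invokes \cite[Lemma~3.3(i)]{LS} to deduce that $M_1\otimes M_2$ preserves the subspace $W_2=\langle x_1^{(2)},\dots,x_m^{(2)}\rangle$, and then reads off from the coefficient of $x_k\otimes x_l$ in $(M_1\otimes M_2)(x_j^{(2)})$ that $\alpha_{kj}\beta_{lj}=0$ for $k\ne l$; this is the ``column-$j$ of $M_1$ times column-$j$ of $M_2$ is diagonal'' version of the rank-one observation. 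Your route avoids the external Liebeck--Shalev lemma entirely: the contraction against $x_j^{*}$ in the last $t-2$ slots produces the matrix identity $A_1 D^{(j)} A_2^{T}=x_jx_j^{T}$ directly, and inverting gives $D^{(j)}=(A_1^{-1}x_j)(A_2^{-1}x_j)^{T}$, a nonzero rank-one diagonal matrix. The upshot is the same monomial conclusion, but your argument is self-contained and makes the mechanism (rank-one outer product forced to be diagonal) more transparent; the paper's version, by going through the invariance of $W_2$, is closer in spirit to how the lemma is used elsewhere in the section. Both routes finish identically: symmetry over the pairs $(1,k)$ pins down a common permutation, and substituting back yields $A_1'\cdots A_t'=I$.
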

%-------------
\begin{proof} 
  For $t=2$ let $A_1=(\alpha_{ij}),\ A_2=(\beta_{ij})\in GL(m,q)$ with
  $A_1\otimes A_2\in C_B(x)$. As the coefficient of $x_i\otimes x_j$
  in $(A_1\otimes A_2)(x_k\otimes x_k)$ is $\alpha_{ik}\beta_{jk}$, it
  follows that
  \[
  \sum_{k=1}^m(A_1\otimes A_2)(x_k\otimes x_k)= \sum_{k=1}^m (x_k\otimes x_k)\iff
  \sum_{k=1}^m \alpha_{ik}\beta_{jk}=\delta_{ij}\textrm{ for all } 1\leq i,j\leq m.
  \]
  This exactly means that $A_1A_2^T=I$, which proves part 1.

  In case $t\geq 3$ let $M_1\otimes\cdots\otimes M_t\in C_B(x)$ for some
  $M_1,M_2,\ldots,M_t\in GL(m,q)$.  By using \cite[Lemma 3.3 (i)]{LS}
  for 
  \[
  W_2:=\langle x_1^{(2)},\ldots, x_m^{(2)}\rangle,\quad
  W_{t-2}:=\langle x_1^{(t-2)},\ldots,x_m^{(t-2)}\rangle
  \]
  and for the decomposition $x=\sum_{i=1}^m x_i^{(2)}\otimes
  x_i^{(t-2)}$ we get that $(M_1\otimes M_2)(x_j^{(2)})\in W_2$ for
  every $1\leq j\leq m$. In this product the coefficient of
  $x_k\otimes x_l$ is $\alpha_{kj}\beta_{lj}$ where
  $M_1=(\alpha_{ij}),\ M_2=(\beta_{ij})$. So $\alpha_{kj}\beta_{lj}=0$
  unless $k=l$. If $\alpha_{kj}\neq 0$ for some $k$ then
  $\beta_{lj}=0$ for every $l\neq k$ and $\beta_{kj}\neq 0$. Reversing
  the role of $\alpha$ and $\beta$ and applying this argument for
  every $1\leq j\leq m$, it follows that both $M_1$ and $M_2$ are
  monomial matrices with the same permutation part. Of course, all of
  this can be said for every pair of matrices $M_i,\ M_j$ with $1\leq
  i\neq j\leq t$, which proves that
  \[
  C_B(x)\sbs\bigg\{A_1S\otimes \cdots\otimes A_tS\,|\, S\in P,\
  A_1,\ldots,A_t\in D\bigg\}.
  \]
  If $M=A_1S\otimes \cdots\otimes A_tS\in C_B(x)$ for some $S\in P$
  and $A_1,\ldots,A_t\in D$, then the $i$-th entry of the main
  diagonal of $A_1\dotsm A_t\in D$ is the coefficient of $x_i^{(t)}$
  in $Mx$, which is 1.  This proves that $C_B(x)$ is included in the
  set given in part 2 of the statement.  The proof of the converse
  containment is an obvious calculation.
\end{proof}
%---------------------
\begin{rem} 
  We note that in case $t=2$ this result was essentially proved in the proof of
  \cite[Lemma 3.3 (iii)]{LS}, while in case $t\geq 3$ there is a wrong
  description of this centralizer in the proof of \cite[Lemma
  3.5]{LS}.
\end{rem}
%-------------------------------------
\begin{thm}\label{thm:tensor_power}
  Let $V_1$ be a vector space over $\FF q$ of dimension $d\geq 2$ and
  let $Z\leq G_1\leq GL(V_1)$ be any linear group such that
  $b^*(G_1)\leq 2$. For any $t\geq 2$ let $G=G_1\wr_c S_t$ be the
  central wreath product of $G_1$ by $S_t$, that is, $G$ is a split
  extension of the base group
  $B=\underbrace{G_1\otimes\cdots\otimes G_1}_{t \textrm{
      factors}}$ by $S_t$. Then $G$ acts faithfully on the tensor
  power $V=V^{(t)}$ in a natural way, so it is embedded into
  $GL(V)$. If $(d,t)\neq (2,2)$ then $b(G)\leq 2$ holds for this action.
\end{thm}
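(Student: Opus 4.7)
The plan is to construct explicit base vectors $x,y \in V$ with $C_G(x)\cap C_G(y)=1$, guided by Lemma \ref{lem:tensor_x}. Fix a strong base $u_1,u_2$ of $G_1$ on $V_1$. Since $Z \leq G_1$, by replacing $u_2$ with $u_2 + \lambda u_1$ for suitable $\lambda$ if necessary we may assume $u_1,u_2$ are linearly independent, and extend them to a basis $x_1 := u_1,\; x_2 := u_2,\; x_3,\ldots,x_d$ of $V_1$.

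For the first base vector I take $x := \sum_{i=1}^{d} x_i^{(t)}$. Since $S_t$ manifestly fixes $x$, Lemma \ref{lem:tensor_x} together with the requirement that each tensor factor lie in $G_1$ yields the following description of $C_G(x)$ for $t\geq 3$: every element has the form $g = \sigma\cdot(A_1 S \otimes \cdots \otimes A_t S)$ with $\sigma \in S_t$, $S$ a common permutation matrix in the basis $\{x_j\}$, each $A_i$ diagonal, $A_1\cdots A_t = I$, and each factor $A_i S$ lying in $G_1$; the case $t=2$ uses part (1) instead.

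For the second vector I take $y := \sum_{k=1}^{t} \alpha_k\, y_k$ with $y_k := u_1^{\otimes (k-1)} \otimes u_2 \otimes u_1^{\otimes(t-k)}$ (so $u_2$ sits in the $k$-th tensor slot) and pairwise distinct $\alpha_k \in \FF q$. Expanding $g y$: comparing which basis vectors can occur in each slot forces $S(1)=1$ and $S(2)=2$, because otherwise the basis vectors appearing in $g y_k$ involve $x_{S(1)}, x_{S(2)}$ rather than $u_1=x_1, u_2=x_2$. Each $A_i S \in G_1$ then stabilizes both $\langle u_1\rangle$ and $\langle u_2\rangle$, and the strong base property of $u_1,u_2$ forces $A_i S$ to be a scalar $\lambda_i I$; since $A_i$ is diagonal and $S$ is a permutation matrix this gives $S=I$ and $A_i = \lambda_i I$. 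The equation $gy=y$ then collapses to $\alpha_{\sigma^{-1}(l)} = \alpha_l$ for every $l$, and distinctness of the $\alpha_k$ yields $\sigma = 1$. Finally $\prod_i \lambda_i = 1$ from $A_1\cdots A_t = I$ makes $g$ act as the identity on $V$.

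The main obstacle is robustifying the choice of $y$ when $\FF q$ does not contain $t$ distinct nonzero scalars, and when $t=2$. For $d\geq 3$ the extra basis vector $x_3$ provides combinatorial flexibility: one may replace some scalar coefficients $\alpha_k$ by substituting $x_3$ for $u_1$ in distinguishing positions, breaking the permutation $\sigma$ without needing many field elements. For $t=2$, Lemma \ref{lem:tensor_x}(1) gives the larger centralizer $\{A \otimes A^{-T}\}$, and a modified $y$ such as $u_2\otimes u_1 + \beta\, x_3\otimes x_3$ (exploiting $d\geq 3$) is needed to simultaneously kill the swap and the dual symmetry $A \mapsto A^{-T}$. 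The genuine exception $(d,t)=(2,2)$ is forced because with only two basis vectors and two tensor factors neither combinatorial device is available, and the residual group $\{A\otimes A^{-T}\}\rtimes S_2$ cannot in general be trivialized by a single additional vector.
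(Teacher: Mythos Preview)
Your overall strategy---fix a strong base, take $x$ so that Lemma~\ref{lem:tensor_x} pins down $C_G(x)$, then design $y$ to kill the residual symmetry---is exactly the paper's strategy, and your analysis for $t\geq 3$ up to the point where you conclude $S=I$ and each $A_i=\lambda_i I$ is correct. The differences are in the specific choices: the paper takes $x=x_1^{(t)}+y_1^{(t)}$ (only two terms), uses \cite[Lemma~3.3(i)]{LS} to show $C_G(x)$ preserves the subspace $U=\langle x_1,y_1\rangle^{(t)}$, and applies Lemma~\ref{lem:tensor_x} with $m=2$ on $U$; you take the full basis sum $x=\sum_{i=1}^d x_i^{(t)}$ and apply Lemma~\ref{lem:tensor_x} directly with $m=d$. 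Your route is a little more direct, but the paper's choice of $y$ is what really matters.

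There is a genuine gap. Your $y=\sum_{k=1}^t \alpha_k y_k$ requires $t$ pairwise distinct scalars in $\FF q$, i.e.\ $q\geq t$. Your proposed repair (``substitute $x_3$ for $u_1$ in distinguishing positions'') explicitly needs $d\geq 3$. But the theorem must cover $d=2$, $t\geq 3$ with $q<t$ (for instance $q=3$, $d=2$, $t=4$), and there you have neither enough field elements nor an extra basis vector. The paper sidesteps this completely: for $t\geq 3$ it takes
\[
y=x_1^{(t)}+\sum_{i=1}^{t-1} x_1^{(t-i)}\otimes y_1^{(i)},
\]
which uses only coefficients $0$ and $1$ and distinguishes the tensor slots by the \emph{number} of trailing $y_1$-factors rather than by scalar labels. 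This staircase pattern forces $\sigma=1$ without any constraint on $q$, and it lives entirely inside $\langle x_1,y_1\rangle^{(t)}$, so it works uniformly for $d=2$. Your treatment of $t=2$ is also only a sketch: the paper's explicit $y=x_1\otimes z_1+y_1\otimes x_1$ (with $z_1\notin\langle x_1,y_1\rangle$, using $d\geq 3$) and the careful eigenvector argument that follows are not things you have actually carried out.
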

%--------------
\begin{proof}
  Let $x_1,y_1\in V_1$ be a strong base for $G_1$ such that $x_1,y_1$
  are linearly independent and let $U_1=\langle x_1,y_1\rangle$.  Let
  $x=x^{(t)}+y^{(t)}$. As $S_t$ acts on the tensor product by
  permuting factors, we clearly have $S_t\leq C_G(x)$, hence
  $C_G(x)=H\rtimes S_t$ for some subgroup $H$ of the base group
  $B$. Let $h_1\otimes \cdots\otimes h_t\in H$ be any element of $H$.
  (Note that $h_1,\ldots, h_t\in G_1$ are only defined up to scalars.)
  Applying \cite[Lemma 3.3(i)]{LS} for $V_1$ and $V^{(t-1)}$ it
  follows that $U_1\leq V_1$ is fixed by $h_1\in G_1$. Using that
  $S_t$ acts by conjugation on $H$ and it permutes its coordinates in
  a transitive way we get that $U_1$ is an invariant subspace of $h_i$
  for any $1\leq i\leq t$, so $U=U^{(t)}\leq V^{(t)}$ is an invariant subspace of
  $C_G(x)$.  By Lemma \ref{lem:tensor_x}, the matrix form of $H_{U}$ is
  the following.\par
  In case $t=2$ we have $H_{U}\sbs \{A\otimes A^{-T}\,|\,A\in
  GL(2,q)\}$, while in case $t\geq 3$
  \[
  H_{U}\sbs\bigg\{A_1S^\varepsilon\otimes\cdots\otimes A_tS^\varepsilon\,
  \left|\right.\,\varepsilon\in \{0,1\}, A_i=
  \begin{pmatrix}\lambda_i&0\\0&\mu_i\end{pmatrix}, 
  \prod_i\lambda_i=\prod_i\mu_i=1\bigg\},
  \]
  where $S=\begin{pmatrix}0&1\\1&0\end{pmatrix}\in GL(2,q)$.\par
  Now, we define $y\in V^{(t)}$ such that $C_G(x)\cap C_G(y)=1$.
  First, let us assume that $t=2$. Using that $\dim V_1=d\geq 3$ by
  our assumption, let $z_1\in V_1\setminus U_1$. Let $y\in V^{(2)}$ be
  defined as
  \[
  y=x_1\otimes z_1+y_1\otimes x_1.
  \]
  Let us assume that $g\in C_G(x)\cap C_G(y)$. Then $g=h\sigma$, where
  $h=h_1\otimes h_2\in H$ for some $h_1,h_2\in G_1$ and $\sigma\in
  S_2$.  Furthermore, $h$ fixes $U=U_1\otimes U_1$ and $h_{U}=A\otimes
  A^{-T}$ for some $A\in GL(2,q)$.  If $\sigma\neq 1$, then $gy\in
  V_1\otimes U_1$, while $y\notin V_1\otimes U_1$, a contradiction.
  Thus, $g=h$ and $gy\in Ax_1\otimes V_1+Ay_1\otimes
  A^{-T}x_1$. Extending $x_1\otimes z_1, y_1\otimes x_1, y_1\otimes
  z_1, y_1\otimes y_1$ to a basis of $V$ and writing $gy$ as a linear
  combination of the basis elements we get that $g(x_1\otimes z_1)$
  contains $x_1\otimes z_1$, but it does not contain $y_1\otimes
  z_1$. It follows that $x_1$ is an eigenvector of $A$.  Now, using
  that $Ay_1\otimes A^{-T}x_1$ contains $y_1\otimes x_1$ but it does
  not contain $y_1\otimes y_1$ we get that $x_1$ is an eigenvector of
  $A^{-T}$ too. It follows that $A$ is a diagonal matrix. Since
  $h_{1,U_1}=A, h_{2,U_1}=A^{-T}$ and we chose $x_1,y_1$ to be a
  strong base for $G_1$, it follows that $h_1,h_2\in Z$. Hence
  $g=h_1\otimes h_2=1$.

  For $t\geq 3$ let $y$ be defined as
  \[
  y=x^{(t)}+\sum_{i=1}^{t-1}x^{(t-i)}\otimes y^{(i)}.
  \]
  Now, if $g=h\sigma\in C_G(x)\cap C_G(y)$, where $h=h_1\otimes
  h_2\otimes\cdots\otimes h_t\in H$ for some $h_1,\ldots,h_t\in G_1$
  and $\sigma\in S_t$ then $h$ leaves invariant $U$ and
  $h_{U}=A_1S^\varepsilon\otimes\cdots\otimes A_tS^\varepsilon$.  Now,
  if $\varepsilon=1$, then $y$ contains $x^{(t)}$ with non-zero
  coefficient, while $gy$ does not, a contradiction. It follows that
  $h_{U}=A_1\otimes \cdots\otimes A_t$.

  As $h_{i,U_1}=A_i$ is a diagonal matrix with respect to the basis
  $x_1,y_1\in U_1$ and $x_1,y_1$ is a strong base for $G_1\leq
  GL(V_1)$ we get that $h_i\in Z$ for every $1\leq i\leq t$. It follows
  that $h=1$ and $g=\sigma\in S_t$. If $\sigma(i)=j$ for some $i<j$,
  then $y$ contains $x^{(i)}\otimes y^{(t-i)}$ with non-zero
  coefficient, while $gy$ does not, which proves that $g=1$, as
  claimed.
\end{proof}
%-------------------------------------
\begin{rem}
  The assumption $(d,t)\neq (2,2)$ is crucial in the previous theorem.
  By taking $V_1\simeq \FF 3^2$ and $G_1=SL(V_1)\geq Z$ we get
  $b^*(G_1)=b(G_1)=2$, while the base size of $G=(G_1\otimes
  G_1)\rtimes S_2$ acting on $V_1\otimes V_1$ is $3$.
\end{rem}
%-------------------------------------
\begin{thm}\label{thm:tensor_main}
  Let $V_1$ be a vector space over $\FF q$ of dimension $d\geq 2$ and
  let $Z\leq G_1\leq GL(V_1)$ be a coprime linear group with
  $b(G_1)\leq 2$.  Let $G=G_1\wr_c S_t$ be the central wreath product
  of $G_1$ by $S_t$ acting on $V=V_1^{(t)}$. Then $b(G)\leq 2$ for this
  action.
\end{thm}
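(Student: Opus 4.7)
The plan is to reduce to Theorem \ref{thm:tensor_power}, whose hypotheses are stronger in two respects: they require a strong base of size $2$ for $G_1$, and they exclude the pair $(d,t)=(2,2)$. Lemma \ref{lem:strong} takes care of the first upgrade: since $G_1$ is coprime and contains $Z$ with $b(G_1)\le 2$, the lemma produces a strong base of size two, so $b^*(G_1)\le 2$, and we may take $x_1,y_1\in V_1$ linearly independent forming a strong base. Hence for every pair $(d,t)\ne (2,2)$, Theorem \ref{thm:tensor_power} applies verbatim and yields $b(G)\le 2$.

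The exceptional case $(d,t)=(2,2)$ must be handled separately. The Remark following Theorem \ref{thm:tensor_power} shows this restriction is essential in general, but the counterexample exhibited there, $G_1=SL(V_1)$ on $V_1\simeq \FF 3^2$, has $3\mid |G_1|=24$ and so fails the coprime condition; our hypothesis excludes it. Far more usefully, coprimeness forces every element of $G_1$ to be semisimple: any unipotent element of $GL_2(q)$ has order $p=\kar\FF q$, which cannot divide $|G_1|$, so every $A\in G_1$ is diagonalizable over $\overline{\FF q}$. This semisimplicity is what rigidifies the centralizer calculations.

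For the explicit construction, I would take $x=x_1\otimes x_1+y_1\otimes y_1$ (as in the proof of Theorem \ref{thm:tensor_power} with $t=2$); by Lemma \ref{lem:tensor_x}(1), $C_G(x)=H\rtimes S_2$ with $H\sbs\{A\otimes A^{-T}:A,A^{-T}\in G_1\}$. A natural candidate for the second vector is $y=x_1\otimes x_1+\alpha\,y_1\otimes y_1+\beta\,x_1\otimes y_1$ with $\alpha\in\FF q^\t\setminus\{1\}$ and $\beta\in\FF q^\t$, chosen so that $\sigma\cdot y\ne y$ for the transposition $\sigma\in S_2$. Writing $Y$ for the matrix of $y$ in the basis $x_1,y_1$, the condition ``$hy=y$'' for $h=A\otimes A^{-T}\in H$ translates to $A\in C_{GL(V_1)}(Y)\cap G_1$; semisimplicity of $A$ together with the strong base property for $x_1,y_1$ should collapse this intersection to $Z$, forcing $h=1$. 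The twisted condition ``$h\sigma\cdot y=y$'' translates to the matrix equation $AY^T=YA$ with $A\in G_1$, and for a generic choice of $(\alpha,\beta)$ this should admit no non-trivial solution in $G_1$.

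The main obstacle is the twisted equation: for some groups $G_1$ (notably normalizers of non-split tori in certain characteristics) naive candidates $y$ leave a $1$-parameter family of $G_1$-solutions, so $y$ must be perturbed further (for instance by adding a term $\gamma\,y_1\otimes x_1$ with $\gamma\ne\beta$) or the approach modified. That a workable $y$ always exists follows in principle from Dickson's classification of $q'$-subgroups of $PGL_2(q)$, which restricts $G_1/Z$ to cyclic, dihedral, or one of $A_4$, $S_4$, $A_5$; this list is short enough to make any residual case analysis finite, though finding a single uniform construction of $y$ that bypasses the case split is the delicate step.
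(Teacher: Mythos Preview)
Your reduction for $(d,t)\neq(2,2)$ via Lemma~\ref{lem:strong} and Theorem~\ref{thm:tensor_power} is exactly the paper's first step.

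The gap is entirely in the case $(d,t)=(2,2)$, where what you have is an outline, not a proof. Two concrete problems. First, the assertion that the strong base property for $x_1,y_1$ ``should collapse'' $C_{GL(V_1)}(Y)\cap G_1$ to $Z$ is unjustified: the eigenlines of your $Y$ are $\langle x_1\rangle$ and $\langle \beta x_1+(\alpha-1)y_1\rangle$, not $\langle x_1\rangle$ and $\langle y_1\rangle$, so the strong base hypothesis does not apply without a further choice of $(\alpha,\beta)$ that you have not made. Second, and as you yourself concede, the twisted equation $AY^T=YA$ has a two-parameter family of solutions in $GL_2(q)$, and you have not shown how to keep them out of $G_1$; invoking Dickson's list is a plan, not an argument.

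The paper sidesteps all of this by declining to exhibit a single working pair. After disposing of even $q$ via \cite[Lemma~3.3(ii)]{LS}, it fixes a generator $\alpha$ of $\FF q^\t$, sets $u_i=x_i\otimes x_i$ and $v_i=x_i\otimes x_{3-i}+\alpha\,x_{3-i}\otimes x_i$ for $i\in\{1,2\}$, and computes that any nontrivial element of $C_G(u_i)\cap C_G(v_i)$ must be $(a_i\otimes a_i^{-1})\sigma$ with $a_1=\left(\begin{smallmatrix}1&c_1\\0&\alpha\end{smallmatrix}\right)$ and $a_2=\left(\begin{smallmatrix}1&0\\c_2&\alpha\end{smallmatrix}\right)$ lying in $G_1$. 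If \emph{both} candidate pairs fail to be bases, then $a_1,a_2\in G_1$, and a short count gives $|\langle a_1,a_2,Z\rangle|\ge(q-1)^3$; coprimeness then forces this subgroup to be the full $q'$-Hall subgroup of $GL(2,q)$, hence of index~$q$, so $PSL(2,q)$ acquires a subgroup of index~$q$---impossible unless $q\in\{3,5,7,9,11\}$, and those five values are finished by GAP. The idea you are missing is to play \emph{two} candidate bases off against each other and convert their simultaneous failure into a lower bound on $|G_1|$ that contradicts coprimeness, rather than chasing a single $y$ through a case analysis.
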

%-------------------------------------
\begin{proof}
  By Lemma \ref{lem:strong}, we have $b^*(G_1)\leq 2$. Therefore, we can
  apply Theorem \ref{thm:tensor_power} to conclude that $b(G)\leq 2$ unless
  $(d,t)=(2,2)$.\par
  In case $(d,t)=(2,2)$ let $\{x_1,x_2\}\sbs V_1$ be a strong base of
  $V_1$, let $\sigma\in S_2,\ \sigma\neq 1$ and let $\alpha\in\FF
  q^\t$ be a generator of the multiplicative group of $\FF q$. The
  case $2|q$ is trivial by \cite[Lemma 3.3 (ii)]{LS}, so we may
  assume that $q$ is odd. We assume by way of contradiction that
  there is no $u,v\in V$ such that $C_G(u)\cap C_G(v)=1$.  Let
  $u_i=x_i\otimes x_i,\ v_i=x_i\otimes x_{3-i}+\alpha x_{3-i}\otimes
  x_i$ for $i\in\{1,2\}$. An easy calculation shows that there exist
  $c_1,c_2\in \FF q^\t$ such that
  \begin{align*}
  1\neq g_1\in C_G(u_1)\cap C_G(v_1)&\iff g_1=(a_1\otimes a_1^{-1})\sigma 
  \textrm{ where } a_1=\begin{pmatrix}1&c_1\\0&\alpha\end{pmatrix},\\
  1\neq g_2\in C_G(u_2)\cap C_G(v_2)&\iff g_2=(a_2\otimes a_2^{-1})\sigma 
  \textrm{ where } a_2=\begin{pmatrix}1&0\\c_2&\alpha\end{pmatrix}.\\
  \end{align*}
  Now, $H=\langle a_1,a_2, Z\rangle \leq G_1$. Defining $H_1=\langle
  a_1,Z\rangle$ and $H_2=\langle a_2\rangle$ we get $|H_1|=(q-1)^2,\
  |H_2|=q-1$ and $H_1\cap H_2=1$. Therefore, $|H|\geq
  |H_1H_2|=|H_1||H_2|=(q-1)^3$. As $(|H|,q)=1$, and $|H|\mid
  |GL(2,q)|=q(q+1)(q-1)^2$, it follows that $H$ is a $q'$-Hall
  subgroup of $GL(V_1)$, in particular $|GL(V_1):H|=q$.  This results
  that $PSL(2,q)$ contains a subgroup of index $q$ which is impossible
  by \cite[Table 5.2.A, p. 175.]{Kleidman} unless $q\in
  \{3,5,7,9,11\}$. The remaining cases were checked using GAP \cite{gap}.
\end{proof}
%%%%%%%%%%%%%%%%%%%%%%%%%%%%%%%%%%%%%%%%%%%%%%%%%%%%
\section{Almost quasisimple groups}\label{sec:quasisimple} 
In this section we suppose that $p$ is a prime, $V$ is an
$n$-dimensional vector space over $\FF p$ and $G\leq GL(V)$ is a
$p'$-group having a quasisimple irreducible normal subgroup $N$.  In
order to prove the existence of a base of size two for such a group,
we use a result of C. K\"ohler and H. Pahlings \cite{kopa}.  By using
D. P. M. Goodwin's theorem \cite{goodwin}, they proved that there
exists a regular orbit for such a group in most cases and they gave a
description on the exceptions (see Table \ref{tab:regorb}) with the
isomorphism type of a stabilizer of a vector having smallest order.
Using this result, for these linear groups we choose an $x\in V$ such
that $|C_G(x)|$ is as small as possible and we prove the existence of
a vector $y\in V$ such that $C_G(x)\cap C_G(y)=1$.
%------------------------------------------------------------
\begin{thm}[D. P. M. Goodwin \cite{goodwin}, C. K\"ohler and 
    H. Pahlings \cite{kopa} ]\label{thm:ghp} 
  Suppose $p$ is a prime,
  $V$ is an $n$-dimensional vector space over $\FF p$ and $G\leq
  GL(V)$ is a $p'$-group having a quasisimple normal subgroup $N$
  which is irreducible on $V$.  If $G$ has no regular orbit on the
  vectors of $V$, then one of the following holds:
  \begin{enumerate}
  \item $N =A_c$, the alternating group of degree $c$ for $c<p$ and $V$
    is the deleted permutation module for $N$ of dimension $c-1$.
  \item $G, n, p$ are as in Table \ref{tab:regorb}
  \begin{table}[ht]
  \begin{center}
  \begin{tabular}{@{}llll@{}}
  \toprule
  $G$ &$n$ &$p$ & minimal stabilizer\\
  \midrule
  $A_5\times Z$&$3$ &$11$ & $C_2$ \\
  $A_5.2\times Z$&$4$ &$7$ & $C_2$ \\
 % $2.A_5\star Z$&$2$ &$29,41,61$ & $C_2, C_2, C_2$ \\
 % $2.A_5\star Z$&$2$ &$11, 19,31$ & $C_5, C_3, C_3$ \\
  $2.A_5\star Z$& $2$ &$29,41,61,11,19,31$ & $C_2, C_2, C_2,C_5, C_3, C_3$\\
  $Z.(8\star 2.A_5).2$&$4$ &$7$ & $V_4$ \\
  \hline
  $A_6.2\times Z$&$5$ &$7$ & $C_2$ \\
  $2.A_6.2\star Z$&$4$ &$7$ & $C_3$ \\
  $3.A_6\star Z$&$3$ &$19,31$ & $C_2, C_2$ \\
  \hline
  $2.A_7\star Z$&$4$ &$11$ & $C_3$ \\
  \hline
  $L_2(7)\times Z$&$3$ &$11$ & $C_2$ \\
  $Z.(6\times L_2(7)).2$&$6$ &$5$ & $C_2$ \\
  \hline
  $U_3(3)\times Z$&$7$ &$5$ & $C_2$ \\
  $U_3(3).2\times Z$&$7$ &$5$ & $C_2$ \\
  $(U_3(3)\times Z).2$&$6$ &$5$ & $S_3$ \\
  \hline
  $U_4(2)\times Z$&$5$ &$7,13,19$ & $S_4,V_4,C_2$ \\
  $U_4(2).2\times Z$&$6$ &$7,11,13$ & $D_{12},V_4,C_2$ \\
%  $2.U_4(2)\star Z$&$4$ &$7,13,19$ & $U_{72},U_{18}, (C_3^2, C_9)$ \\
%  $2.U_4(2)\star Z$&$4$ &$31,37$ & $C_3,C_2$ \\
  $2.U_4(2)\star Z$&$4$ &$7,13,19,31,37$& $U_{72},U_{18},(C_3^2, C_9),C_3,C_2$ \\
  \hline
  $6_1.U_4(3).2_2\star Z$&$6$ &$13,19,31,37$ & $W(B_3),S_3\times C_2,V_4,C_2$ \\
  \hline
  $U_5(2)\times Z$&$10$ &$7$ & $V_4$ \\
  \hline
  $Sp_6(2)\times Z$&$7$ &$11,13,17,19$ & $C_2^3,V_4, C_2, C_2$ \\
  \hline
  $2.\Omega_8^+(2)\star Z$&$8$ &$11,13,17,19,23$ & $W(B_3), 
  S_4, S_3, V_4, C_2$ \\
  \hline
  $2.J_2\star Z$&$6$ &$11$ & $S_3$ \\
  \bottomrule
  \end{tabular}
  \caption{Coprime linear groups of quasisimple type with no regular
      orbit\label{tab:regorb}}
  \end{center}
  \end{table}
  
  In this table in the column headed by $G$ always the largest
  possible group is listed and $Z\simeq (\FF p)^\times$. In the column
  headed by ``minimal stabilizer'' the isomorphism type of a
  stabilizer of a vector having smallest order is displayed. It is
  always unique except for $2.U_4(2)\star Z$ in dimension $4$ for
  $p=19$, where there are two such types. $U_{72}, U_{18}$ are certain
  subgroups of $U_4(2)$ of order $72$ and $18$ respectively.
  \end{enumerate}
\end{thm}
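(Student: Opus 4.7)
The plan is to combine the classification of finite simple groups with an explicit fixed-point-ratio counting argument, following the strategy of Goodwin \cite{goodwin} and K\"ohler--Pahlings \cite{kopa}. The starting point is the elementary observation that $G$ has a regular orbit on $V$ provided
\[
\sum_{1 \neq g \in G} |C_V(g)| < |V|,
\]
equivalently $\sum_{g \neq 1} p^{\dim C_V(g) - \dim V} < 1$. Hence it suffices to bound $\dim C_V(g)$ uniformly for $1 \neq g \in G$ and to combine this with a lower bound on $n = \dim V$.

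First I would use the hypothesis on $N$: since $N$ is quasisimple, normal, and irreducible on $V$, we have $N/Z(N) \leq G/(G \cap Z) \leq \aut(N/Z(N))$, so the triple $(G, V, p)$ is essentially determined by the pair $(N, V)$ with $N$ a quasisimple $p'$-group and $V$ an absolutely irreducible faithful $\FF_p N$-module (after a possible field extension within $GL(V)$). Using CFSG, the candidate $N$ falls into the families alternating, classical Lie type, exceptional Lie type, sporadic. For each family I would invoke the well-known lower bounds on the minimal degrees of faithful cross-characteristic representations (Landazuri--Seitz and subsequent refinements for Lie type, Atlas data for sporadic, the classical bounds for $A_c$) to get a lower bound on $n$ in terms of $|N|$.

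Next I would bound fixed-space dimensions. Since $G$ acts coprimely, every non-identity $g \in G$ is semisimple on $V$, and an eigenspace bound of the form $\dim C_V(g) \le \alpha n$ with $\alpha < 1$ follows either from Liebeck--Saxl type fixed-point-ratio estimates or, in the quasisimple Lie type case, from the description of centralizers via reductive subgroups. Inserting this into the counting inequality gives a geometric-series bound of the shape $|G| \cdot p^{\alpha n} < p^n$, which is satisfied as soon as $n$ is sufficiently large compared with $\log_p|G|$. This closes the argument outside a finite list of exceptions. The case $N = A_c$ on the deleted permutation module must be singled out, because under $c < p$ the prime $p$ is coprime to $|A_c|$, but a direct inspection (a transposition fixes a codimension-one subspace of the permutation module, and this forces the union $\bigcup_g C_V(g)$ to cover all of $V$) shows that no regular orbit exists, which accounts for part~(1) of the conclusion.

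The main obstacle, and the bulk of the actual work, is the finite but nontrivial enumeration of the remaining low-dimensional exceptions. For each quasisimple $N$ of small order, one lists every faithful irreducible $\FF_p N$-module whose dimension is below the threshold where the counting bound closes; for each resulting triple $(G, n, p)$ one must compute the orbits of $G$ on $V$ explicitly, decide whether a regular orbit exists, and if not record the isomorphism type of a smallest point stabilizer. This is where GAP \cite{gap} is indispensable: build $V$ as an $\FF_p G$-module, enumerate orbit representatives, and compile Table~\ref{tab:regorb}. The delicate part is to make sure no candidate module has been overlooked in the sweep across CFSG, particularly for the groups $2.A_n$, $U_4(2)$, $U_4(3)$, and $2.\Omega_8^+(2)$, where several small faithful modules coexist in cross characteristic. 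Once that bookkeeping is done, verifying irregularity in each surviving case is a routine finite computation.
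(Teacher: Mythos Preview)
The paper does not prove this theorem at all: it is quoted as a result from the literature (Goodwin \cite{goodwin} and K\"ohler--Pahlings \cite{kopa}) and then used as a black box in the proof of Theorem~\ref{thm:quasisimple_main}. So there is no ``paper's own proof'' to compare your proposal against.

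That said, your sketch is a fair high-level summary of the strategy actually used in those references: the counting inequality $\sum_{g\neq 1}|C_V(g)|<|V|$, the CFSG case division, Landazuri--Seitz type lower bounds on cross-characteristic representation degrees, eigenspace bounds for semisimple elements, and a residual computer search for the finitely many small cases. Two small caveats. First, your explanation of why case~(1) arises is slightly off: a transposition lies in $S_c\setminus A_c$, so the covering of $V$ by fixed spaces in the deleted permutation module case comes from the $3$-cycles (or, more carefully, from the full collection of small-support elements), not from transpositions. Second, you should be aware that producing the \emph{minimal stabilizer} column is more than deciding regularity: it requires computing all orbit lengths, which is genuinely the content of \cite{kopa} beyond \cite{goodwin}. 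None of this affects the correctness of your outline, but since the present paper simply imports the result, a proof here would amount to reproducing a substantial portion of those two papers rather than filling a gap in this one.
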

%---------------------------------------------------------
For $g\in G$, let $C_V (g)\leq V$ denote the fixed point space of
$g$. We can naturally define $C_V(H)=\cap_{h\in H}C_V(h)$ for each
$H\subset G$.  By choosing an $x\in V$ such that $C_G(x)$ is
isomorphic to the minimal stabilizer, we prove that if $H_1,
H_2,\dots, H_t$ are the minimal subgroups of $C_G(x)$ then
$\cup_iC_V(H_i)\neq V$. Consequently, for any $y\in
V\setminus\cup_iC_V(H_i)$ the pair $x,y$ is a base of size 2. To
complete this argument we need the following lemma.
%--------------------------------------------
\begin{lem}\label{lem:qscent}
  Let us choose $x\in V$ such that $C_G(x)$ is isomorphic to the minimal
  stabilizer. If one of the following holds, then there exists $y\in V$
  such that $C_G(x)\cap C_G(y) = 1$.
  \begin{enumerate}
  \item $C_G(x)$ has less than $p+1$ minimal subgroups.
  \item More generally, $C_G(x)$ has $r+t$ minimal subgroups
    $H_1,\ldots, H_{r+t}$ with $\dim C_V(H_i)\leq n-2$ for $r+1\leq i\leq r+t$ 
    and with $rp-r+t+1< p^2$.
  \end{enumerate} 
\end{lem}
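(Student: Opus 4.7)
The key observation is that $C_G(x)\cap C_G(y)=1$ is equivalent to saying that no nontrivial element of $C_G(x)$ fixes $y$, i.e.\ $y\notin \bigcup_{1\ne g\in C_G(x)} C_V(g)$. I would first reduce this union to a finite one indexed by the minimal subgroups of $C_G(x)$: for any $1\ne g\in C_G(x)$, picking an element of prime order in $\langle g\rangle$ produces a minimal subgroup $H_i\le \langle g\rangle$, and since any vector fixed by $g$ is fixed by every power of $g$, we have $C_V(g)\sbs C_V(H_i)$. Conversely, $C_V(H_i)\sbs C_V(h)$ for each $1\ne h\in H_i$. This gives
\[
\bigcup_{1\ne g\in C_G(x)} C_V(g)\;=\;\bigcup_{i=1}^{r+t} C_V(H_i),
\]
so it suffices to show the right-hand union is a proper subset of $V$.

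Because $G$ acts faithfully on $V$ and each $H_i\ne 1$, every $C_V(H_i)$ is a proper subspace of $V$, hence $|C_V(H_i)|\le p^{n-1}$; by hypothesis $|C_V(H_i)|\le p^{n-2}$ for $r+1\le i\le r+t$ in part (2). Since all these subspaces contain $0$, subadditivity on the nonzero parts gives
\[
\Bigl|\bigcup_i C_V(H_i)\Bigr|\;\le\;1+r(p^{n-1}-1)+t(p^{n-2}-1),
\]
with $t=0$ in part (1). For part (1), $r\le p$ and the bound is at most $1+p(p^{n-1}-1)=p^n-(p-1)<p^n$, so any $y$ outside the union works. (Equivalently, one may invoke the classical fact that $V$ is not a union of fewer than $p+1$ proper subspaces.)

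For part (2), I would rewrite the bound as $p^{n-2}(rp+t)+(1-r-t)$ and verify, using $n\ge 2$ together with the hypothesis $rp-r+t+1<p^2$, that this is strictly less than $p^n=|V|$. The cleanest instance is $n=2$, where the $t$ subspaces are $\{0\}$, the bound collapses to $1+r(p-1)=rp-r+1$, and the hypothesis is exactly the needed strict inequality; for $n\ge 3$ the additional factor $p^{n-2}$ gives further slack.

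The main obstacle is the numerical check in part (2) in borderline configurations where the plain subadditive bound is nearly tight: for small $n$ and large $r+t$ one may need to sharpen it, either by an inclusion-exclusion argument exploiting that distinct hyperplanes share a subspace of codimension $2$, or by reducing to the classical subspace-covering theorem recalled above. Once $|\bigcup_i C_V(H_i)|<|V|$ is established, any $y$ in the complement of this union yields a pair $(x,y)$ with $C_G(x)\cap C_G(y)=1$.
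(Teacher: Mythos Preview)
Your reduction to the union $\bigcup_i C_V(H_i)$ and your treatment of part~(1) match the paper exactly. The gap is in part~(2): your subadditive bound
\[
\Bigl|\bigcup_i C_V(H_i)\Bigr|\le 1+r(p^{n-1}-1)+t(p^{n-2}-1)=p^{n-2}(rp+t)+(1-r-t)
\]
does \emph{not} follow from the hypothesis $rp-r+t+1<p^2$ in general, and your claim that ``for $n\ge 3$ the additional factor $p^{n-2}$ gives further slack'' is backwards. Rewriting, you need $p^{n-2}(rp+t-p^2)<r+t-1$; when $rp+t>p^2$ the left side grows with $n$, not shrinks. Concretely, take $p=7$, $r=7$, $t=5$, $n=3$: the hypothesis gives $49-7+5+1=48<49$, but your bound is $1+7\cdot 48+5\cdot 6=367>343=7^3$.

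The paper closes this gap with exactly the sharpening you gesture at but do not carry out: since any two distinct hyperplanes meet in codimension~$2$, adding each new proper subspace to the union contributes at most $p^{n-1}-p^{n-2}$ (or less if it is not a hyperplane), giving
\[
\Bigl|\bigcup_{i=1}^{r+t} C_V(H_i)\Bigr|\le rp^{n-1}-(r-1)p^{n-2}+tp^{n-2}=p^{n-2}(rp-r+t+1),
\]
and now the hypothesis immediately yields $<p^n$. So the missing step is precisely the one-line inductive estimate on the union of $r$ proper subspaces, not merely the observation that they share the origin.
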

%------------------
\begin{proof}
  \begin{enumerate}
  \item Let $H_1, H_2,\dots, H_r$ be the minimal subgroups of $C_G(x)$ where
    $r<p+1$. Since the vector space $V$ over the field of $p$ elements
    cannot be covered with less than $p+1$ proper subspaces,
    we have $\cup_iC_V(H_i)\neq V$ so $C_G(x)\cap C_G(y)=1$ holds
    for any $y\in V\setminus \cup_iC_V(H_i)$.
  \item 
    Using the assumptions we have 
    \begin{multline*}
    \!\!\left|\cup_{i=1}^{r+t}C_V(H_i)\right|\leq
    rp^{n-1}-(r-1)p^{n-2}+tp^{n-2}
    =p^{n-2}(rp-r+t+1)< p^n.
    \end{multline*}
    Therefore, $\cup_{i=1}^{r+t}C_V(H_i)\neq V$ and $C_G(x)\cap C_G(y)=1$ holds
    for any element $y\in V\setminus \cup_iC_V(H_i)$.
  \end{enumerate}
  
\end{proof}
%--------------------------------------------
Now, we can prove the following
\begin{thm}\label{thm:quasisimple_main}
  Suppose $p$ is a prime, $V$ is an $n$-dimensional vector space over
  $\FF p$ and $G\leq GL(V)$ is a $p'$-group having a quasisimple
  normal subgroup $N$ which is irreducible on $V$. Then there exist
  $x,y\in V$ such that $C_G(x)\cap C_G(y)=1$.
\end {thm}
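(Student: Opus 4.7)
The proof will split according to Theorem \ref{thm:ghp}. If $G$ has a regular orbit on $V$, then any $x$ in such an orbit satisfies $C_G(x)=1$, and taking $y$ to be any nonzero vector suffices. Hence I may assume $G$ has no regular orbit, and so one of the two cases of Theorem \ref{thm:ghp} applies.

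For the deleted permutation module case ($N=A_c$ with $c\geq 5$, $c<p$, and $V$ the deleted permutation module), the normalizer of $A_c$ inside $GL(V)$ embeds modulo scalars into $S_c$, so every $g\in G$ has the form $\lambda\sigma$ for a scalar $\lambda\in\FF p^\times$ and a coordinate permutation $\sigma\in S_c$. My plan is to choose $x=(a_1,\ldots,a_c)$ and $y=(b_1,\ldots,b_c)$ in $V$ such that (i) the pairs $(a_i,b_i)\in\FF p^2$ are pairwise distinct and (ii) the set $\{(a_i,b_i)\}$ is not preserved by any nontrivial diagonal scaling on $\FF p^2$. Since $c<p$ and $|\FF p^2|=p^2$, there is ample room to satisfy (i), (ii) together with the zero-sum constraints $\sum a_i=\sum b_i=0$. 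Any $\lambda\sigma$ fixing both $x$ and $y$ must then permute the pairs $\{(a_i,b_i)\}$ according to multiplication by $\lambda^{-1}$, which by (i), (ii) forces $\lambda=1$ and $\sigma=1$.

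For the sporadic cases in Table \ref{tab:regorb}, pick $x\in V$ with $C_G(x)$ isomorphic to the minimal stabilizer $M$ listed there, and apply Lemma \ref{lem:qscent}. Part (1) of the lemma disposes of every row in which $M$ has at most $p$ minimal subgroups, which already covers essentially all rows with $M\in\{C_2,C_3,C_5,V_4,S_3\}$, since in each such row the corresponding $p$ is comfortably large. For the remaining rows, whose minimal stabilizers are $S_4$, $D_{12}$, $U_{72}$, $U_{18}$, $W(B_3)$, and the like, I would apply Lemma \ref{lem:qscent}(2): enumerate the minimal subgroups $H_1,\ldots,H_{r+t}$ of $M$, separate out the $r$ of them whose fixed space $C_V(H_i)$ has dimension $n-1$, and verify the inequality $rp-r+t+1<p^2$.

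The main obstacle is the case-by-case analysis for the larger minimal stabilizers, since the fixed-space dimension of each minimal subgroup of $M$ depends on the specific embedding of $M$ into $GL(V)$. The most expeditious route is to read off the action from the representation data underlying Table \ref{tab:regorb} and compute the relevant fixed-space dimensions in GAP. Once $r$ and $t$ are tabulated for each remaining row, the inequality in Lemma \ref{lem:qscent}(2) becomes a routine arithmetic check, and since the orders of the $M$ are uniformly bounded while the associated $p$ are comparatively large, the bound is expected to hold in every case.
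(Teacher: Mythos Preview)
Your overall strategy matches the paper's: split via Theorem~\ref{thm:ghp}, treat the deleted permutation module separately, and attack the tabulated exceptions through Lemma~\ref{lem:qscent}. Two points need attention.

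For the $A_c$ case, your assertion that every $g\in G$ has the form $\lambda\sigma$ with $\sigma\in S_c$ amounts to $N_{GL(V)}(A_c)=ZS_c$. This is immediate for $c\neq 6$ since $\aut(A_c)=S_c$, but for $c=6$ one has $|\out(A_6)|=4$ and an extra argument is needed to show that the non-$S_6$ outer automorphisms are not realised inside $GL(V)$; the paper does this by comparing traces on the two $A_6$-classes of elements of order~$3$. Your explicit pair construction is a legitimate alternative to the paper's route, which instead chooses $x$ with $C_G(x)\cap N=1$, deduces that $C_G(x)$ is abelian, and then appeals to the fact that a coprime abelian linear group always has a regular orbit.

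For the tabulated exceptions your closing heuristic is backwards: the five rows that survive Lemma~\ref{lem:qscent}(1) are difficult precisely because $p$ is \emph{small} ($p\in\{7,11,13\}$) while the minimal stabiliser has many minimal subgroups, not because $p$ is large. The paper does not settle them by a bare application of part~(2) to $V$. For $C_G(x)\in\{S_4,D_{12},W(B_3)\}$ it first passes to a low-dimensional faithful (or nearly faithful) irreducible $\FF p\,C_G(x)$-submodule $U\leq V$ located via the character table, determines the fixed-space dimensions inside $U$, and only then invokes the inequality (in $U$, not in $V$). For the row $2.U_4(2)\star Z$, $n=4$, $p=7$, $C_G(x)\cong U_{72}$, the paper abandons Lemma~\ref{lem:qscent} entirely and instead proves, by a direct count using the Brauer character of $2.U_4(2)$, that some $2$-dimensional subspace of $V$ has trivial pointwise stabiliser in the whole of $G$. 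So simply plugging numbers into $rp-r+t+1<p^2$ for the full module is not guaranteed to finish every case; you should anticipate these extra manoeuvres.
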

%---------------------
\begin{proof}
  We only have to deal with the cases where there is no regular
  orbit. By using Theorem \ref{thm:ghp}, first assume that $N =A_c$,
  the alternating group of degree $c$ for $c<p$ such that $V$ is the
  deleted permutation module for $N$. Let $W$ be the natural
  permutation module of $A_c$ over $\FF p$. Then $W\simeq V\oplus U$,
  where $U$ is the trivial module of $N$.

  Let $e_1, e_2, \dots, e_{c}\in W$ be a basis of $W$ permuted by $N$
  in the usual way and let $x\in V$ be the image of the vector $e_1 +
  2e_2 + 3e_3+\dots +ce_{c}$ by the projection onto $V$ along
  $U$. Obviously, $C_G(x)\cap N=1$. Let $K=C_G(x)\cap NZ(G)$.  Then
  $C_G(x)/K$ is isomorphic to a subgroup of $G/NZ(G)$.  We claim that
  $|G:NZ(G)|\leq 2$. As $V$ is an absolutely irreducible $\FF p
  N$-module, it follows that $C_G(N)=Z(G)$, hence $G/NZ(G)$ is
  embedded in $\out(A_c)$. This group is isomorphic to $C_2$, unless
  $c=6$.  For $c=6$ our claim follows from the observation that for
  any $g\in G\leq GL(V)$ the conjugation by $g$ on $N$ preserves the
  trace, so it fixes both conjugacy classes of elements of $N\simeq
  A_6$ of order $3$ (since they have trace $2$ and $-1$,
  respectively). However, every automorphism in $\aut(A_6)\setminus
  S_6$ moves one of these classes to the other, so $G/NZ(G)$ is
  embedded into $S_6/A_6\simeq C_2$, as claimed.  Thus, $C_G(x)/K\leq
  C_2$ holds for any $c$.

  On the other hand, using that $K\cap N=1$, it follows that $K$ is
  isomorphic to a subgroup of $Z(G)\leq \FF p^\times$ and $C_G(x)$
  acts trivially on $K$.  Hence $C_G(x)\geq K\geq 1$ is a central
  chain of $C_G(x)$ and $C_G(x)$ is Abelian. As any coprime Abelian
  linear group has a regular orbit, we get $G$ has a base of size two.

  Now, we investigate the groups listed in part \textit{2} of Theorem
  \ref{thm:ghp}.  Let us choose $x\in V$ such that $C_G(x)$ is
  isomorphic to the minimal stabilizer that can be seen in Table
  \ref{tab:regorb}.  Using part \textit{1} of Lemma \ref{lem:qscent}
  we can prove the existence of a suitable $y\in V$ unless $G,\ n$ and
  $p$ are one of the following

  \begin{center}
    \begin{tabular}{@{}lllll@{}}
      \toprule
      &$G$ &$n$ &$p$ & minimal stabilizer\\
      \midrule
      1&$U_4(2)\times Z$&$5$ &$7$ & $S_4$ \\
      2&$U_4(2).2\times Z$&$6$ &$7$ & $D_{12}$ \\
      3&$2.U_4(2)\star Z$&$4$ &$7$ & $U_{72}$ \\
      \hline
      4&$6_1.U_4(3).2_2\star Z$&$6$ &$13$ & $W(B_3)$ \\
      \hline
      5&$2.\Omega_8^+(2)\star Z$&$8$ &$11$ & $W(B_3)$ \\
 %     6&$2.\Omega_8^+(2)\star Z$&$8$ &$13$ & $S_4$ \\
      \bottomrule
    \end{tabular}
  \end{center} 

  We deal with these cases one by one.  If $C_G(x)$ is isomorphic to
  $S_4,\ W(B_3)$ or $D_{12}$, then we can use the complex character
  table of the group in order to find the dimension of the
  subspace $C_V(H)\leq V$ for a minimal subgroup $H\leq
  C_G(x)$. Indeed, in these cases every entry of the (complex)
  character table of $C_G(x)$ is a rational integer, so $C_G(x)$ has
  the same set of irreducible characters over $\FF p$ as it has over
  $\CC$ by using \cite[Theorem 15.13]{Ibook} and \cite[Theorem
  9.14]{Ibook}.

  \textbf{Case 1 ($\mathbf{C_G(x)\simeq S_4}$):} First, let us
  assume that $C_G(x)\simeq S_4$, so $V$ can be viewed as a faithful
  $\FF p S_4$-module.  Now, for a $\psi\in\irr(S_4)$ we have
  $A_4'\leq\ker\psi$ if and only if $\psi(1)\neq 3$.  It follows that
  $V$ contains a faithful irreducible $\FF p S_4$-submodule $U$ of
  dimension $3$.  Now, the character of $S_4$ related to $U$ is one of
  the following:
  \[
  \begin{array}{@{}|c|ccccc|@{}}
    \hline
    &(1)	&(12)	&(12)(34)	&(123)	&(1234)\\
    \hline
    \psi_1	&3  	&1	&-1		&0	&-1	\\
    \psi_2	&3  	&-1	&-1		&0	&1	\\
    \hline
  \end{array}
  \]
  Let $X_1$ and $X_2$ be the matrix representations of $S_4$ over
  $\overline{\FF p}$ with characters $\psi_1$ and $\psi_2$,
  respectively.  It follows easily from the character values and the
  order of the elements that
  \begin{gather*}
    X_1((12))\simeq \diag(1,1,-1),\quad X_2((12))\simeq
    \diag(1,-1,-1),\\ X_1((12)(34))\simeq
    X_2((12)(34))\simeq\diag(1,-1,-1),\\ X_1((123))\simeq X_2((123))\simeq
    \diag(1,\varepsilon,\varepsilon^2)
  \end{gather*}
  where $\varepsilon\in\overline{\FF p}$ is a primitive 3rd root of
  unity.  If $H$ is a minimal subgroup of $S_4$, then $H=\langle
  h\rangle$ for some $h\in S_4$ of order $2$ or $3$.  It follows from
  the above matrix forms that for an element $h\in C_G(x)$ of order
  $2$ or $3$ we have $\dim C_U(h)\leq 1$ for $h\in A_4$ and $\dim
  C_U(h)\leq 2$ for $h\in S_4\setminus A_4$.  Using part 2 of Lemma
  \ref{lem:qscent} with $r=6 ,\ t=7$ and $p=7$ we can choose a
  $y\in U\leq V$ such that $C_G(x)\cap C_G(y)=1$ holds.

  \textbf{Case 4 and 5 ($\mathbf{C_G(x)\simeq W(B_3)}$):} Now,
  $C_G(x)\simeq W(B_3)\simeq S_4\times K$ for $K=\langle
  k\rangle\simeq C_2$, so every $\chi\in\irr(W(B_3))$ is of the form
  $\chi=\psi\times\lambda$ for some $\psi\in\irr(S_4)$ and
  $\lambda\in\irr(K)$.  Again, $W(B_3)$ acts faithfully on $V$, so, by
  using the same argument as in Case 1, $V$ contains an irreducible
  $\FF p W(B_3)$-submodule $U$ of dimension $3$ and $S_4\leq W(B_3)$
  acts faithfully on $U$. If $K$ acts trivially on $U$, then, by Case
  1, we can find a $y_1\in U$ such that $C_G(x)\cap C_G(y_1)=K$. Let
  $V=U\oplus U_2$ for some $\FF pC_G(x)$-submodule $U_2$. As $K$ acts
  faithfully on $U_2$, there is a $y_2\in U_2$ such that
  $C_K(y_2)=1$. By choosing $y=y_1+y_2$ we get $C_G(x)\cap C_G(y)=1$.
  If $K$ acts faithfully on $U$, let $\chi=\psi\times\lambda$ be the
  character related to $U$. Then $\psi\in\irr(S_4)$,\ $\psi(1)=3$ and
  $\lambda(k)=-1$ holds.  By Case 1, either $\psi=\psi_1$ or
  $\psi=\psi_2$.  In either case, $U=C_U(g) \oplus C_U(gk)$ for any
  $g\in S_4$ with $o(g)=2$. Therefore,
  \begin{multline*}
    |\{H\leq C_G(x)\,|\,|H|=2,\,\dim C_U(H)=1\}|\\
    =|\{H\leq C_G(x)\,|\,|H|=2,\,\dim C_U(H)=2\}|=9.
  \end{multline*}
  Now, we can apply part 2 of Lemma \ref{lem:qscent} with $r=9,\ t=14$
  and $p\geq 11$ to get a $y\in U$ such that $C_G(x)\cap C_G(y)=1$.

  \textbf{Case 2 ($\mathbf{C_G(x)\simeq D_{12}}$):} 
  In this case we have $C_G(x)\simeq D_{12}= \langle f,t\,|
  \,f^6=t^2=1,\;tft=f^{-1}\rangle$.  Now, $D_{12}'=\langle f^2\rangle$
  and $D_{12}$ has four linear characters and 2 irreducible characters
  of degree two.  As $D_{12}$ acts faithfully on $V$ it follows that
  $V$ contains an irreducible $\FF p D_{12}$-submodule $U$ of
  dimension $2$.  Now, the character related to $U$ is one of the
  following
  \[
  \begin{array}{@{}|c|cccccc|@{}} 
    \hline
    &1&f^3	&\{f,f^5\}&\{f^2,f^4\}	&\{t,f^2t,f^4t\}&\{ft,f^3t,f^5t\}\\
    \hline
    \psi_1	&2&2	&-1  &-1	&0		&0		\\
    \psi_2	&2&-2	&1	  &-1	&0		&0		\\
    \hline
  \end{array}
  \] 
  If the character related to $U$ is $\psi_1$, then
  $D_{12}/\ker(\psi_1)\simeq D_6$ acts faithfully on $U$, so, by using
  part \textit{1} of Lemma \ref{lem:qscent} we can find a $y_1\in U$
  such that $C_G(x)\cap C_G(y_1)=\langle f^3\rangle$. Let $V=U\oplus
  U_2$ for some $\FF p C_G(x)$-submodule $U_2$.  Now, $\langle
  f^3\rangle$ acts faithfully on $U_2$, so $C_{\langle
    f^3\rangle}(y_2)=1$ for some $y_2\in U_2$. By choosing $y=y_1+y_2$
  we get $C_G(x)\cap C_G(y)=1$.  If the character related to $U$ is
  $\psi_2$, then $D_{12}$ acts faithfully on $U$. Now, $D_{12}$ has
  seven subgroups of order $2$ and only one of order $3$, the one
  generated by $f^2$.  However, in this case the related matrix
  $X_2(f^2)\simeq \diag(\varepsilon,\varepsilon^2)$ for some
  $\varepsilon\in\overline{\FF p}$, where $\varepsilon$ is a primitive
  3rd root of unity, so $f^2$ does not fix any vector in
  $U\setminus\{0\}$. Applying part 2 of Lemma \ref{lem:qscent} with
  $r=7,\ t=1$ and $p=7$ we get a $y\in U$ such that $C_G(x)\cap
  C_G(y)=1$.

  \textbf{Case 3 ($\mathbf{C_G(x)= U_{72}}$):} Although one can show
  that also in this case there is a regular orbit on $V$ for $C_G(x)$,
  we present here a simpler proof by showing that there is a
  two-dimensional subspace of $V\simeq \FF 7^4$ with trivial pointwise
  stabilizer in $G=2.U_4(2)\star \FF 7^\times\simeq 2.U_4(2)\times
  C_3$.  Let $N=2.U_4(2)$ and $a\in G$ a central element of order $3$,
  so $G=N\times \langle a\rangle$. For each prime divisor $p\mid |G|$,
  i.e. for $p=2,3,5$, let
  \[
  n_p:=|\{U\leq V\,|\,\dim U=2,\ \exists g\in G:o(g)=p\textrm{ and } U\leq 
  C_V(g)\}|.
  \]
  
  By using the character table of $N=2.U_4(2)$ found in Atlas
  \cite[page 27]{atlas}, one sees that the corresponding Brauer
  character of the $\FF 7 G$-module $V$ is one of the two conjugate
  characters $\chi_{21},\chi_{22}\in\irr(N)$. Since we are only
  interested in dimensions of fixed point spaces of elements of
  $G$, we can assume that the Brauer character of $V$ is
  $\chi_{21}$.

  Now, for each prime divisor $p\mid |G|$ and for
  each conjugacy class $\mathcal C\subseteq N$ consisting of elements
  of order $p$, the following table contains the size of $\mathcal C$,
  the character value of $\chi_{21}$ on $\mathcal C$, and its unique
  decomposition to the sum of four $p$-th roots of unity.
  (From this point of the proof $\ve$ and $\omega$ denote 
  a primitive third and fifth root of unity, respectively.)
  
  \begin{center}
  \begin{tabular}{@{}ccccc@{}}
    \toprule
    p&conj.\ class ($\mathcal C$)& $|\mathcal C|$ & $\chi_{21}(g)\ 
    (g\in\mathcal C)$&decomposition\\
    \midrule
    2&2A&1  &-4&-1-1-1-1\\
     &2B&90 &0 &-1-1+1+1\\
    \midrule
    3&3A& 40&$-1/2 + 3\sqrt{3}/2i$&$1+1+1+\ve$\\
     &3B& 40&$-1/2 - 3\sqrt{3}/2i$&$1+1+1+\ve^2$\\
     &3C&240&-2&$\ve+\ve+\ve^2+\ve^2$\\
     &3D&480&1 &$1+1+\ve+\ve^2$\\
    \midrule
    5&5A&5184&-1&$\omega+\omega^2+\omega^3+\omega^4$\\
    \bottomrule
  \end{tabular}
  \end{center}

  It follows that $n_2\leq 90$ and $n_5=0$. Furthermore, 
  if $(n,a^k)\in N\times\langle a\rangle$ is an element of 
  order $3$, then 
  \[
  \dim C_V((n,a^k))=\left\{
    \begin{array}{l@{\quad}l}
      3&\textrm{ if }n\in 3A\cup 3B,\ a^k=1,\\
      2&\textrm{ if }n\in 3C,\ a^k\neq 1,\\
      2&\textrm{ if }n\in 3D,\ a^k=1,\\
      \leq 1&\textrm{ otherwise}.
    \end{array}
  \right.
  \]
  As the number of two-dimensional subspaces in a three dimensional
  subspace of $V$ is $57$ we get
  \begin{multline*}
  \left|\left\{(U,g)\,|\,U\leq V,\ \dim U=2,
    \ g\in G,\ o(g)=3,\ U\leq C_V(g)\right\}\right|\\
  \leq 2\cdot 40\cdot 57+240\cdot 2+480=5520.
  \end{multline*}
  Since $C_V(g)=C_V(g^{-1})$ for any $g\in G$ and the intersection of
  two three dimensional subspaces in $V$ is a two dimensional subspace,
  we get $n_3\lneqq \frac{5520}{2}=2760$.

  Hence the number of two dimensional subspace of $V$ fixed pointwise
  by some minimal subgroup of $G$ is less than $90+2760=2850$. However, the
  number of all two dimensional subspaces of $V$ is exactly $2850$,
  which completes our proof.
\end{proof}
%%%%%%%%%%%%%%%%%%%%%%%%%%%%%%%%%%%%%%%%%%%%%%%%%%%%
\section{Groups of symplectic type}\label{sec:symplectic}
The purpose of this section is to prove Theorem \ref{thm:main_2base}
for coprime linear groups of symplectic type. First, we define what do
we mean under such a group.
%--------------------------------
\begin{defin}\label{def:symp}
  Let $V$ be a vector space and $G\leq GL(V)$ a linear group.  We say
  that $G$ is of symplectic type if it has normal subgroups $Z\leq
  N\leq G$ such that $Z=Z(G)$, the quotient group $N/Z\nor G/Z$ is an
  elementary abelian $r$-subgroup for some prime number $r$ and $V$ is
  an absolutely irreducible $KN$-module.
\end{defin}
%--------------------------------
Throughout this section let $V$ be an $n>1$ dimensional vector space 
over $K\simeq \FF q$ for some prime power $q$
and let $G\leq GL(V)$ be a coprime linear 
group of quasisimple type with normal subgroups $Z\leq N\leq G$ 
according to Definition \ref{def:symp}. We also assume that 
$Z=Z(G)\simeq K^\times$ is the group of all scalar transformations.

With these assumptions $N=R\star Z$, 
where $R$ is an extraspecial $r$-group, $R\cap Z\simeq C_r$ and 
$V$ is a faithful and absolutely irreducible representation of $R$ over $K$. 
%---------------------------------
\begin{defin}\label{def:mon}
  We say that $N$ is not monomial, if $r=2$, $q\equiv -1\mod 4$, and
  $R$ is a central product of some dihedral groups $D_8$ of order
  eight by one quaternion group $Q$. Otherwise, we say that $N$ is
  monomial.
\end{defin}
%---------------------------------
\begin{rem}
  The explanation of our terminology is that $N$ is monomial if and
  only if $V$ is a monomial representation of $N$.  This will be shown
  later as part of a more detailed description of the $KN$-module $V$.
\end{rem}
%========================================================
\subsection{Finding a two-element base in case when 
  \texorpdfstring{$N$}{N} is monomial}\label{subsec:mon}
In the following we assume that $N$ is monomial.
%---------------------------
\begin{thm}\label{thm:N_mon}
  There exist a decomposition $N=D\rtimes S$ and a suitable basis
  $\{\mb{n}\}\sbs V$ with the following properties:
  \begin{enumerate}
  \item $D=Z\times D_1$ and $D_1\simeq S\simeq C_r^k$ for $r^k=n$.
  \item With respect to $\mb{n}$, the subgroup $S\leq GL(n,K)$
    consists of permutation matrices. Moreover, $S$ acts
    regularly on this basis.
  \item With respect to $\mb n$, the subgroup $D\leq GL(n,K)$ consists
    of diagonal matrices.  The subspaces $\langle u_i\rangle,\ 1\leq
    i\leq n$ are all the irreducible representations of $D_1$ with
    $\langle u_1\rangle$ being the trivial representation of $D_1$,
    and they are pairwise non-equivalent. Moreover, the main diagonal
    of every $g\in D_1\leq GL(n,K)$ contains all of the $o(g)$-th
    roots of unity with the same multiplicity.
  \end{enumerate}
\end{thm}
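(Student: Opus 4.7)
The plan is to find two complementary Lagrangian subspaces of the symplectic $\FF r$-space $\bar N := N/Z \simeq R/Z(R)$ (equipped with the commutator pairing valued in $Z(R) \leq Z$), both admitting elementary abelian lifts inside $N$, and then read off the basis from the eigenspace decomposition of $V$ under a maximal abelian subgroup of $R$ containing the first lift.

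To set things up, let $\phi : \bar N \to Z(R)$ send $\bar x$ to $x^r$. The class-two identity $(xy)^r = x^r y^r [y,x]^{\binom{r}{2}}$ makes $\phi$ a group homomorphism when $r$ is odd (since $r \mid \binom{r}{2}$) and, for $r=2$, a quadratic form whose associated bilinear form is the commutator pairing. For any Lagrangian $\bar L \sbs \bar N$, an elementary abelian lift $L_N \leq N$ of order $r^k$ with $L_N \cap Z = 1$ exists if and only if $\phi(\bar L) \sbs Z^{(r)} := \{z^r : z \in Z\}$: starting from arbitrary lifts $u_{\bar v} \in R$ of $\bar v \in \bar L$, one replaces $u_{\bar v}$ by $u_{\bar v} z_{\bar v}$ with $z_{\bar v}^r = \phi(\bar v)^{-1}$, and the resulting commuting order-$r$ elements generate the required subgroup of $N$. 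The key verification is that the monomial hypothesis of Definition \ref{def:mon} delivers two complementary Lagrangians $\bar D, \bar S$ on which $\phi$ takes values in $Z^{(r)}$. For $r$ odd, absolute irreducibility of $V$ forces $r \mid q-1$ and, when $R$ has exponent $r^2$, even forces $r^2 \mid q-1$; this places $Z(R) \leq Z^{(r)}$, so any complementary pair works. For $r=2$ with $R$ of plus type the quadratic form $\phi$ has Arf invariant zero, and any hyperbolic basis yields two complementary totally singular Lagrangians. For $r=2$ with $R$ of minus type the monomial hypothesis is exactly $q \equiv 1 \pmod 4$, equivalent to $-1 \in Z^{(2)}$, so again $Z(R) \leq Z^{(2)}$ and any complementary Lagrangian pair suffices.

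With $D_1, S \leq N$ so obtained, set $D := Z \times D_1$ of order $r^k(q-1)$. Since $\bar N$ is abelian every subgroup of $N$ containing $Z$ is normal in $N$, so $D \nor N$. The image of $D \cap S$ in $\bar N$ lies in $\bar D \cap \bar S = 0$, so $D \cap S \leq S \cap Z = 1$, and counting orders gives $N = D \cdot S$, whence $N = D \rtimes S$ and part 1 is established. For the basis, let $A := D_1 \cdot Z(R) \leq R$ be a maximal abelian subgroup of $R$ of order $r^{k+1}$. As $V$ is absolutely irreducible of dimension $r^k$ under $R$, the restriction $V\big|_A$ splits as the direct sum of the $r^k$ one-dimensional characters of $A$ extending the central character $\iota : Z(R) \hookrightarrow K^\t$, and further restriction gives every character of $D_1$ exactly once in $V\big|_{D_1}$. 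Choose a nonzero $u_1 \in V$ fixed by $D_1$ and set $u_s := s \cdot u_1$ for $s \in S$. Since $N$ acts transitively on the $r^k$ one-dimensional $D$-eigenspaces in $V$ (equivalent to the irreducibility of $V$) and $|S| = r^k$, the vectors $u_s$ lie in distinct one-dimensional $D$-eigenspaces and form a basis of $V$. The identity $s' \cdot u_s = u_{s's}$ gives part 2, and $d \cdot u_s = s(s^{-1} d s) u_1 = \chi_0(s^{-1} d s)\, u_s$ (where $\chi_0$ is the character of $D$ on $u_1$) shows $D$ acts diagonally; as $s$ varies over $S$, the characters $\chi_0(s^{-1}(\cdot)s)\big|_{D_1}$ exhaust $\hat D_1$, with $u_1 = u_{1_S}$ carrying the trivial character. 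For $g \in D_1$ of order $r$, the surjective homomorphism $\hat D_1 \to K^\t$, $\chi \mapsto \chi(g)$, has kernel of index $r$, so every $r$-th root of unity appears exactly $r^{k-1}$ times on the diagonal of $g$, completing part 3.

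The main obstacle is translating the case-by-case monomial hypothesis into the uniform Lagrangian-lifting statement $\phi(\bar D), \phi(\bar S) \sbs Z^{(r)}$. The excluded case $r=2$, $q \equiv -1 \pmod 4$, $R$ of minus type is precisely where both mechanisms fail: the form $\phi$ has Arf invariant one so no Lagrangian is totally singular, and $-1$ is not a square in $\FF q$, so $Z(R) \not\sbs Z^{(2)}$.
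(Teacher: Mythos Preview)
Your approach via the symplectic geometry of $\bar N=N/Z$ and the lifting criterion $\phi(\bar L)\subseteq Z^{(r)}$ is sound for $r=2$ and more conceptual than the paper's hands-on construction, but there is a genuine gap in the odd-$r$ case. The assertion that absolute irreducibility of $V$ forces $r^2\mid q-1$ when $\exp(R)=r^2$ is false: the faithful irreducible character of an extraspecial $r$-group takes values in $\ZZ[\zeta_r]$ regardless of the exponent, and since Schur indices over finite fields are trivial the representation is already realized over $\FF p(\zeta_r)\subseteq K$ once $r\mid q-1$. Concretely, take $r=3$, $q=7$, and $R=\langle a,b\mid a^9=b^3=1,\ [a,b]=a^3\rangle$. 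Here $Z(R)=\langle a^3\rangle$ is not contained in $(K^\t)^3$ (the cubes in $\FF 7^\t\simeq C_6$ form the subgroup of order $2$), and in fact no complement $S\simeq C_3$ to $D=Z\langle b\rangle$ exists in $N$: every $s\in N$ with $\bar s\notin\langle\bar b\rangle$ satisfies $s^3\in a^{\pm 3}\cdot(K^\t)^3$, hence $s^3\neq 1$. So not only your argument but the theorem itself fails in this example without a further hypothesis.

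The paper supplies that hypothesis implicitly: it uses (without restating it in the section's setup) that $N/Z$ is a \emph{minimal} normal subgroup of $G/Z$. For odd $r$ the set $\Omega=\{x\in R:x^r=1\}$ is a characteristic subgroup of $R$, and since your $\phi$ is a homomorphism with image in $C_r$ one has $Z(R)\subsetneq\Omega\subsetneq R$ whenever $\exp(R)=r^2$; then $Z\subsetneq\Omega Z\subsetneq N$ contradicts minimality, forcing $\exp(R)=r$. Once $\exp(R)=r$ your map $\phi$ is identically trivial, any complementary Lagrangian pair lifts inside $R$ itself, and the remainder of your argument (the Clifford-theoretic construction of the basis and the character count for part~3) goes through unchanged. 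For $r=2$ your Arf-invariant and square analysis is correct and is the abstract version of what the paper does by hand: choosing order-$2$ generators in each $D_8$ factor for the plus type, and, in the minus-type case with $q\equiv 1\pmod 4$, replacing the quaternion factor $Q=\langle i,j\rangle$ by $\langle\lambda i,\lambda j\rangle\simeq D_8$ with $\lambda\in Z$ of order $4$.
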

%------------------
\begin{proof}
  It is well-known that any extraspecial group of order $r^{2k+1}$ is
  a product of two Abelian subgroups of order $r^{k+1}$.  First we
  prove that in our case we can assume that $R$ is a product of two
  elementary abelian $r$-groups.  For $r>2$ the elements of orders
  dividing $r$ form a characteristic subgroup in $R$ (since in this
  case $(xy)^r=x^ry^r$ holds in $R$).  The product of this subgroup
  with $Z$ is a normal subgroup of $G$.  As there is no proper normal
  subgroup of $G$ between $Z$ and $N$, we get $\exp(R)=r$ if $r>2$.
  If $R=R_1\star R_2\star\ldots\star R_k$ is a central product
  of $k$ dihedral groups $D_8$ of order $8$, we can choose elements
  $x_i,y_i\in R_i$ of order $2$ such that $R_i=\langle
  x_i,y_i\rangle$. In this case $\langle x_1,\ldots,x_k,Z(R)\rangle$
  and $\langle y_1,\ldots,y_k,Z(R)\rangle$ are two elementary abelian
  subgroups whose product is $R$.  If $R$ is a central product
  of $k-1$ dihedral groups $D_8$ by a quaternion group $Q=\langle
  i,j\rangle\leq R$ and $q\equiv 1\mod 4$, let $\lambda\in Z$ be an
  element of order 4. By defining $H=\langle\lambda i,\lambda
  j\rangle\leq Z Q$ we get $H\simeq D_8$ and $Z H=Z Q$.  So in this
  case we can replace $R$ by a central product of $k$ dihedral groups,
  and we can apply the previous argument.

  Let $R=D_R S_R$ be a product of two elementary abelian groups of
  order $r^{k+1}$, $D=Z D_R$ and $S$ a complement of $Z(R)$ in
  $S_R$. Then $N=D\rtimes S$.  As $K$ contains the $\exp(D)$-th roots
  of unity, every irreducible $K$-representation of $D$ is
  one-dimensional.  Fix an $ u_1\in V$ such that $K u_1$ is a
  one dimensional $D$-invariant subspace. Choosing $D_1=C_D( u_1)$
  we have $D=Z\times D_1$ and $D_1\simeq S\simeq C_r^k$. It is
  well-known that any non-linear absolutely irreducible representation
  of an extraspecial group of order $r^{2k+1}$ has degree $r^k$, hence
  $r^k=n$ and \textit{1} holds.

  Let $S=\{s_1,s_2,\ldots,s_n\}$ and for each $1\leq i\leq n$ let $
  u_i$ be defined as $ u_i=s_i( u_1)$.  Since $D\nor N$, by
  Clifford theory we have $K u_i$ is a $D$-invariant subspace for
  each $1\leq i\leq n$.  Hence $\langle\mb n\rangle$ is a
  $DS=N$-invariant subspace, so it is equal to $V$. As $n=|S|=\dim V$,
  it follows that $\mb n$ is a basis of $V$ and $S$ acts regularly on
  this basis.  So \textit{2} follows.

  We have already seen in the last paragraph that $\langle
  u_i\rangle$ is a $D$-invariant subspace for each $i$, hence $D\leq
  GL(n,K)$ consists of diagonal matrices with respect to $\mb n$.  The
  claim that $\langle  u_i\rangle$ are pairwise non-equivalent
  $KD_1$-modules follows easily from the fact $C_S(D_1)=1$.  Indeed,
  let $ u_i\neq  u_j$ be two basis elements. Then $ u_j=s
  u_i$ for some $s\in S\setminus \{1\}$. Let $d\in D_1$ such that
  $[d,s]\in Z\setminus\{1\}$, and let $d=\diag(d_1,\ldots,d_n)$ be the
  diagonal form of $d$. Then $d_j u_j=[d,s](d_i u_j)$, so
  $d_j\neq d_i$ which proves that $\langle  u_i\rangle$ and
  $\langle  u_j\rangle$ are non-isomorphic $KD_1$-modules.  As
  $|D_1|=n$, these are all the irreducible representations of
  $D_1$. Furthermore, $\langle u_1\rangle$ is the trivial
  representation of $D_1$ by definition.  Finally, if $g\in D_1$, then
  any linear representation of $\langle g\rangle$ can be extended to
  $D_1$ in exactly $|D_1|/o(g)$ ways, which completes the proof of
  \textit{3}.
\end{proof}
%----------------------------------------
\begin{thm}\label{thm:u1_centr}
  Let $g\in G$ be any group element fixing $ u_1$. Then
  \begin{enumerate}
  \item $D_1^g=D_1$ and $g$ is a monomial matrix.  Hence there exists
    a unique decomposition $g=\delta(g)\pi(g)$ to a product of a
    diagonal matrix $\delta(g)$ and a permutation matrix $\pi(g)$.
  \item $\pi(g)$ normalizes $S$, that is, $S^{\pi(g)}=S$.
  \item Both $\delta(g)$ and $\pi(g)$ normalize $N$, so
    $N=N^{\delta(g)}=N^{\pi(g)}$. Moreover, $[\delta(g), S]\sbs D$.
  \item If $\delta(g) \neq 1$, then the number of $1$'s
    in the main diagonal of $\delta(g)$ is at most $\frac{3}{4}n$.
  \end{enumerate}
\end{thm}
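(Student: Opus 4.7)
The plan is to prove the four parts in sequence, exploiting the explicit monomial structure of $N = D \rtimes S$ given by Theorem \ref{thm:N_mon}.

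\textbf{Part 1.} The crucial observation is that $C_N(u_1) = D_1$: writing any $n \in N$ as $ds$ with $d \in D$, $s \in S$, the condition $nu_1 = du_{s(1)} = u_1$ forces $s(1) = 1$ (hence $s = 1$ by the regular action of $S$ on the basis), and then $du_1 = u_1$ gives $d \in D_1$. Since $N \nor G$, conjugation by $g$ yields
\[
g D_1 g^{-1} = g\, C_N(u_1)\, g^{-1} = C_N(g u_1) = C_N(u_1) = D_1.
\]
By Theorem \ref{thm:N_mon}.3 the lines $\langle u_i\rangle$ are the pairwise non-equivalent $D_1$-isotypic components of $V$; since $g$ normalizes $D_1$, it must permute these lines, so $g$ is a monomial matrix in the given basis and admits a unique decomposition $g = \delta(g)\pi(g)$.

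\textbf{Parts 2 and 3.} For any $s \in S$ I compute $g s g^{-1} = \delta(g)\bigl(\pi(g) s \pi(g)^{-1}\bigr) \delta(g)^{-1}$. Conjugation of a permutation matrix by a diagonal matrix preserves its permutation part, while every element of $N = DS$ is monomial with permutation part in $S$; hence $\pi(g) s \pi(g)^{-1} \in S$, proving Part 2. Similarly, for $d \in D_1$ the conjugate $g d g^{-1} = \delta(g)\bigl(\pi(g) d \pi(g)^{-1}\bigr)\delta(g)^{-1}$ equals $\pi(g) d \pi(g)^{-1}$ because two diagonal matrices commute; and $g$ normalizes $D_1$ (by Part 1), so $\pi(g) d \pi(g)^{-1} \in D_1$. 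Thus $\pi(g)$ normalizes both $D_1$ and $S$, hence $D = Z \times D_1$ and $N = DS$; consequently $\delta(g) = g\pi(g)^{-1}$ also normalizes $N$. For the commutator inclusion, $[\delta(g), s] = \delta(g) s \delta(g)^{-1} s^{-1}$ lies in $N$ (both factors are normalized by $\delta(g)$) and is diagonal, because $\delta(g) s \delta(g)^{-1}$ is monomial with permutation part $s$; so it lies in the diagonal part of $N$, which is exactly $D$.

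\textbf{Part 4.} I argue by contradiction, supposing that $I := \{i : \lambda_i = 1\}$ has cardinality $> 3n/4$, where $\delta(g) = \diag(\lambda_1,\ldots,\lambda_n)$. For each $s \in S$, the diagonal entry of $[\delta(g), s]$ at position $i$ equals $\lambda_i/\lambda_{s^{-1}(i)}$, which is $1$ for every $i \in I \cap s(I)$; by inclusion--exclusion this set has size $\geq 2|I|-n > n/2$. However, by Theorem \ref{thm:N_mon}.3 the value $1$ can appear on the diagonal of any non-identity element of $D$ at most $n/r \leq n/2$ times (an element of $D \setminus Z$ has its diagonal entries distributed uniformly over a coset of the $r$-th roots of unity, each value occurring $n/r$ times; a non-identity scalar has no $1$ on the diagonal at all). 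Hence $[\delta(g), s] = 1$ for every $s \in S$, meaning $\delta(g)$ centralizes $S$ and, being diagonal, also $D$; so $\delta(g) \in C_{GL(V)}(N) = Z$ by Schur's lemma applied to the absolutely irreducible $KN$-module $V$. But a scalar matrix with more than $3n/4$ ones on its diagonal must be the identity, contradicting $\delta(g) \neq 1$.

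The main obstacle would be recognizing the clean inclusion--exclusion in Part 4: once one sees that the constraint $[\delta(g), s] \in D$ from Part 3 imposes the combinatorial bound ``$\leq n/r$ ones on the diagonal of any non-identity element of $D$'', the threshold $|I| > 3n/4$ (equivalent to $2|I|-n > n/2$) emerges as the natural dividing line that triggers Schur's lemma and forces $\delta(g) = 1$.
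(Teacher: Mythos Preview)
Your proof is correct and follows essentially the same approach as the paper: identifying $D_1=C_N(u_1)$ to obtain the monomial structure, using the projection to permutation parts to see that $\pi(g)$ normalizes $S$ and $D$, and bounding the number of $1$'s in $\delta(g)$ via the diagonal distribution of elements of $D$ from Theorem~\ref{thm:N_mon}.3. The only cosmetic difference is in Part~4, where you run the argument by contradiction (forcing $[\delta(g),s]=1$ for all $s$ and invoking Schur's lemma) whereas the paper argues directly by first choosing a single $s\in S$ with $[\delta(g),s]\neq 1$; the underlying inclusion--exclusion estimate $|I\cap s(I)|\geq 2|I|-n$ is the same.
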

%------------------
\begin{proof}
  The statement $D_1^g=D_1$ follows from the fact $D_1=C_N(
  u_1)\nor C_G( u_1)$.  Consequently, $g$ permutes the homogeneous
  components of the $D_1$-module $V$. By part \textit{3} of Theorem
  \ref{thm:N_mon}, these homogeneous components are just the
  one-dimensional subspaces $\left< u_i\right>$ for $1\leq i\leq
  n$. It follows that $g$ is a monomial matrix.  Of course, a monomial
  matrix $g$ has a unique decomposition $g=\delta(g)\pi(g)$, and part
  \textit{1} is proved.

  The map $\pi:g\rightarrow \pi(g)$ defines a homomorphism from the
  group of monomial matrices into the group of permutation
  matrices. As $g\in G$ normalizes $N$, we have $\pi(g)$ normalizes
  $\pi(N)=S$ and \textit{2} follows.

  Both $g$ and $\delta(g)$ normalize $D$, hence
  $\pi(g)=\delta(g)^{-1}g$ also normalizes $D$.  We have already seen
  that $\pi(g)$ normalizes $S$, so it also normalizes
  $N=DS$. Therefore $\delta(g)=g\pi(g)^{-1}$ also normalizes
  $N$. Finally, $[\delta(g), S]$ is a subset of $N$ and it consists of
  diagonal matrices, so $[\delta(g), S]\sbs D$ and \textit{3} holds.

  If $\delta(g)\neq 1$, then $\delta(g)$ is not a scalar matrix, so
  there exists an $s\in S$ such that $[\delta(g), s] \in D\setminus \{
  1\}$. Using part \textit{3} of Theorem \ref{thm:N_mon}, we get that the
  number of $1$'s in the main diagonal of $[\delta(g), s]$ is at most
  $\frac{1}{2}n$.  This cannot be true if the number of $1$'s in
  $\delta(g)$ is more than $\frac{3}{4}n$. We are done.
\end{proof}
%------------------------------------------ 
By part \textit{2} of Theorem \ref{thm:N_mon}, $S$ acts regularly on
the basis $W=\{ u_1,\ldots, u_n\}$. Identifying $ u_1$ with
the unit element of $S$, this action defines a vector space structure
on $W$ isomorphic to $\FF r^k$. Viewing $W$ in this way as an $\FF
r$-vector space, the zero element of $W$ is $ u_1$.  (Here it may be a
bit confusing that $W$ is a basis of the original space, while it
itself has a vector space structure inherited from the regular action
of $S$ on $W$). Using the previous theorem, $C_G( u_1)$ consists of
monomial matrices and its permutation part $\pi(C_G( u_1))$ acts by
conjugation on $S$.  In fact, this action is faithful, since
$C_{GL(n,K)}( u_1)\cap C_{GL(n,K)}(S)=1$. It follows that the action of 
$\pi(C_G( u_1))$ on $W$ is linear, when we consider $W$ as a vector
space, so $\pi(C_G( u_1))\leq GL(W)\simeq GL(k,r)$.  Choosing a basis
$\{e_1, e_2,\ldots, e_k\}\sbs W$, the next theorem helps us to find a
``good'' $\pi(C_G( u_1))$-regular partition of $W$.
%-------------------------------
\begin{thm}\label{thm:reg_partition}
  Let $H\leq GL(W)$, where $r$ is a prime and $W\simeq \FF r^k$ is an
  $\FF r$-space with basis $ e_1, e_2,\ldots, e_k$. Then there is an
  $H$-regular partition $W=\cup_i\Omega_i$ of $W$ with the following
  properties
  \begin{enumerate}
  \item For $r\geq 3,\ k=1$ we have $W=\{ e_1\}\cup \Omega_2$ with
    $|\Omega_1|<\frac{1}{4}|W|$ if $|W|\neq 3$.
  \item For $r\geq 3,\ k\geq 2$ we have $W=\{ e_1\}\cup \Omega_2
    \cup \Omega_3$ such that $|\Omega_2|+2<\frac{1}{4}|W|$ if $|W|\neq
    9$.
  \item For $r=2,\ k\geq 3$ we have $|W|=\{ e_1\} \cup \{ e_2\}
    \cup \Omega_3 \cup \Omega_4$ such that $|\Omega_3|<\frac{1}{4}|W|$ 
    if $|W|\neq 16$.
  \end{enumerate}
\end{thm}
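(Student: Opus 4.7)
The plan is to treat the three cases separately. In each case we reduce the existence of the desired partition to finding a small ``distinguishing'' subset for the stabilizer of the prescribed singleton(s), and then invoke the regular-partition machinery from Section~2. The common reduction is that a partition of the form $\{e_1\}\cup\dots\cup\Omega$ is $H$-regular precisely when the setwise stabilizer of $\Omega$ inside the pointwise stabilizer of the prescribed singleton(s) is trivial, since the complementary large part then has the same setwise stabilizer as $\Omega$.

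\emph{Case 1} is immediate: $H\leq GL(W)\cong \FF r^{\times}$ acts on $W$ by scalar multiplication, so the pointwise stabilizer of any nonzero $e_1$ is trivial and $\{e_1\}\cup(W\setminus\{e_1\})$ is $H$-regular. The bound $1<|W|/4$ forces $r\geq 5$, which is exactly why $|W|=3$ is excluded.

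For \emph{Case 2}, fix any nonzero $e_1\in W$ and let $H'=H_{e_1}$. By the reduction above, the task becomes: produce $\Omega_2\sbs W\setminus\{e_1\}$ of size at most $\lfloor|W|/4\rfloor-3$ whose setwise $H'$-stabilizer is $\{1\}$. The plan is to apply Corollary~\ref{cor:dist_num} (or Theorem~\ref{thm:regpart_notalt}) to the action of $H'$ on $W\setminus\{e_1\}$ to get some distinguishing subset, and then trim it to the required size. \emph{Case 3} is completely analogous: one fixes two linearly independent basis vectors $e_1,e_2$, passes to $H''=H_{e_1,e_2}$, and seeks $\Omega_3\sbs W\setminus\{e_1,e_2\}$ of size $<|W|/4$ with trivial setwise $H''$-stabilizer.

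The main obstacle is the quantitative size bound. The Seress/Dolfi-type results of Section~2 guarantee \emph{some} distinguishing subset, but a priori nothing says it is small. Near the excluded values $|W|\in\{3,9,16\}$ the bound $|W|/4$ is extremely tight and the argument will most likely require a case-by-case inspection of the exceptional primitive components from Seress's list (possibly with a small GAP check, as in the proof of Theorem~\ref{thm:regpart_notalt}). For larger $|W|$ a counting estimate, comparing $|H'|$ (bounded by $|GL(W)|$) with the number of candidate subsets of size $\lfloor|W|/4\rfloor-3$ in $W\setminus\{e_1\}$, should show that not all of them can be fixed setwise by a nontrivial element of $H'$; this is where a modest slack in the constant $1/4$ is essential and accounts for the additive ``$+2$'' appearing in Case~2.
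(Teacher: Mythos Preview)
Your Case~1 is fine and is exactly what the paper does.

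For Cases~2 and~3 there is a real gap. The machinery of Section~2 controls the \emph{number of parts} of a regular partition, not the \emph{size} of any part: Theorem~\ref{thm:regpart_notalt} and Corollary~\ref{cor:dist_num} give a $t$-part regular partition under hypotheses on $t$ relative to $|H|$, but say nothing about producing a single distinguishing subset, let alone one of size $<|W|/4$. Your suggested counting fallback is not carried out and is more delicate than it looks: a single nontrivial $h\in H'$ can stabilise exponentially many subsets setwise (any union of its orbits), so summing $|H'|$ times a crude per-element bound does not obviously beat the total number of candidate subsets, especially since $|H'|$ can be as large as $|GL(k,r)|\sim r^{k^2}$ while the target size $\lfloor |W|/4\rfloor-3$ is a fixed fraction of $|W|=r^k$.

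The paper avoids all of this by an explicit construction. In Case~2 it takes
\[
\Omega_2=\{e_2,\ldots,e_k,\; e_1+e_2,\;e_2+e_3,\ldots,e_{k-1}+e_k\},
\]
so $|\Omega_2|=2k-2$, and checks by a short induction on $t$ that any $h$ fixing $e_1$ and $\Omega_2$ setwise must satisfy $he_t=e_t$ for all $t$: the only element of $\Omega_2\setminus\langle e_1,\ldots,e_{t-1}\rangle$ containing $e_{t-1}$ is $e_{t-1}+e_t$, which forces $h(e_{t-1}+e_t)=e_{t-1}+e_t$ (here $r\neq 2$ is used). The inequality $|\Omega_2|+2=2k<r^k/4$ is then bare arithmetic. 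Case~3 uses the analogous chain $e_3,\ldots,e_k,\;e_2+e_3,\ldots,e_{k-1}+e_k,\;e_k+e_1$ (with $e_1,e_2$ already pinned), giving $|\Omega_3|=2k-3$. No Seress/Dolfi input or GAP check is needed; the whole argument is a direct combinatorial verification that this particular ``path-like'' set is rigid under linear maps fixing its endpoints.
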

%--------------
\begin{proof}
  First, let us assume that $r\geq 3$ and $k=1$. Then $\Omega_1=\{
  e_1\}$ is a basis of the one dimensional space $W$ with
  $|\Omega_1|=1<\frac{5}4\leq \frac{1}4|W|$ if $r\geq 5$.

  In case of $r\geq 3,\ k\geq 2$ let 
  \begin{gather*}
    \Omega_1=\{ e_1\},\quad
    \Omega_2=\{ e_2, e_3,\ldots, e_k,
     e_1+ e_2, e_2+ e_3,\ldots, e_{k-1}+ e_k\},\\
    \Omega_3=W\setminus(\Omega_1\cup\Omega_2).
  \end{gather*}
  To prove that this is an $H$-regular partition of $W$, let $h\in
  H_{\Omega_1}\cap H_{\Omega_2}$.  By using induction on $t$, we prove
  that $h e_t= e_t$ for each $1\leq t\leq k$, that is, $h=1$.
  First, our choice $\Omega_1=\{ e_1\}$ guarantees that $h
  e_1= e_1$.  Assuming that $h e_i= e_i$ for all $1\leq
  i<t\leq k$, it follows that $h( e_t)$ and $h( e_{t-1}+
  e_t)$ are elements of the set
  \[
  \Omega_2\setminus\langle e_1,\ldots, e_{t-1}\rangle=\{
  e_t, e_{t+1},\ldots, e_k, e_{t-1}+ e_t,\ldots,
  e_{k-1}+ e_{k}\}.
  \]
  Since $h(e_{t-1}+ e_t)-h( e_t)=e_{t-1}$, it follows that either
  $h(e_t)$ or $h(e_{t-1}+ e_t)$ contains $e_{t-1}$ with non-zero
  coefficient. However, there is only one element in
  $\Omega_2\setminus\langle e_1,\ldots, e_{t-1}\rangle$ with this
  property, namely, $e_{t-1}+ e_t$.  So either $h( e_{t-1}+ e_t)=
  e_{t-1}+ e_t$ or $h( e_t)= e_{t-1}+ e_t$. In the latter case
  $h(e_{t-1}+ e_t)=2 e_{t-1}+ e_t\not\in \Omega_2$, since $r\neq 2$, a
  contradiction. It follows that $h( e_{t-1}+ e_t)= e_{t-1}+ e_t$, so
  $h(e_t)=h(e_{t-1}+ e_t)-h( e_{t-1})= e_t$. Thus, $h e_t= e_t$ for
  each $1\leq t\leq k$, which proves that the given partition is
  $H$-regular.  The inequality
  $|\Omega_2|+2=2k<\frac{1}{4}r^k=\frac{1}{4}|W|$ holds for $k\geq 2,\
  r\geq 5$ or for $k\geq 3,\ r=3$, but it fails for $k=2,\ r=3$, which
  proves \textit{2}.

  For $r=2,\ k=3$ let
  \begin{gather*}
    \Omega_1=\{ e_1\},\quad \Omega_2=\{
    e_2\},\quad \Omega_3=\{ e_3\},\\
    \Omega_4=W\setminus(\Omega_1\cup\Omega_2\cup\Omega_3),
  \end{gather*}
  while for $r=2,\ k\geq 4$ let
  \begin{gather*}
    \Omega_1=\{ e_1\},\quad \Omega_2=\{ e_2\},\\
    \Omega_3=\{ e_3,\ldots, e_k,  e_2+ e_3,
     e_3+ e_4, e_4+ e_5,\ldots, e_{k-1}+ e_k,
     e_k+ e_1\},\\
    \Omega_4=W\setminus(\Omega_1\cup\Omega_2\cup\Omega_3).
  \end{gather*}
  Now, for $k=3$, the above partition is clearly $H$-regular, since we
  fixed each element of the basis $ e_1, e_2,
  e_3$. Furthermore, $|\Omega_3|=1<2=\frac{1}{4}|W|$ holds. In case
  $k\geq 4$, we prove that the given partition is $H$-regular by using
  a similar argument as we did in case \textit{2}. Let $h\in H$ be an
  element fixing every element of the partition.  Assuming that $h(
  e_i)= e_i$ for all $1\leq i<t<k$ we get that $h( e_t)$ and $h(
  e_{t-1}+ e_t)$ are elements of the set
  \[
  \Omega_3\setminus\left< e_1,\ldots, e_{t-1}\right>=\{
  e_t, e_{t+1},\ldots, e_k, e_{t-1}+ e_t,\ldots,
  e_{k-1}+ e_{k}, e_k+ e_1\}.
  \]
  Since $h( e_t)+h( e_{t-1}+ e_t)= e_{t-1}$, we have
  either $h( e_{t-1}+ e_t)= e_{t-1}+ e_t$ or $h(
  e_t)= e_{t-1}+ e_t$.  In the former case we get $h(
  e_t)= e_t$, while in the latter case we can take $ e_t+
  e_{t+1}\in\Omega_3$ since $t<k$. Now $ e_{t-1}$ occurs with $0$
  coefficient both in $h( e_t+ e_{t+1})$ and $h( e_{t+1})$,
  since the only element of $\Omega_3\setminus\left< e_1,\ldots,
    e_{t-1}\right>$ containing $ e_{t-1}$ with nonzero coefficient
  is $h( e_t)$.  However, $h( e_t+ e_{t+1})+h(
  e_{t+1})=h( e_t)= e_{t-1}+ e_t$, a contradiction.  It
  remains to prove that $h( e_k)= e_k$.  It is clear that
  \[
  h( e_k)\in\Omega_3\setminus\left< e_1, e_2,\ldots,
    e_{k-1}\right>= \{ e_k, e_{k-1}+ e_k,  e_k+ e_1\}.
  \]
  If $h( e_k)= e_{k-1}+ e_k$, then $h( e_k+ e_1)=
  e_{k-1}+ e_k+ e_1\not\in\Omega_3$.  If $h( e_k)=
  e_{k}+ e_1$, then $h( e_{k-1}+ e_k)= e_{k-1}+
  e_{k}+ e_1\not\in\Omega_3$.  Thus $h( e_k)= e_k$ also
  holds. It follows that $h=1$, so the given partition is $H$-regular.
  Finally, $|\Omega_3|=2k-3<2^{k-2}=\frac{1}{4}|W|$ if $|W|>16$.
\end{proof}
%----------------------------------------------
\begin{thm}\label{thm:rneq2}
  With the above notation, let us assume that $r\geq 3$ and
  let $x$ be defined as $x= u_1$. Furthermore, let $\alpha\in K$ be
  a generator of the multiplicative group of $K$. By using parts
  \textit{1-2} of Theorem \ref{thm:reg_partition} let $y$ be defined
  as follows
  \begin{align*}
    &\textrm{For\ \ } k=1,\ |W|\neq 3:&&y=\sum_{u_i\in\Omega_2} u_i,\\
    &\textrm{For\ \ } k\geq 2,\ |W|\neq 9:&&y=
    \alpha e_1+\sum_{u_i\in\Omega_3} u_i,\\
    &\textrm{For\ \ } |W|=9:&& y=\alpha e_1+0\cdot e_2+
    \!\!\sum_{u_i\notin\{e_1, e_2\}}\!\! u_i,\\
    &\textrm{For\ \ } |W|=3:&&
    y=\alpha u_2+ u_3.
  \end{align*}
  Then $C_G(x)\cap C_G(y)=1$.
\end{thm}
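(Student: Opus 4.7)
Let $g\in C_G(x)\cap C_G(y)$ and apply Theorem~\ref{thm:u1_centr}(1) to write $g=\delta(g)\pi(g)$, where $\delta(g)=\diag(d_1,\dots,d_n)$ and $\pi(g)$ is the matrix of a permutation $\sigma$ of the indices $\{1,\dots,n\}$; the equality $gu_1=u_1$ forces $d_1=1$ and $\sigma(1)=1$. Via the identification $W\simeq S$, $\sigma$ corresponds to a linear automorphism $h\in\pi(C_G(u_1))\leq GL(W)$ fixing $0$. Expanding $gy=y$ in the basis $\{u_1,\dots,u_n\}$ yields the coefficient-wise identity
\[
d_j\, c_{\sigma^{-1}(j)}=c_j\qquad\text{for every } j,
\]
where $c_j$ is the coefficient of $u_j$ in $y$; in particular $\sigma$ preserves setwise the zero-set $\{j\mid c_j=0\}$.

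The central case is $k\geq 2,\ |W|\neq 9$. Here $y$ has coefficients $\alpha$ at $i_1$ (the index of $e_1$), $1$ on $\Omega_3$ and $0$ on $\Omega_2$, so $\sigma(\Omega_2)=\Omega_2$ and $\sigma$ permutes $\{i_1\}\cup\Omega_3$. Splitting according to whether $\sigma(i_1)=i_1$, one obtains either (a) $d_j=1$ for all $j\in\{i_1\}\cup\Omega_3$, or (b) $d_{i_1}=\alpha$, $d_{\sigma(i_1)}=\alpha^{-1}$ and $d_j=1$ for the remaining $j\in\Omega_3$. In either sub-case the number of non-$1$ entries of $\delta(g)$ is at most $|\Omega_2|+2$, which is strictly less than $n/4$ by Theorem~\ref{thm:reg_partition}(2). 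Theorem~\ref{thm:u1_centr}(4) then forces $\delta(g)=1$, which excludes sub-case (b); hence $\sigma$ fixes $i_1$ and preserves every part of the $H$-regular partition of Theorem~\ref{thm:reg_partition}, so $\sigma=1$ and $g=1$. The simpler case $k=1,\ |W|\geq 5$ is handled by the same template: the zero-set of $y$ is the singleton $\{i_1\}$, which forces $\sigma(i_1)=i_1$ and $d_j=1$ for $j\in\Omega_2$, leaving at most one non-$1$ entry of $\delta(g)$; Theorem~\ref{thm:u1_centr}(4) then gives $\delta(g)=1$ (since $n=|W|\geq 5$), and the $H$-regularity of $\{e_1\}\cup\Omega_2$ from Theorem~\ref{thm:reg_partition}(1) closes the argument.

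The residual small cases $|W|\in\{3,9\}$ are approached by the same scheme, with the trivially $H$-regular partition $\{u_1\},\{u_2\},\{u_3\}$ for $|W|=3$ and $\{e_1\},\{e_2\},W\setminus\{e_1,e_2\}$ for $|W|=9$ (both $H$-regular because fixing $e_1$, or $e_1$ and $e_2$, in $GL(W)$ forces triviality). The counting bound $|\Omega_*|+2<n/4$ fails in these sizes, so Theorem~\ref{thm:u1_centr}(4) does not rule out sub-case (b) on its own; one is left with an explicit candidate $g$ of order $2$ whose diagonal part is $\diag(1,\alpha,\alpha^{-1})$ (or its $|W|=9$ analogue) and whose permutation part is a specified non-trivial $\pi(g)$. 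To eliminate this last configuration I would invoke Theorem~\ref{thm:u1_centr}(3): since $g$ normalizes $N=D\rtimes S$, the conjugate $gsg^{-1}$ lies in $N$ for every $s\in S$, so its diagonal part must belong to $D=Z\cdot D_1$. Using the description of $D_1$ from Theorem~\ref{thm:N_mon}(3) (diagonal matrices whose entries are $r$-th roots of unity in a prescribed character pattern), this containment translates into a power relation such as $\alpha^{3}=1$ in $K^\times$, which is incompatible with $\alpha$ being a generator of $K^\times$ except for a handful of very small $q$; for those residual $q$, the putative $g$ has order $p=\kar K$, contradicting $(|G|,q)=1$. Cleanly organising this final elimination for $|W|\in\{3,9\}$ is the main obstacle of the proof.
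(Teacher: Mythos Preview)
Your overall strategy coincides with the paper's: decompose $g=\delta(g)\pi(g)$ via Theorem~\ref{thm:u1_centr}, read off which parts of the partition $\pi(g)$ must respect from the zero-set of $y$, bound the number of non-$1$ diagonal entries of $\delta(g)$, and kill $\delta(g)$ with Theorem~\ref{thm:u1_centr}(4). Your treatment of the two generic cases ($k=1,\ |W|\ge 5$ and $k\ge 2,\ |W|\ne 9$) is essentially identical to the paper's argument and is complete.

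The gap is in the two small cases, and particularly $|W|=9$. Your sketch lumps $|W|=9$ together with $|W|=3$ as ``eliminate an explicit order-$2$ candidate via a power relation like $\alpha^3=1$, then use coprimality for the residual $q$''. That template is right for $|W|=3$ (where the residual field is $q=4$ and the candidate really has order $2=p$), but it does \emph{not} fit $|W|=9$: there the bad $\pi(g)$ is far from unique, need not have order $2$, and no ``residual $q$'' case arises. The paper's actual manoeuvre for $|W|=9$ is more specific than a generic ``$gsg^{-1}\in N$'' argument. One first observes that in sub-case~(b) the diagonal $\delta(g)$ has an $\alpha$ at the position of $e_1$, an $\alpha^{-1}$ at the position $\sigma(e_1)$, and at most one further non-$1$ entry (at $e_2$). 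Now one \emph{chooses} $s\in S$ so that its permutation sends the $\alpha^{-1}$-position to the $\alpha$-position; since $S$ acts regularly this is possible. Then $[\delta(g),s]\in D$ carries an entry $\alpha^{2}\ne 1$ and, because $s$ has order $3$ and at most three positions of $\delta(g)$ are non-trivial, at least four entries equal to $1$. Theorem~\ref{thm:N_mon}(3) forces every non-scalar element of $D$ to have each diagonal value with multiplicity exactly $3$, so an element with $\ge 4$ ones and a further $\alpha^2\ne 1$ cannot lie in $D$. This rules out sub-case~(b) for \emph{all} admissible $q$, with no fallback needed; once $\pi(g)$ fixes $e_1$ and $e_2$ you are done as you indicated. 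So the missing ingredient in your sketch is precisely this targeted choice of $s$ and the multiplicity contradiction, rather than a single power relation on $\alpha$.
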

%------------
\begin{proof}
  Let $g\in C_G(x)\cap C_G(y)$. Since $g\in G$ fixes $x=u_1$, we get
  that $g$ is a monomial matrix, thus we have a decomposition $g=\dg
  \pg$.  Notice also that if the monomial matrix $g$ fixes $y$, then
  $\pi(g)$ permutes those basis elements of $W$ among each other which
  appear in $y$ with zero coefficients.

  In the first case $\pg$ fixes $\Omega_1$.  As
  $W=\Omega_1\cup\Omega_2$ is a $\pi(C_G( u_1))$-regular partition, we
  get $\pi(g)=1$. Hence $g=\dg$ is a diagonal matrix.  If $g_{i}$
  denotes the $i$-th element of the main diagonal of $g$, then $g\in
  C_G(y)$ holds only if $g_i=1$ for all $u_i\in\Omega_2$. As
  $|\Omega_2|>\frac{3}{4}|W|$, by using part \textit{4} of Theorem
  \ref{thm:u1_centr}, we get $g=\dg=1$.

  In the second case we see that $\pg$ fixes the subset $\Omega_2\sbs
  W$, since only the elements of $\Omega_2$ occur with coefficient $0$
  in $y$. However, in this case it is possible that $\pi(g)$ moves the
  unique element of $\Omega_1$ into an element of $\Omega_3$. Of
  course, in that case it moves an element of $\Omega_3$ into the
  element of $\Omega_1$. This results in the appearance of an $\alpha$
  and an $\alpha^{-1}$ in the main diagonal of $\dg$. It follows that
  in the main diagonal of $\dg$ the number of elements different from
  $1$ is at most $|\Omega_2|+2$, which is less than $\frac{1}{4}|W|$
  by Theorem \ref{thm:reg_partition}. Using part of \textit{4} of
  Theorem \ref{thm:u1_centr} we get $\dg=1$, hence $\pi(g)$ also fixes
  the unique element of $\Omega_1$, so $g=\pi(g)=1$.

  In case $|W|=9$ the matrix $\pg$ fixes $e_2$.  If $\pi(g)$ does
  not fix $e_1$, then in the main diagonal of $\dg$ there is an
  $\alpha$ and an $\alpha^{-1}$ and at most one more element not
  equal to $1$. Since $S$ acts regularly on $W$, we can choose an
  element $s\in S$ which takes the basis element corresponding to
  $\alpha^{-1}$ into the basis element corresponding to
  $\alpha$. Then, the main diagonal of $[\dg,s]\in D$ contains an
  $\alpha^2\neq 1$ and at least four $1$'s.  However, there is no such
  element in $D$ by part \textit{3} of Theorem \ref{thm:N_mon}, a
  contradiction.  Hence $\pi(g)$ also fixes $e_1$, so $\pi(g)=1$.
  Furthermore, there can be at most one element in the main diagonal of
  $g=\delta(g)$ which is different from $1$, namely, that $g_i$,
  for which $u_i=e_2$. Using part \textit{4} of Theorem
  \ref{thm:u1_centr} we get $g=1$.

  Finally, let $|W|=3$. First, let $\alpha^3\neq 1$. If $g$ would not be a
  diagonal matrix, then
  \[
  \dg=
  {\s
    \begin{pmatrix}1&0&0\\0&\alpha&0\\0&0&\alpha^{-1}\end{pmatrix}
  }
  \textrm{\ and\ }
  [\dg,s]=
  {\s
    \begin{pmatrix}\alpha&0&0\\0&\alpha^{-2}&0\\0&0&\alpha\end{pmatrix}
  },
  \textrm{\ for\ }
  s=
  {\s
    \begin{pmatrix}0&0&1\\1&0&0\\0&1&0\end{pmatrix}
  }\in S.
  \]
  Since $\alpha\neq \alpha^{-2}$, we have $[\dg,s]\notin D$ by part
  \textit{3} of Theorem \ref{thm:N_mon}, which contradicts part
  \textit{3} of Theorem \ref{thm:u1_centr}. So $g$ is a diagonal matrix.

  If $\alpha^3=1$, that is, $q=4$, then $\pi(C_G(u_1))=1$ by using the
  assumption $(|G|,|V|)=1$, so $g$ is again a diagonal matrix.  

  Since each basis element appears either in $x$ or in $y$,
  $C_G(x)\cap C_G(y)$ cannot contain any diagonal matrix different
  from $1$, which completes our proof.
\end{proof}
%------------------------------------------------
Still assuming that $N$ is monomial, now we handle the case $r=2$, that
is, $n=2^k$ for some $k$.  In case $n=2$ any basis will
obviously be good; let for example $x= u_1, y= u_2$.  Now, we analyze
the case $n=4$. In accordance with Theorem \ref{thm:N_mon}, we choose a
basis $\{u_1, u_2, u_3, u_4\}\sbs V$.  In this case $N=ZD_1S$, where the
Klein groups $D_1=\left<d_1, d_2\right>$ and $S=\left<s_1,s_2\right>$
are generated (independently from the base field) by the matrices:
\begin{gather*}
  d_1\!=\!
  {\s\begin{pmatrix}1&0&0&0\\  0& 1& 0& 0\\  
      0& 0&-1& 0\\  0& 0& 0 &-1\end{pmatrix}},\quad
  d_2\!=\!
  {\s\begin{pmatrix}   1&0&0&0\\  0&-1& 0& 0\\
      0& 0& 1& 0\\  0& 0& 0 &-1 \end{pmatrix}},\\[6pt]
  s_1\!=\!
  {\s\begin{pmatrix}   0&0&1&0\\  0& 0& 0& 1\\ 
      1& 0& 0& 0\\  0& 1& 0 & 0 \end{pmatrix}},\quad
  s_2\!=\!
  {\s\begin{pmatrix}   0&1&0&0\\  1& 0& 0& 0\\ 
      0& 0& 0& 1\\  0& 0& 1 & 0   \end{pmatrix}}.
\end{gather*}
%--------------------------------------------------------------
If the size of the base field is not equal to $3,5$ or $9$, then the
following theorem guarantees the existence of a two-element base
$\{x_0,y_0\}\in V$. (For a more consistent notation, in the next two
theorems the elements of the base are denoted by $x_0,y_0$ instead of
$x,y$ because they will be used in the constructions given for the case
$n=2^k,\ k\geq 3$.)
%--------------------------------------------------------------
\begin{thm}\label{thm:GL47}
  Let $N=Z\left<d_1,d_2,s_1,s_2\right> \nor G\leq GL(4,q)$, and assume
  that $q\neq 3,\,5,\,9$.  Furthermore, let $\alpha\in \FF q$ such
  that $\alpha^8\neq 1$. Set $x_0=u_1$ and $y_0= u_2+\alpha
  u_3+\alpha^{-1} u_4$.  Then $C_G(x_0)\cap C_G(y_0)=1$.
\end{thm}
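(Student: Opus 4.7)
The plan is to apply Theorem \ref{thm:u1_centr} and reduce the problem to a finite case check on the permutation part of $g$. Any $g \in C_G(x_0) \cap C_G(y_0)$ fixes $x_0 = u_1$ and hence is monomial, factoring uniquely as $g = \dg\pg$ with $\dg = \diag(1, d_2, d_3, d_4)$ and $\pg$ a permutation matrix fixing $u_1$, thus corresponding to some $\sigma \in S_3$ permuting $\{u_2, u_3, u_4\}$. Every such $\sigma$ automatically normalizes $S$, since $S \setminus \{1\}$ consists of the three double transpositions in $S_4$ on the basis $\{u_1, u_2, u_3, u_4\}$ and $S_3$ permutes them transitively by conjugation, so this step imposes no extra constraint.

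Next, I would use $g(y_0) = y_0$ to solve for $(d_2, d_3, d_4)$ uniquely in terms of $\sigma$ and $\alpha$ by matching coefficients of $u_2, u_3, u_4$. In the case $\sigma = \id$ this immediately forces $d_2 = d_3 = d_4 = 1$, so $g = 1$. For each of the five non-identity $\sigma \in S_3$, the resulting $\dg$ is a diagonal matrix whose entries are monomials in $\alpha$ and $\alpha^{-1}$.

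The key step is then to invoke Theorem \ref{thm:u1_centr}(3), which requires $[\dg, s_1] \in D$. By Theorem \ref{thm:N_mon}(3), $D = Z \times D_1$ consists of diagonal matrices of the shape $\diag(\lambda, \lambda\varepsilon_1, \lambda\varepsilon_2, \lambda\varepsilon_3)$ with $\varepsilon_i \in \{\pm 1\}$ and $\varepsilon_1\varepsilon_2\varepsilon_3 = 1$, so membership is very restrictive. A direct computation will show that in each of the five non-identity cases the successive diagonal ratios of $[\dg, s_1]$ are powers of $\alpha$ with exponent in $\{\pm 2, \pm 4\}$, and demanding that all these powers lie in $\{\pm 1\}$ forces $\alpha^8 = 1$, contradicting the hypothesis. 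Hence only $\sigma = \id$ is possible, giving $g = 1$.

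The main obstacle is purely bookkeeping: checking that the uniform hypothesis $\alpha^8 \neq 1$ (rather than a case-by-case weaker condition) suffices for all five non-identity $\sigma$. I expect the tightest constraint to come from $\sigma = (34)$ and the two three-cycles, which yield $\alpha^{\pm 4}$ on the diagonal of $[\dg, s_1]$. Note also that the existence of an $\alpha \in \FF q$ with $\alpha^8 \neq 1$ is ensured by $q \notin \{3, 5, 9\}$ together with $q$ odd (the latter being automatic from coprimeness of $G$ with $|V|$), since then $|\FF q^\times| = q-1$ does not divide $8$.
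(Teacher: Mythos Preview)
Your proposal is correct and follows essentially the same approach as the paper: use Theorem~\ref{thm:u1_centr} to write $g=\dg\pg$, determine $\dg$ from $g(y_0)=y_0$, and exploit $[\dg,s]\in D$ to force $\alpha^8=1$. The only difference is organizational---the paper first disposes of the four permutations for which $\dg$ contains an entry $\alpha^{\pm 1}$ with a single argument (choosing $s\in S$ so that $[\dg,s]$ has both $\alpha$ and $\alpha^{-1}$ on its diagonal), leaving only $\sigma=(34)$ to be handled explicitly via $[\dg,s_1]$, whereas you plan to treat all five non-identity permutations uniformly by computing $[\dg,s_1]$ in each case.
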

%--------------------
\begin{proof}
  Let $g\in C_G(x_0)\cap C_G(y_0)$.  By the choice of $x_0$ we know that $g$
  is a monomial matrix.  The first element in the main diagonal of
  $\delta(g)$ is 1, and the others are from the set
  $\{1,\alpha,\alpha^{-1},\alpha^2,\alpha^{-2}\}$. If $\delta(g)$
  contains an $\alpha$ or an $\alpha^{-1}$, then for some $s\in S$ we
  get $[\delta(g),s]\in D=Z\times D_1$ contains both $\alpha$ and
  $\alpha^{-1}$. By part \textit{3} of Theorem \ref{thm:N_mon}, this
  is impossible unless $o(\alpha^2)|4$ which does not hold.  It
  follows that either $g=1$, or
  \[
  g=
  \begin{pmatrix}     1\,&0&0&0\\ 0\,&1          &0&0\\ 
    0\,&0&0&\alpha^2\\ 0\,&0&\alpha^{-2}&0\end{pmatrix},\qquad\textrm{and}\qquad
  [\delta(g),s_1]=
  \begin{pmatrix}\alpha^2&0&0&0\\ 0  &\alpha^{-2}&0&0\\ 
    0&0&\alpha^{-2}&0\\0  &0&0&\alpha^2    \end{pmatrix}.
  \]
  In the latter case $[\delta(g),s_1]\in D$, so we get $o(\alpha^4)|2$, which
  is again impossible.
\end{proof}
%--------------------------------------------------------
In the remaining cases we have found the following pair of vectors 
by using the GAP system \cite{gap}.
\begin{thm}\label{thm:GL43}
  \begin{enumerate}
  \item 
    Let $N=Z\left<d_1,d_2,s_1,s_2\right>\leq GL(4,3)$ and let $G$ be the
    normalizer of $N$ in $GL(4,3)$.  Let $x_0,y_0\in V$ be 
    \[
    x_0=u_1,\qquad y_0= u_1+ u_3+ u_4.
    \]
    Then for a suitable $3'$-Hall subgroup $H\leq G$ we have
    $C_H(x_0)\cap C_H(y_0)=1$.  
  \item Let $N=Z\left<d_1,d_2,s_1,s_2\right>\leq GL(4,9)$ and let $G$
    be the normalizer of $N$ in $GL(4,9)$. Let $\alpha$ be a generator
    of the multiplicative group of $\FF 9$ and $x_0,y_0\in V$ be
    \[
    x_0=u_1+u_3+u_4,\qquad y_0=u_2+u_3+\alpha u_4.
    \]
    Then $C_G(x_0)\cap C_G(y_0)=1$.
  \item Let $N=Z\left<d_1,d_2,s_1,s_2\right>\leq GL(4,5)$ and let $G$
    be the normalizer of $N$ in $GL(4,5)$. Let $x_0,y_0\in V$ be
    \[
    x_0= u_1+ u_2+2 u_3,\qquad y_0= u_2+ u_3+2 u_4.
    \]
    Then $C_G(x_0)\cap C_G(y_0)=1$.
\end{enumerate}
\end{thm}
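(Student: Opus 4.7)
All three parts will be handled by the same computational method, namely finite verification in GAP \cite{gap}; this is the technique announced in the sentence preceding the theorem. The ambient groups are small enough to handle exhaustively: since $R=\langle d_1,d_2,s_1,s_2\rangle$ is extraspecial of order $2^5$ of plus type (a central product of two copies of $D_8$), we have $\mathrm{Out}(R)\simeq O^+(4,2)$ of order $72$, so the normalizer $G$ satisfies $|G|\leq 72\cdot 16(q-1)$. Combined with $|V|\leq 9^4=6561$, this puts each case well within reach of direct computation.

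My plan is as follows. First, construct $N=Z\langle d_1,d_2,s_1,s_2\rangle\leq GL(4,\FF q)$ from the explicit matrices $d_1,d_2,s_1,s_2$ displayed in Section \ref{subsec:mon}, together with a scalar generator of $Z\simeq\FF q^\t$. Second, compute $G=N_{GL(4,q)}(N)$; for part 1 additionally extract a Hall $\{3\}'$-subgroup $H\leq G$ (which exists since $G$ is solvable, being an extension of the solvable group $N$ by a subgroup of the solvable group $O^+(4,2)\simeq S_3\wr S_2$), and for parts 2 and 3 put $H=G$. Third, for the explicit vectors $x_0,y_0\in V$ listed in the statement, form the pointwise stabilizer $C_H(x_0)\cap C_H(y_0)$ and verify that it is the trivial subgroup. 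Each step is a routine GAP command on groups of order at most a few thousand acting on a vector space of at most a few thousand points.

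The only delicate issue is the restriction from $G$ to $H$ in part 1. For $q=3$ the full normalizer $G$ may have order divisible by $3$, so $G$ itself is not necessarily a coprime linear group; since Theorem \ref{thm:main_2base} is only required in the coprime case, a $3'$-Hall subgroup suffices, and the word \emph{suitable} in the theorem records that a specific choice of such a Hall subgroup is made. In parts 2 and 3 no such restriction is needed: for $q=5$ the order $|O^+(4,2)|=2^3\cdot 3^2$ is already coprime to $5$, and for $q=9$ the result is stated for the full normalizer (a stronger assertion than the coprime hypothesis demands). Beyond this bookkeeping there is no genuine obstacle; the real difficulty of the theorem lies in finding the pairs $(x_0,y_0)$, and the computer search has already discharged that task, leaving the mechanical verification above.
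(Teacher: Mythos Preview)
Your proposal is correct and matches the paper's approach exactly: the paper gives no proof beyond the sentence ``In the remaining cases we have found the following pair of vectors by using the GAP system \cite{gap}'', so a direct computational verification of the three stabilizer conditions is precisely what is intended. Your structural remarks (the order bound via $\mathrm{Out}(R)\simeq O^+(4,2)$, the solvability guaranteeing a Hall $3'$-subgroup in part 1, and the observation that parts 2 and 3 are stated for the full normalizer) are accurate and make explicit what the paper leaves implicit.
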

%-----------------
\begin{rem}
  In case one, $G$ does not have a two-element base. If $G_1\leq G$ is
  any $3'$-subgroup, then $g^{-1}G_1g\leq H$ for some $g\in G$. By
  applying $g$ to the basis elements we get $g(x_0),\ g(y_0)$ is a base for
  $G_1$.
\end{rem}
%--------------------------------------------------------
The constructions given in the last two theorems have the common
property that $y_0$ is a sum of exactly three basis vectors with
non-zero coefficient. Capitalizing this property, we shall give
a uniform construction in any case when
$N\nor G\leq GL(2^k,q)$ for all $k\geq 3$.

We note that in case of $n\geq 128$ 
we could give similar constructions as we did in Theorem \ref{thm:rneq2}.
However, for a more uniform discussion we alter these constructions a
bit, so it will be adequate even in smaller dimensions. The point of
our modification is that we do not choose $x$ as a basis element this time,
rather as a linear combination of exactly three basis vectors.
Although this effects that $C_G(x)$ will not be monomial any more,
we can cure this problem by an appropriate choice of $y$.

Again, let $\{ e_1, e_2,\ldots, e_k\}\sbs W=\{ u_1, u_2,\ldots, u_n\}$
be a basis of $W$ considering $W$ as a $\FF 2$-vector space (see the
paragraph before Theorem \ref{thm:reg_partition}).  Choosing the
indexing of the basis vectors $ u_1, u_2,\ldots, u_n$ appropriately,
we can assume that $\langle e_1, e_2\rangle_W=\{ u_1, u_2, u_3,
u_4\}$.  Let $V'=\left< u_1, u_2, u_3, u_4\right>\leq V$ be the
subspace generated by the first four basis vectors, and let $N_N(V')$
be the subgroup of elements of $N$ fixing $V'$.  The restriction of
$N_N(V')$ to $V'$ defines an inclusion $N_N(V')/C_N(V')$ into
$GL(V')$, so we get a subgroup $N'=Z\left<d_1,d_2,s_1,s_2\right>\leq
GL(V')$.  If $g\in N_G(V')$, then it is clear that $g_{V'}$ normalizes
$N'$. Using the results of Theorem \ref{thm:GL47} and Theorem
\ref{thm:GL43}, we can define $x_0,y_0\in V'$ such that $y_0$ is
the linear combination of exactly three basis vectors and $N_G(V')\cap
C_G(x_0)\cap C_G(y_0)$ acts trivially on $V'$.  Starting from the
vectors $x_0,y_0$, we search for a base $\{x,y\}\sbs V$ of the form
$x=y_0,\ y=x_0+v$, where $v\in V'':=\left< u_5, u_6,\ldots,
  u_n\right>$.  The following lemma indicates why this form is useful.
%------------------------------------------------------------------
\begin{lem} \label{lem:x_three}
  $C_G(y_0)$ fixes both subspaces $V'$ and $V''$.
  As a result, for any $v\in V''$ we have that
  $C_G(y_0)\cap C_G(x_0+v)=C_G(y_0)\cap C_G(x_0)\cap C_G(v)$ acts
  trivially on $V'$. In particular, $C_G(y_0)\cap C_G(x_0+v)$ consists of
  monomial matrices.
\end{lem}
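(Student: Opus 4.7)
The plan is to pin down $C_G(y_0)$ by analyzing the subgroup $T:=N\cap C_G(y_0)$ and characterizing $V'$ as its common fixed space. First I would show $T\leq D$: if $n=ds\in N$ with $d\in D,\ s\in S$ satisfies $ny_0=y_0$, then comparing supports forces $s$ to setwise-stabilize the three-element support of $y_0$ inside $\{u_1,u_2,u_3,u_4\}$. Since $S$ acts regularly on $\{u_1,\ldots,u_n\}$ (Theorem \ref{thm:N_mon}/2) and the relevant supports are, up to relabelling, one of $\{e_1,e_2,e_1+e_2\}$ or $\{0,e_2,e_1+e_2\}\sbs\langle e_1,e_2\rangle_W$, a direct translation check in $S\simeq\FF 2^k$ rules out any nontrivial $s$.

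Next I would identify $C_V(T)$ with $V'$. For $t=zd_1\in D$ with $z\in Z,\ d_1\in D_1$, the equation $ty_0=y_0$ forces $z\cdot\chi_j(d_1)=1$ for every $j$ in the support, where $\chi_j$ is the $D_1$-weight of $\langle u_j\rangle$ (Theorem \ref{thm:N_mon}/3). In each of the relevant cases either $u_1$ lies in the support (so $\chi_0\equiv 1$ forces $z=1$) or the three weights multiply to the trivial character (ruling out $z=-1$); hence $T\leq D_1$ and $T$ is exactly the annihilator in $D_1$ of the $\FF 2$-span of the support. That span is precisely $\langle e_1,e_2\rangle_W$, whose dual weights are $\chi_1,\ldots,\chi_4$, so by Pontryagin duality between $D_1$ and $W$ one gets $C_V(T)=\langle u_1,u_2,u_3,u_4\rangle=V'$. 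The sum of the remaining $T$-isotypic components is then precisely $V''$.

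Now $C_G(y_0)$ normalizes $T$: for $g\in C_G(y_0)$ and $t\in T$, $gtg^{-1}\in N$ (as $G$ normalizes $N$) and $(gtg^{-1})y_0=g(ty_0)=gy_0=y_0$, so $gtg^{-1}\in T$. Therefore $g$ permutes the $T$-isotypic components on $V$, necessarily fixing the trivial one $V'=C_V(T)$ and hence also its complementary sum $V''$. For the remaining assertion, any $g\in C_G(y_0)\cap C_G(x_0+v)$ with $v\in V''$ stabilizes $V'$ and $V''$; splitting $g(x_0+v)=x_0+v$ along $V=V'\oplus V''$ with $x_0\in V',\ v\in V''$ yields $gx_0=x_0$ and $gv=v$, proving the claimed equality of subgroups. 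Then $g\in N_G(V')\cap C_G(x_0)\cap C_G(y_0)$, which by construction in Theorems \ref{thm:GL47} and \ref{thm:GL43} acts trivially on $V'$; in particular $g$ fixes $u_1$, so $g$ is monomial by Theorem \ref{thm:u1_centr}.

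The main obstacle I expect is the first two steps: ruling out a nontrivial permutation part in $T$ and then forcing $z=1$ on the central part. Both are small case-by-case checks tied to the specific supports of $y_0$ chosen in Theorems \ref{thm:GL47} and \ref{thm:GL43}, but once the characterization $V'=C_V(T)$ is in hand, the invariance of $V'$ and $V''$ under $C_G(y_0)$, and the final decomposition argument for $g(x_0+v)=x_0+v$, are purely formal.
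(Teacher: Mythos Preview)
Your approach is correct and coincides with the paper's: both arguments identify $T=C_N(y_0)$, show it lies in $D$ (using that the three-element support of $y_0$ cannot be stabilized by a nontrivial translation in $S\simeq\FF 2^k$), deduce $C_V(T)=V'$, and then use $T\nor C_G(y_0)$ to conclude that $C_G(y_0)$ permutes the $T$-isotypic pieces, fixing $V'$ and hence $V''$. The paper phrases the key identification as ``$M=C_N(y_0)=C_N(V')$ with $|D_1:M|=4$, so $V'$ is the trivial homogeneous component of $M$'', whereas you phrase it via Pontryagin duality; these are the same computation, and your explicit splitting of $g(x_0+v)$ along $V'\oplus V''$ and appeal to Theorem~\ref{thm:u1_centr} simply spell out what the paper calls ``evident''.
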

%------------------------
\begin{proof}
  It is enough to prove the inclusion $C_G(y_0)\leq N_G(V')\cap
  N_G(V'')$, the rest of the statement follows evidently.  Our proof
  is similar to the way we have proved that $C_G( u_1)$ consists of
  monomial matrices. As there are three basis elements in $y_0$ with
  non-zero coefficients and $S\simeq Z_2^k$ permutes the
  basis elements regularly, we get $C_N(y_0)\leq D$, i.e., every element of
  $C_N(y_0)$ is diagonal.  Hence every element of $C_N(y_0)$ fixes the
  three basis elements appearing in $y_0$. Using the assumption that
  $ u_1, u_2, u_3, u_4$ corresponds to a (two dimensional)
  subspace $S_2\leq S$, it follows easily that any element of $D$
  fixing three of the basis elements $ u_1, u_2, u_3, u_4$
  must fix the fourth one, too. Let $M=C_N(y_0)=C_N(V')$.  It follows
  that $|D_1:M|=4$, so $V'$ is just the homogeneous component of $M$
  corresponding to the trivial representation, while $V''$ is the sum
  of the other homogeneous components of $M$. As $M\nor C_G(y_0)$,
  every element of $C_G(y_0)$ permutes the homogeneous components of
  $M$. Since $y_0\in V'$, we get that $C_G(y_0)$ fixes $V'$, so it also
  fixes the sum of the other homogeneous components of $M$, which is
  $V''$.
\end{proof}
%--------------------------------------------------------
By the previous lemma $C_G(y_0)\cap C_G(x_0+v)$ consists of monomial
matrices for any $v\in V''$, so we can use part \textit{3} of Theorem
\ref{thm:reg_partition} to define a $\pi(C_G(x_0)\cap
C_G(y_0))$-regular partition on $W$.
%------------------------------------------------ 
\begin{thm}\label{thm:req2}
  By part \textit{3} of Theorem \ref{thm:reg_partition} let $W=\{
  e_1\}\cup\{ e_2\}\cup\Omega_3\cup\Omega_4$ be a $\pi(C_G(x_0)\cap
  C_G(y_0))$-regular partition of $W=\{ u_1, u_2,\ldots,
  u_n\}$. Let the vectors $x,y\in V$ be defined as
  \begin{align*}
    &\textrm{For\ \ }|W|\neq 16:&& x=y_0,\ y=x_0+v,\textrm{\qquad where\ \ } 
    v=\!\!\!\!\sum_{ u_i\in\Omega_4\setminus\langle  e_1, e_2\rangle_W}
    \!\!\!\! u_i,\\
    &\textrm{For\ \ }|W|=16:    && x=y_0,\ y=x_0+v,\textrm{\qquad where\ \ } 
    v=0 e_3+2 e_4+
    \!\!\!\!\sum_{\begin{array}{cc}\s 5\leq i\leq 16\\
        \s u_i\notin\{ e_3, e_4\}\end{array}}\!\!\!\! u_i.
  \end{align*}
  Then we have $C_G(x)\cap C_G(y)=1$.
\end{thm}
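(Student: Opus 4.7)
Take any $g\in C_G(x)\cap C_G(y)=C_G(y_0)\cap C_G(x_0+v)$. The plan is to show $g=1$ by first pinning down the permutation part $\pg$ and then the diagonal part $\dg$ in the monomial decomposition $g=\dg\pg$. Lemma \ref{lem:x_three} supplies three essential facts in one stroke: $g$ is monomial, $g$ acts trivially on $V'$, and therefore $g\in C_G( u_1)\cap C_G(x_0)\cap C_G(y_0)$. The second fact makes $\pg$ a linear automorphism of $W\cong\FF 2^k$ fixing $V'\cap W=\langle e_1,e_2\rangle_W$ pointwise (so already fixing $\{e_1\}$ and $\{e_2\}$); the relation $g\in C_G( u_1)$ opens up Theorem \ref{thm:u1_centr}, whose counting part \textit{4} will be our main tool; and $g\in C_G(x_0)\cap C_G(y_0)$ makes the $\pi(C_G(x_0)\cap C_G(y_0))$-regular partition of $W$ available to force $\pg=1$.

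The workhorse identity is the coefficient equation $d_j\, v_{\sigma^{-1}(j)}=v_j$ valid at every index $j$, where $\sigma:=\pg$ and $d_j$ denotes the $j$-th diagonal entry of $\dg$. Consider first the case $|W|\neq 16$: since $v$ has coefficient $1$ on $\Omega_4\setminus V'$ and $0$ elsewhere, the identity at $j\in\Omega_4\setminus V'$ forces $v_{\sigma^{-1}(j)}\neq 0$, hence $\sigma^{-1}(j)\in\Omega_4\setminus V'$ and $d_j=1$. Thus $\sigma$ preserves $\Omega_4$ setwise (it already fixes $\Omega_4\cap V'$ pointwise), hence also $\Omega_3$, and combined with the fact that $\sigma$ fixes $\{e_1\}$ and $\{e_2\}$, regularity of the partition forces $\sigma=1$. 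The surviving diagonal $g=\dg$ has $d_j=1$ throughout $V'\cup(\Omega_4\setminus V')$, leaving at most $|\Omega_3\setminus V'|\leq|\Omega_3|<|W|/4=n/4$ positions with $d_j\neq 1$; hence more than $3n/4$ entries equal $1$, and Theorem \ref{thm:u1_centr}\textit{4} delivers $\dg=1$, so $g=1$.

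The case $|W|=16$ is the main obstacle: the generic argument just misses since $|\Omega_3|=5$ whereas $|W|/4=4$, and this is precisely why $v$ has been boosted by the entries $0\cdot e_3$ and $2\cdot e_4$. The coefficient $0$ at $e_3$ forces $v_{\sigma^{-1}(e_3)}=0$, and since $\sigma$ preserves $W\setminus V'$ this gives $\sigma(e_3)=e_3$, so $\sigma$ fixes $e_1,e_2,e_3$. We then split on $\sigma(e_4)$. If $\sigma(e_4)=e_4$ then $\sigma=1$ by linearity, and essentially the same count as before yields $\dg=1$. If $\sigma(e_4)\neq e_4$, then $d_{e_4}v_{\sigma^{-1}(e_4)}=2$ forces $d_{e_4}=2$ with $\sigma^{-1}(e_4)=w'$ such that $v_{w'}=1$; a parallel analysis of the $10$ positions $j$ with $v_j=1$ reveals that $9$ of them satisfy $d_j=1$, the exception being $j=\sigma(e_4)$. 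Together with the four $V'$ positions, at least $13$ entries of $\dg$ equal $1$, exceeding $3n/4=12$; Theorem \ref{thm:u1_centr}\textit{4} then forces $\dg=1$, contradicting $d_{e_4}=2\neq 1$. Hence $g=1$ in every case.
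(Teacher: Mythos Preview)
Your proof is correct and follows essentially the same route as the paper's: invoke Lemma~\ref{lem:x_three} to obtain that $g$ is monomial and trivial on $V'$, use the coefficient pattern of $v$ to show that $\pi(g)$ stabilizes the parts of the regular partition (hence $\pi(g)=1$), and finish with the $\tfrac34 n$ bound of Theorem~\ref{thm:u1_centr}\textit{4}. Your treatment of the $|W|=16$ case is slightly more explicit than the paper's---you split cleanly into $\sigma(e_4)=e_4$ and $\sigma(e_4)\neq e_4$ and count the diagonal ones in each sub-case---whereas the paper compresses this into one sentence asserting ``2 or 3'' non-identity diagonal entries; but the underlying arithmetic is identical.
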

%------------------
\begin{proof}
  Let $g\in C_G(x)\cap C_G(y)$. We know by the previous lemma that $g$
  is a monomial matrix fixing all elements of $\langle e_1,
  e_2\rangle_W$.  

  In case of $|W|\neq 16$ also $\Omega_3$ is fixed by
  $\pi(g)$, since $\Omega_3\cup(\Omega_4\setminus\langle e_1,
  e_2\rangle_W)= W\setminus\langle e_1, e_2\rangle_W$, and a basis
  element from $W\setminus\langle e_1, e_2\rangle_W$ occur with
  coefficient $0$ in $v$ if and only if it is an element of
  $\Omega_3$.  It follows that $\pi(g)=1$.  Hence $g=\dg$ is a
  diagonal matrix, and any element in its main diagonal not
  corresponding to $\Omega_3$ must be $1$. Furthermore, since $|W|\neq
  16$, $|\Omega_3|<\frac{1}{4}|W|$ by part \textit{3)} of Theorem
  \ref{thm:reg_partition}, so we get $g=\dg=1$ by using part
  \textit{4} of Theorem \ref{thm:u1_centr}.

  In case of $|W|=16$ we have $\pi(g)( e_3)= e_3$. Now, if
  $\pi(g)( e_4)= e_4$ does not hold, then the number of elements
  in the main diagonal of $\dg$ different from $1$ should be $2$ or
  $3$, which contradicts to part \textit{4} of Theorem
  \ref{thm:u1_centr}.  Hence $\dg=1$ and $\pi(g)( e_4)= e_4$. As
  $ e_1, e_2, e_3, e_4$ is a base for the $\FF 2$-space
  $W$, we get $g=\pi(g)=1$, which proves the identity $C_G(x)\cap
  C_G(y)=1$.
\end{proof}
%========================================================
\subsection{Finding a two-element base in case when 
  \texorpdfstring{$N$}{N} is not monomial}
Now we handle the case when $Z\leq N\leq G\leq GL(U)$ is not monomial.
Since we will use the results of Subsection \ref{subsec:mon}, in order
to be more consistent with our previous notation we denote the whole
vector space by $U$ this time.  Thus, $U$ is an $n=2^k$ dimensional
vector space over $\FF q$, and it is absolutely irreducible as an $\FF
qN$-module.  Furthermore, $q\equiv -1\mod 4$ and $N=Q\star N_1$, where
$Q$ is a quaternion group and $N_1$ is a central product of some
$D_8$'-s and the group of scalar matrices.  Using Lemma
\ref{lem:tensor_direct} it follows that $U= W\otimes V$, for some faithful
absolutely irreducible $\FF qQ$-module $W$ and for some faithful
absolutely irreducible $\FF qN_1$-module $V$. Using that $Q$ has a
unique faithful irreducible representation over $\FF q$, and it is
$2$-dimensional, we have $\dim W=2$ and $\dim V=n/2$.  Let $\{w_1,
w_2\}$ be a basis of $W$ and $V_1=\langle w_1\rangle\otimes V,\
V_2=\langle w_2\rangle\otimes V$.  Then $U=V_1\oplus V_2$ is a direct
decomposition of $U$ into irreducible $\FF qN_1$-modules. Let
$N_G(V_1)$ be the elements of $G$ leaving $V_1$ invariant.  The restriction of
$N_G(V_1)$ to $V_1\simeq V$ defines a homomorphism $N_G(V_1)\ra GL(V)$
with image $G_1\simeq N_G(V_1)/C_G(V_1)$.  Furthermore, $N_1\leq
N_G(V_1)$ is included into $G_1$.  Thus, $N_1\leq G_1\leq GL(V)$ and
we have the monomial case situation.  Using the results of the
previous subsection we have the following
%----------------------------------------------------------------
\begin{thm}\label{thm:mon_sum}
  \begin{enumerate}
  \item There is a decomposition $N_1=D\rtimes S$, such that
    $D=Z\times D_1$ and $D_1\simeq S\simeq C_2^{k-1}$. Furthermore,
    $N=(Q\star D)\rtimes S$.  If $g\in QD$ has an eigenvector in $U$,
    then $g\in D$.
  \item There is a basis $\mb {n/2}\in V$ with the following
    properties
    \begin{enumerate}
    \item For each $1\leq i\leq n/2$ let $W_i=W\otimes\langle
      u_i\rangle$.  Then $W_1,W_2,\ldots,W_{n/2}$ are exactly the
      homogeneous components of $QD$ (or $D$ or $D_1$). The elements
      of $D$ act as scalar matrices on each $W_i$. Furthemore,
      $D_1=C_N(W_1)$.
    \item $S$ acts regulary on the sets $\{ w_1\otimes  u_1,
      w_1\otimes  u_2, \ldots, w_1\otimes  u_{n/2}\}$ and
      $\{ w_2\otimes  u_1, w_2\otimes  u_2, \ldots,
      w_2\otimes  u_{n/2}\}$. Thus, $S$ permutes the
      subspaces $W_1,\ldots, W_{n/2}$ regularly.
    \item For any $n\in N$ there is a unique decomposition
      $n=\delta_2(g)\pi(g)$ with $\delta_2(g)\in QD$ and $\pi(g)\in
      S$. Furthermore, $n$ is a monomial matrix if and only if
      $\delta_2(n)\in D$.
    \end{enumerate}
  \item There are two vectors $x_1,y_1\in V$ with the following properties
    \begin{enumerate}
    \item If $g\in N_G(V_1)$ and $g\in C_G( w_1\otimes x_1)\cap 
      C_G( w_1\otimes y_1)$, then $g_{V_1}=\id_{V_1}$.
    \item If $\dim V\geq 4$ then
      $x_1\in \langle  u_1, u_2, u_3, u_4\rangle$ 
      is a linear combination of exactly three basis vectors, while 
      $y_1\in  u_1+\langle  u_5,\ldots,  u_{n/2}\rangle$.
    \item If $\dim V\geq 4$ then $y_1$ contains at least half of 
      the basis vectors $ u_1,\ldots, u_{n/2}$ with 
      non-zero coefficients.
    \end{enumerate}
  \end{enumerate}
\end{thm}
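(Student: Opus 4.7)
My plan is to reduce everything to the monomial-case analysis of Subsection \ref{subsec:mon} applied to $N_1$ acting on $V$, using the tensor decomposition $U=W\otimes V$ coming from Lemma \ref{lem:tensor_direct} together with the fact that the extraspecial part of $N_1$ is a central product of dihedral groups only (so $N_1$ sits in the monomial case).

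For part 1, I would apply Theorem \ref{thm:N_mon} to $N_1\le G_1\le GL(V)$ to get $N_1=D\rtimes S$ with $D=Z\times D_1$ and $D_1\simeq S\simeq C_2^{k-1}$. Since $Q$ centralizes $N_1$ in the central product, $N=QN_1=(Q\star D)\rtimes S$. For the eigenvector claim, I would write $g=qd$ with $q\in Q$, $d\in D$ and note that $g$ acts on each homogeneous component $W_i=W\otimes\langle u_i\rangle$ as a scalar times $q|_W$; using $q\equiv -1\pmod 4$, the nontrivial elements of $Q/Z(Q)$ have irreducible characteristic polynomial $X^2+1$ on $W$ and thus no eigenvector, forcing $q\in\{\pm I\}\sbs Z\sbs D$.

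For part 2, I would take the basis $\{u_1,\ldots,u_{n/2}\}$ of $V$ supplied by Theorem \ref{thm:N_mon} and use the induced basis $\{w_j\otimes u_i\}$ of $U$. Since the $\langle u_i\rangle$ are pairwise inequivalent one-dimensional $D_1$-modules and $Z$ acts by scalars on all of $V$, the $W_i$ are precisely the $D$-homogeneous components and $D$ acts by scalars on each. Regular action of $S$ on the $W_i$ comes straight from Theorem \ref{thm:N_mon}(2). Uniqueness of the decomposition $n=\delta_2(n)\pi(n)$ with $\delta_2(n)\in QD$, $\pi(n)\in S$ is immediate from $N=(Q\star D)\rtimes S$; monomiality of $n$ then reduces to monomiality of $\delta_2(n)$ on each $W_i$, which by the same eigenvector argument of part 1 happens precisely when the $Q$-component lies in $Z(Q)$, i.e.\ $\delta_2(n)\in D$. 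For $D_1=C_N(W_1)$, any $n$ fixing $W_1$ pointwise must have $\pi(n)=1$ (as $S$ permutes the $W_i$ regularly) and then must act trivially on $\langle u_1\rangle$, which by part 3 of Theorem \ref{thm:N_mon} forces $n\in D_1$.

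For part 3, I would apply the results of Subsection \ref{subsec:mon} (Theorem \ref{thm:rneq2} in trivial cases, Theorems \ref{thm:GL47}, \ref{thm:GL43} for $\dim V=4$, and Theorem \ref{thm:req2} for $\dim V\ge 8$) to $N_1\le G_1\le GL(V)$ to obtain $x_1,y_1\in V$ with $C_{G_1}(x_1)\cap C_{G_1}(y_1)=1$. Any $g\in N_G(V_1)$ fixing $w_1\otimes x_1$ and $w_1\otimes y_1$ restricts to an element of $C_{G_1}(x_1)\cap C_{G_1}(y_1)=1$ via the identification $V_1\cong V$, so $g|_{V_1}=\id$, giving (a). For (b), I would choose $x_1=y_0$ and $y_1=x_0+v$ exactly as in the statement of Theorem \ref{thm:req2} (with the suitable $x_0,y_0$ drawn from Theorem \ref{thm:GL47} or \ref{thm:GL43}), so that $x_1$ is a linear combination of exactly three basis vectors in $\langle u_1,u_2,u_3,u_4\rangle$ and $y_1\in u_1+\langle u_5,\ldots,u_{n/2}\rangle$. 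For (c), the count of basis vectors appearing in $y_1$ is $1+|\Omega_4\setminus\langle e_1,e_2\rangle_W|$, which by Theorem \ref{thm:reg_partition}(3) exceeds $\tfrac{1}{2}\cdot n/2$ for all $|W|\ge 8$. The main obstacle I foresee is checking this count uniformly across the small cases $\dim V\in\{4,8,16\}$, where the partition of Theorem \ref{thm:reg_partition} is tight and possibly requires the ad-hoc constructions from Theorems \ref{thm:GL43} and \ref{thm:req2} to be used in place of the general recipe.
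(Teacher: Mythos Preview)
Your approach is exactly the intended one: the paper simply writes ``Omitted'' for this proof, because the theorem is a summary of what Subsection~\ref{subsec:mon} gives once applied to $N_1\le G_1\le GL(V)$ and then tensored with the $Q$-module $W$. Your reductions for parts 1, 2(a)--(b), and 3 are all correct. In particular, your observation that in the non-monomial setting one has $q\equiv -1\pmod 4$, so only Theorem~\ref{thm:GL47} and part~1 of Theorem~\ref{thm:GL43} are ever invoked (hence always $x_0=u_1$), is exactly what makes 3(b) go through; you are right that the small cases $q\in\{5,9\}$ never arise here.

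There is one genuine slip. In your argument for the ``only if'' direction of 2(c) you write that monomiality of $\delta_2(n)$ on $W_i$ ``by the same eigenvector argument of part~1'' forces the $Q$-component into $Z(Q)$. That does not follow: an anti-diagonal $2\times 2$ matrix is monomial yet has no eigenvector over $\FF q$ when $q\equiv -1\pmod 4$, and for a generic choice of $\{w_1,w_2\}$ some non-central element of $Q$ (e.g.\ the one with matrix $\bigl(\begin{smallmatrix}0&-1\\1&0\end{smallmatrix}\bigr)$) is monomial. So as stated, the ``only if'' direction needs either a specific choice of the basis $\{w_1,w_2\}$ of $W$ (one in which no non-central element of $Q$ is monomial), or a separate short argument ruling out the anti-diagonal case. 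This does not affect anything downstream: Theorem~\ref{thm:req2_notmon} only ever uses the eigenvector implication of part~1 and the ``if'' direction of 2(c), both of which you have correctly justified.
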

%-----------------
\begin{proof}
  Omitted.
\end{proof}
%-----------------------------------------------------------------
\begin{thm}\label{thm:req2_notmon}
  Using the vectors $x_1,\ y_1\in V$ as in part \textit{3} of 
  Theorem \ref{thm:mon_sum} let
  \begin{align*}
    x&= w_1\otimes  u_1,&&y= w_2\otimes 
     u_1+ w_1\otimes  u_2,&&\textrm{for }\dim V=2;\\
    x&= w_1\otimes x_1+ w_2\otimes 
     u_1&&y= w_1\otimes y_1,&&\textrm{for }\dim V\geq 4.
  \end{align*}
  Then $C_G(x)\cap C_G(y)=1$.
\end{thm}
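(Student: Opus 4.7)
For $\dim V = 2$, the space $U$ is four-dimensional and I would follow the template of Theorem \ref{thm:GL47}. Since $x = w_1 \otimes u_1$ is a basis vector of $U$ and a $D$-eigenvector for a distinguished character (by part 2(a) of Theorem \ref{thm:mon_sum}), every $g \in C_G(x)$ is a monomial matrix in the basis $\{w_a \otimes u_i\}$. Writing $g = \dg\pg$ and imposing $gy = y$ for $y = w_2 \otimes u_1 + w_1 \otimes u_2$ restricts the permutation part to a small set of possibilities, all of which are ruled out by a direct commutator calculation in $N$, using the hypothesis $q \equiv -1 \mod 4$ (which ensures no noncentral element of the quaternion group $Q$ has an eigenvector in $W$ over $\FF q$).

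For $\dim V \geq 4$, let $g \in C_G(x) \cap C_G(y)$. I would first analyze $C_N(y)$: any $h \in N$ decomposes uniquely as $h = (\tau d)s$ with $\tau \in Q$, $d \in D$, and $s \in S$, and the relation $hy = y = w_1 \otimes y_1$, combined with $y_1$ having at least $n/4 \geq 2$ nonzero coefficients in $\{u_1, \ldots, u_{n/2}\}$ (by part 3(c) of Theorem \ref{thm:mon_sum}), forces $\tau$ to fix the line $\langle w_1 \rangle$ in $W$. Because $q \equiv -1 \mod 4$, noncentral elements of $Q$ have no eigenvectors in $W$ over $\FF q$, so $\tau \in Z(Q) \leq Z$ and $C_N(y) \leq N_1$; since $N_1$ acts trivially on the $W$-factor, $C_N(y)$ preserves both $V_1$ and $V_2$.

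Next I would lift this constraint to $g$ itself. Normality $N \nor G$ gives $C_N(y) \nor C_G(y)$, so $g$ permutes the isotypic summands of $C_N(y)$ on $U$, which by the irreducibility of $N_1$ on each $V_i$ are precisely $\{V_1, V_2\}$. Since $y \in V_1 \setminus V_2$ is fixed by $g$, the swap case is impossible, so $gV_1 = V_1$; an analogous argument using the $V_2$-component $w_2 \otimes u_1$ of $x$ gives $gV_2 = V_2$. With $g$ now block-diagonal with respect to $U = V_1 \oplus V_2$, the equality $gx = x$ splits into $g(w_1 \otimes x_1) = w_1 \otimes x_1$ and $g(w_2 \otimes u_1) = w_2 \otimes u_1$. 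Applying part 3(a) of Theorem \ref{thm:mon_sum} to $g|_{V_1} \in N_G(V_1)$, which centralizes $w_1 \otimes x_1$ and $w_1 \otimes y_1 = y$, yields $g|_{V_1} = \id$. Conjugating by an element $\tau_0 \in Q$ with $\tau_0 w_1$ proportional to $w_2$ swaps $V_1$ with $V_2$ and reduces the analysis of $g|_{V_2}$ to the $V_1$-case, giving $g|_{V_2} = \id$ and hence $g = 1$.

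The main obstacle is justifying that $g$ permutes $\{V_1, V_2\}$: since $V_1, V_2$ depend on the choice of basis $\{w_1, w_2\}$ of $W$ and $N_1$ need not be normal in $G$, a characteristic-subgroup argument is unavailable. My proposed resolution is to identify $\{V_1, V_2\}$ with the canonical isotypic decomposition of $C_N(y)$ on $U$, which is determined by $y$ alone and normalized by $C_G(y)$.
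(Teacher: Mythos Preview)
Your proposal has two substantive gaps, both stemming from a misidentification of the relevant isotypic decompositions.

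\medskip
\textbf{Case $\dim V=2$.} You claim that every $g\in C_G(x)$ is monomial in the basis $\{w_a\otimes u_i\}$. This is precisely what fails in the non-monomial situation. The subgroup normalized by $C_G(x)$ is $C_N(x)=C_N(W_1)=D_1$ (Theorem~\ref{thm:mon_sum}, 2(a)), and since $D_1\leq N_1$ acts trivially on the $W$-factor, its homogeneous components on $U$ are the two-dimensional blocks $W_1,W_2$, not the one-dimensional lines $\langle w_a\otimes u_i\rangle$. So $g$ only permutes $W_1,W_2$; within each $W_i$ it may act by an arbitrary element of $GL(2,q)$, and there is no decomposition $g=\delta(g)\pi(g)$ as you assume. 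The paper instead shows that $g$ fixes both $W_i$ setwise, then uses the commutator $[g,s]\in N$ (for $s\in S$, $s\neq 1$) together with part~1 of Theorem~\ref{thm:mon_sum} to force $g$ to be scalar on each $W_i$.

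\medskip
\textbf{Case $\dim V\geq 4$.} Your key step is that the $C_N(y)$-isotypic pieces of $U$ are exactly $V_1,V_2$. This is false for the same reason: you correctly show $C_N(y)\leq N_1$, but any subgroup of $N_1$ acts trivially on $W$, so its isotypic decomposition of $U=W\otimes V$ has the form $W\otimes V_\chi$ for $V_\chi\leq V$ a homogeneous piece; such a subspace is never $\langle w_i\rangle\otimes V$. Concretely, for $\dim V=4$ one has $y_1=u_1$, so $C_N(y)=D_1$, whose isotypic components are $W_1,W_2,W_3,W_4$ and not $V_1,V_2$. Thus your mechanism for proving that $g$ preserves $V_1$ and $V_2$ collapses. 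The paper works instead with the $W_i$-decomposition (coming from $D_1$, or from $C_N(x)$ in higher dimensions), establishes $g_{U'}=\id$ on the first four $W_i$'s, and then uses a commutator argument $[\delta_2(g),S]\subseteq D$ to show $g$ is scalar on every $W_i$; only at that point does $g$ become monomial and preserve $V_1$, allowing part~3(a) to be invoked.

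A secondary issue: even if $g|_{V_1}=\id$ were established, your conjugation trick with $\tau_0\in Q$ does not finish the argument, since $\tau_0 g\tau_0^{-1}$ fixes $\tau_0 x$ and $\tau_0 y$, which have the wrong components to feed back into part~3(a). The paper avoids this by combining $g|_{V_1}=\id$ with the scalar action on each $W_i$: since $W_i\cap V_1\neq 0$, the scalar on $W_i$ must be $1$, whence $g=1$.
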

%-----------------
\begin{proof}
  In case of $\dim V=2$ let $g\in C_G(x)\cap C_G(y)$. Then $g$
  normalizes $C_N(x)=C_N(W_1)=D_1$, hence it permutes the homogeneous
  components of $D_1$, which are $W_1,W_2$ by part \textit{2.a} of
  Theorem \ref{thm:mon_sum}. It follows that $g$ fixes setwise both
  $W_1$ and $W_2$.  Using that $g$ also fixes $y=w_2\otimes
  u_1+w_1\otimes u_2$ we get that $g$ fixes each of the vectors
  $w_1\otimes u_1, w_2\otimes u_1$ and $w_1\otimes u_2$.  Let $s\in
  S$ be a non-identity element of $S$. Then $[g,s]\in N$, and
  $[g,s]( w_1\otimes u_1)=( w_1\otimes u_1)$. Thus, $[g,s]\in D$ and
  both resctrictions $[g,s]_{W_1}$ and $[g,s]_{W_2}$ are scalar
  matrices.  Then the same holds for $g_{W_1}$ and $g_{W_2}$, so
  $g=1$.

  In case $\dim V= 4$ let $g\in C_G(x)\cap C_G(y)$. Then $g$
  normalizes $C_N(y)=C_N( w_1\otimes  u_1)=D_1$, hence it
  permutes its homogeneous components, that is, $W_1,W_2,W_3,W_4$.  As
  $g(y)=y$, we also have $g(W_1)=W_1$. As $x$ contains $ w_2\otimes
   u_1$ with non-zero coefficient, it follows that $g$ acts
  trivially on $W_1$. Furthermore, $g$ has a decomposition
  $\delta_2(g)\pi(g)$ with $\delta_2(g)$ fixing any of
  $W_1,W_2,W_3,W_4$ and $\pi(g)$ permuting the basis elements.  As in
  part \textit{3} of Theorem \ref{thm:u1_centr}, one can easily see
  that $[\delta_2(g),S]\sbs QD$.  The three basis vectors $
  w_1\otimes  u_i$ contained in $x$ with non-zero coefficients are
  eigenvectors of $\delta_2(g)$, so $[\delta_2(g),s]$ has an
  eigenvector for any $s\in S$, thus $[\delta_2(g),S]\sbs D$ by part
  \textit{1} of Theorem \ref{thm:mon_sum}. As $\delta_2(g)$ acts
  trivially on $W_1$ and $S$ permutes $W_1,W_2,W_3,W_4$ in a
  transitive way, we get that $\delta_2(g)$ acts as a scalar matrix on each
  $W_i$.  Hence $g$ is a monomial matrix, so it fixes $V_1=
  \langle w_1\rangle\otimes V$.  Then $g_{V_1}=\id_{V_1}$ by part \textit{3.a} of
  Theorem \ref{thm:mon_sum}.  Using again that $g=\delta_2(g)$ acts as
  a scalar matrix on each $W_i$, we get $g=1$, as claimed.

  In case of $\dim V>4$ let $U'=W_1\oplus W_2\oplus W_3\oplus W_4$. 
  First we claim that  
  \[
  C_N(x)=\{n\in N\,|\, n \textrm{ acts trivially on }U'\}.
  \]
  As the set of subspaces $W_1,W_2,W_3,W_4$ corresponds to a subspace
  of $S$ (see the paragraph before Lemma \ref{lem:x_three}) and $x$ is
  a linear combination of basis vectors with non-zero coefficients
  from three or four of the subspaces $W_1,W_2,W_3,W_4$, by using a
  similar argument as in Lemma \ref{lem:x_three} we get that $C_N(x)$
  permutes the subspaces $W_1,W_2,W_3,W_4$. Now, for any $n\in C_N(x)$
  there exist $ w_1\otimes  u_s, w_1\otimes  u_t$ occuring
  in $x$ with non-zero coefficients such that $n$ moves $
  w_1\otimes  u_s$ into a multiple of $ w_1\otimes  u_t$, so
  $ w_1\otimes  u_t$ is an eigenvector of $\delta_2(n)\in QD$,
  hence $\delta_2(n)\in D$.  It follows that $C_N(x)=C_N(
  w_1\otimes x_1)\cap C_N( w_2\otimes  u_1)$.  As $C_N(
  w_2\otimes  u_1)=D_1$, we get that $C_N(x)$ consists of diagonal
  matrices, so it fixes every basis element occuring in $x$ with
  non-zero coefficients. Using again that $W_1,W_2,W_3,W_4$
  corresponds to a two dimensional subspace of $S$, we get that $C_N(x)$
  acts trivially on $U'$.  As $x\in U'$, the inclusion $C_N(U')\sbs
  C_N(x)$ holds trivially.

  Let $g\in C_G(x)\cap C_G(y)$. Then $g$ normalizes $C_N(x)$, so it
  fixes the homogeneous component of $C_N(x)$ corresponding to the
  trivial representation of $C_N(x)$, which is exactly $U'$, and the
  sum of the other homogeneous components, that is,
  $W_5\oplus\ldots\oplus W_{n/2}$. Using the form of $y_1$, it follows
  that $g\in C_G( w_1\otimes  u_1)$. Therefore, $g(U')=U'$ and
  $g\in C_G(x)\cap C_G( w_1\otimes  u_1)$.  Then
  $g_{U'}=\id_{U'}$ follows at once from case $\dim V=4$.  As $g\in
  C_G( w_1\otimes  u_1)$, we get that $g$ normalizes $D_1=C_N(
  w_1\otimes  u_1)$, so $g$ permutes $W_1,W_2,\ldots ,
  W_{n/2}$. Let $g=\delta_2(g)\pi(g)$, where $\delta_2(g)$ fixes each
  of the subspaces $W_1,W_2,\ldots,W_{n/2}$, while $\pi(g)$ permutes
  the basis elements.  More than half of the basis vectors $
  w_1\otimes  u_i,\ 1\leq i\leq n/2$ are contained either in $y$,
  or in $U'$, so $\delta_2(g)$ contains eigenvectors from more than
  half of the basis vectors $ w_1\otimes  u_i,\ 1\leq i\leq
  n/2$.  (In fact, here we need to refer to $U'$ only in case $\dim
  V=8$, since otherwise even $y$ contains more than half of the basis
  vectors $ w_1\otimes  u_i$, see Theorem \ref{thm:req2}.)  It
  follows that $[\delta_2(g),s]$ has an eigenvector for any $s\in S$,
  thus $[\delta_2(g),S]\sbs D$.  From this point our proof is the same
  as it was for $\dim V=4$.
\end{proof}
%-------------------------------------------------------
\begin{rem}
  In fact, it can be checked that in this section we used the
  assumption $(|G|,|V|)=1$ only in case of $q=4,\ n=3$ and $q= 3,\
  n=2^k$. Moreover, it is clear that we can extend $x_0,y_0$ with some
  $z_0$ to a three element base of the full group $G$ of symplectic
  type in part \textit{1} of Theorem \ref{thm:GL43}. By extending the
  vectors $x,y$ in Theorems \ref{thm:req2} and \ref{thm:req2_notmon}
  with $z_0$ we get that even if the action is not coprime, a group
  $G\leq GL(2^k,3)$ of symplectic type does always have a base of size
  three.
\end{rem}
%%%%%%%%%%%%%%%%%%%%%%%%%%%%%%%%%%%%%%%%%%%%%%%%%%%%%%%%
\section{Proof of Theorem \ref{thm:main_2base}}
In the following let $\FF q$ be a finite field with $q$ elements of
characteristic $p$. Let $V$ be an $n$ dimensional vector space over
$\FF q$ and $G\leq GL(V)$ be a coprime linear group. Furthermore, let
$Z=Z(GL(V))\simeq \FF q^\t$ denote the group of non-zero scalar
matrices. Then it is clear that $G\leq GZ\leq GL(V)$
and $|GZ|\mid |G|(q-1)$, so $GZ$ is a coprime linear group containing $G$.
Therefore, in order to prove Theorem \ref{thm:main_2base} 
we can (and we will) assume without loss of generality that $G$ contains $Z$. 
%------------------------------------------------------
\begin{lem}\label{lem:semi}
  Let $G\leq \Gamma L(V)$ be a semilinear group acting on the $\FF
  q$-space $V$ such that $(|G|,|V|)=1$ and let $H=G\cap GL(V)$.  If
  $u_1,u_2\in V$ is a base for $H$, then there exists a $\gamma\in \FF
  q$ such that $u_1,u_2+\gamma u_1$ is a base for $G$.
\end{lem}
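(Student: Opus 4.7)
The plan is to analyze the subgroup
\[ G_0 := \{g \in G : g(u_1) = u_1,\ g(u_2) - u_2 \in \langle u_1 \rangle\} \]
and to reduce the existence of a suitable $\gamma$ to a pigeonhole count over $\FF q$. For each $g \in G$, let $\tau_g \in \mathrm{Gal}(\FF q/\FF p)$ be its Galois part, so $g(\lambda v) = \tau_g(\lambda) g(v)$ for all $\lambda \in \FF q$, $v \in V$, and $\tau_g = 1$ precisely when $g \in H$. If $g$ fixes both $u_1$ and $u_2 + \gamma u_1$, semilinearity forces
\[ g(u_2) = u_2 + (\gamma - \tau_g(\gamma)) u_1, \]
so $g \in G_0$. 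Defining $c \colon G_0 \to \FF q$ by $g(u_2) = u_2 + c(g) u_1$, the fixing condition on $\gamma$ becomes the affine equation $\gamma - \tau_g(\gamma) = c(g)$; thus $\gamma$ is ``bad'' (i.e., $u_1, u_2 + \gamma u_1$ fails to be a base for $G$) iff some $g \in G_0 \setminus \{1\}$ solves this equation.

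The key step is to show $G_0 \cap H = 1$. For $h_1, h_2 \in G_0 \cap H$, the $\FF q$-linearity of $h_1$ gives $c(h_1 h_2) = c(h_1) + c(h_2)$, so $c|_{G_0 \cap H}$ is a group homomorphism into $(\FF q, +)$; its kernel equals $C_H(u_1) \cap C_H(u_2) = 1$, since $u_1, u_2$ is a base for $H$. Hence $G_0 \cap H$ embeds into an elementary abelian $p$-group, but its order divides $|G|$, which is coprime to $p = \kar \FF q$, forcing $G_0 \cap H = 1$. Consequently, the map $g \mapsto \tau_g$ injects $G_0$ into $\mathrm{Gal}(\FF q/\FF p)$, so $G_0$ is cyclic of order at most $m$, where $q = p^m$.

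For the count, every $g \in G_0 \setminus \{1\}$ has $\tau_g \neq 1$, hence the solution set of $\gamma - \tau_g(\gamma) = c(g)$ is either empty or a coset of the fixed field $\FF q^{\tau_g}$, of size $p^{m/\mathrm{ord}(\tau_g)} \leq p^{m/2}$. Therefore
\[ |\{\text{bad } \gamma \in \FF q\}| \;\leq\; (|G_0| - 1)\, p^{m/2} \;\leq\; (m-1)\, p^{m/2} \;<\; p^m \;=\; q, \]
the last inequality being an easy check for every prime $p$ and every $m \geq 1$. So some $\gamma \in \FF q$ is good, and for that $\gamma$ the pair $u_1, u_2 + \gamma u_1$ is a base for $G$. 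The only nontrivial input is the triviality of $G_0 \cap H$: this is where the coprimality hypothesis $(|G|,|V|)=1$ is essential, since without it $G_0 \cap H$ could contain a large elementary abelian $p$-subgroup and the counting would break down entirely.
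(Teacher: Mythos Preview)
Your proof is correct and follows essentially the same strategy as the paper's: derive $g(u_2)=u_2+(\gamma-\tau_g(\gamma))u_1$ for any $g$ fixing both $u_1$ and $u_2+\gamma u_1$, use the coprimality hypothesis to show that the resulting set of ``bad'' elements meets $H$ trivially, and then count bad values of~$\gamma$. The paper shows $\langle\cup H_\alpha\rangle\cap H=1$ by computing $g^m(u_2)=u_2+m\delta u_1$ directly, whereas you embed $G_0\cap H$ into the additive group $(\FF q,+)$; these are the same idea in different clothing. The only real difference is the final estimate: the paper notes that elements generating the same cyclic subgroup of $G_0$ share the same coset $K_g$ of a fixed subfield, so distinct $|K_g|$'s range over proper divisors of $f=\log_p q$ and $\big|\bigcup K_g\big|\leq\sum_{d<f}p^d<p^f$, while your bound $(m-1)p^{m/2}<p^m$ is coarser but still adequate.
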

%------------------
\begin{proof}
  For any $g\in G$ let $\sigma_g\in\textrm{Gal}(\FF q|\FF p)$ denote
  the action of $g$ on $\FF q$.  For all $\alpha\in \FF q$ let
  $H_\alpha=C_G(u_1)\cap C_G(u_2+\alpha u_1)\leq G$.  Assuming that
  $C_H(u_1)\cap C_H(u_2)=1$, our goal is to prove that $H_\alpha=1$
  for some $\alpha\in \FF q$.  Let $g\in H_\alpha$. Thus, $g(u_1)=u_1$
  and $u_2+\alpha u_1=g(u_2+\alpha u_1)=
  g(u_2)+\alpha^{\sigma_g}u_1$. Hence
  $g(u_2)=u_2+(\alpha-\alpha^{\sigma_g})u_1$.  If $g\in \langle\cup
    H_\alpha\rangle$, then $g$ is the product of elements from several
  $H_\alpha$'s. It follows that $g(u_2)=u_2+\delta u_1$ for some
  $\delta\in \FF q$.

  We claim that $\langle\cup H_\alpha\rangle\cap H=1.$ Let $g\in
  \langle\cup H_\alpha\rangle\cap H$.  On the one hand, the action of
  $g$ on $V$ is $\FF q$-linear, since $g\in H$.  On the other hand,
  $g(u_1)=u_1$ and $g(u_2)=u_2+\delta u_1$ for some $\delta\in \FF q$
  by the previous paragraph.  If $o(g)=m$, then
  $u_2=g^m(u_2)=u_2+m\delta u_1$, so $m\delta=0$.  Using that $|G|$ is
  coprime to $p$, we get $m$ is not divisible by $p$, hence
  $\delta=0$. Therefore, $g(u_2)=u_2$ and $g\in C_H(u_1)\cap
  C_H(u_2)=1$.

  Let $g,h$ be two distinct elements of $\cup H_\alpha$, so
  $gh^{-1}\not\in H$.  Since $G/H$ is embedded into $\textup{Gal}(\FF
  q|\FF p)$, we get $\sigma_g\neq\sigma_h$. Furthermore, the subfields
  of $\FF q$ fixed by $\sigma_g$ and $\sigma_h$ are the same if and
  only if $\left<g\right>=\left<h\right>$.

  If $g\in H_\alpha\cap H_\beta$, then
  $g(u_2)=u_2+(\alpha-\alpha^{\sigma_g})u_1=u_2+(\beta-\beta^{\sigma_g})u_1$,
  so $\alpha-\beta$ is fixed by $\sigma_g$.  Let $K_g=\{\alpha\in \FF
  q\;|\;g\in H_\alpha\}$.  The previous calculation shows that $K_g$
  is an additive coset of the subfield fixed by $\sigma_g$, so
  $|K_g|=p^d$ for some $d|f=\log_p q$.  Since for any $d|f$ there is a
  unique $p^d$-element subfield of $\FF q$, we get $|K_g|\neq |K_h|$
  unless the subfields fixed by $\sigma_g$ and $\sigma_h$ are the
  same.  As we have seen, this means $\left<g\right>=\left<h\right>$.
  Consequently, $|K_g|\neq |K_h|$ unless $K_g=K_h$.  Hence we get
  \[
  \big|\!\!\!\!\bigcup_{g\,\in\,\cup H_\alpha\setminus\{1\}}\!\!\!\! K_g\big|
  \leq\sum_{d|f,\;d<f}p^d\leq 
  \sum_{d<f}p^d=\frac{p^f-1}{p-1}<p^f=|\FF q|.
  \]
  So there is a $\gamma\in \FF q$ which is not contained in $K_g$ for
  any $g\in\cup H_\alpha\setminus\{1\}$. This exactly means that
  $H_\gamma=C_G(u_1)\cap C_G(u_2+\gamma u_1)=1$.
\end{proof}
%------------
\begin{rem} 
  In general, if $G\leq \Gamma L(V)$ is arbitrary and 
  $H=G\cap GL(V)$, then $b(G)\leq b(H)+1$, see \cite[Lemma 3.2]{LS}. 
\end{rem}
%-------------------------------------
Now, we are ready to prove Theorem \ref{thm:main_2base} for arbitrary
coprime linear groups.
%-------------------------------------
\begin{proof}[Proof of Theorem \ref{thm:main_2base}]
  First, we can assume that $G$ is a primitive linear group by Theorem
  \ref{thm:imprim}. 

  Let $N\nor G$ be a normal subgroup of $G$.  Then $V$ is a
  homogeneous $\FF qN$-module, so $V=V_1\oplus V_2\oplus\ldots\oplus
  V_k$, where the $V_i$'s are isomorphic irreducible $\FF qN$-modules.
 
  In case $V_1$ is not absolutely irreducible, let $T\simeq\End_{\FF
    qN}(V_1)$. By Schur's lemma, $T$ is a proper field extension of
  $\FF q$, and
  \[C_{GL(V)}(N)=\End_{\FF qN}(V)\cap GL(V)\simeq GL(k,T).\]
  Furthermore, $L=Z(C_{GL(V)}(N))\simeq Z(GL(k,T))\simeq T^\t$.  Now,
  by using $L$, we can extend $V$ to a $T$-vector space of dimension
  $l:=\dim_T V=\frac{\dim_{\FF q} V}{\dim_{\FF q} T}<\dim_{\FF q} V$.
  As $G\leq N_{GL(V)}(L)$, in this way we get an inclusion $G\leq
  \Gamma L(l,T)$. Using Lemma \ref{lem:semi}, we can assume that
  $G\leq GL(l,T)$.  As $l=\dim_T V<\dim_{\FF q}(V)$, we can use
  induction on the dimension of $V$. Hence in the following we assume
  that $V$ is a direct sum of isomorphic absolutely irreducible $\FF
  qN$-modules for any $N\nor G$.

  Next, let $N\nor G$ and let $V=V_1\oplus\ldots\oplus V_k$ be a
  direct decomposition of $V$ into isomorphic absolutely irreducible
  modules.  By choosing a suitable basis in $V_1,V_2,\ldots, V_k$, we
  can assume that $G\leq GL(n,\FF q)$ such that any element of $N$ is
  of the form $A\otimes I_k$ for some $A\in N_{V_1}\leq GL(n/k,\FF
  q)$.  By using \cite[Lemma 4.4.3(ii)]{Kleidman} we get
  \[N_{GL(n,\FF q)}(N)=\{B\otimes C\,|\,B\in N_{GL(n/k,\FF
    q)}(N_{V_1}),\ C\in GL(k,\FF q)\}.\] Let
  \[
  G_1=\{g_1\in GL(n/k,\FF q)\,|\,\exists g\in G,g_2\in GL(k,\FF q)
  \textrm{ such that }g=g_1\otimes g_2\}.
  \] 
  We define $G_2\leq GL(k,\FF q)$ in an analogous way.  Then $G\leq
  G_1\otimes G_2$. (Here $G_1$ and $G_2$ are not homomorphic images of
  $G$, since $g=g_1\otimes g_2=\lambda\cdot g_1\otimes\lambda^{-1}g_2$
  for any $\lambda\in \FF q^\t$, so the map $g=g_1\otimes g_2\ra g_1$
  is not well-defined.)  However, for any $g=g_1\otimes g_2$ we have
  $g_1^{o(g)}=\lambda\cdot I_{n/k},\ g_2^{o(g)}=\lambda^{-1}\cdot
  I_{k}$ for some $\lambda\in \FF q^\t$, so $o(g_1)\mid|G||\FF q^\t|$
  and $o(g_2)\mid|G||\FF q^\t|$.  It follows that both $(|G_1|,p)=1$
  and $(|G_2|,p)=1$. If $1<k<n$, then by using induction for $G_1\leq
  GL(n/k,\FF q)$ and $G_2\leq GL(k,\FF q)$ we get $b(G_1)\leq 2$ and
  $b(G_2)\leq 2$.  By using Lemma \ref{lem:strong} and \cite[Lemma 3.3
  (ii)]{LS} we have
  \begin{align*}
    b(G)&\leq b(G_1\otimes G_2)=b^*(G_1\otimes G_2)\leq \\
    &\max(b^*(G_1),b^*(G_2))=\max(b(G_1),b(G_2))\leq 2.
  \end{align*}
  Hence in the following we can assume that for any normal subgroup $N\nor G$
  either $N\leq Z=Z(G)\simeq \FF q$ (and $N$ consist of scalar matrices) or 
  $V$ is an absolutely irreducible $\FF qN$-module.

  Let $Z\leq N\nor G$ be such that $N/Z$ is a minimal normal subgroup
  of $G/Z$. Then $N/Z$ is a direct product of isomorphic simple
  groups. If $N/Z$ is a direct product of cyclic groups of prime
  order, then $G$ is of symplectic type. We examined such groups in
  Section \ref{sec:symplectic}, where we proved that $b(G)\leq 2$ for
  such a group.

  In the following let $N/Z$ be a direct product of $t\geq 2$
  isomorphic non-Abelian simple groups. Then $N=L_1\star
  L_2\star\ldots\star L_t$ is a central product of isomorphic groups
  such that for every $1\leq i\leq t$ we have $Z\leq L_i,\ L_i/Z$ is
  simple.  Furthermore, conjugation by elements of $G$ permutes the
  subgroups $L_1,L_2,\ldots, L_t$ in a transitive way.  By choosing an
  irreducible $\FF qL_1$-module $V_1\leq V$, and a set of coset
  representatives $g_1=1,g_2,\ldots,g_t\in G$ of $G_1=N_G(V_1)$ such
  that $L_i=g_iL_1g_i^{-1}$, we get that $V_i:=g_iV_1$ is an
  absolutely irreducible $\FF qL_i$-module for each $1\leq i\leq t$.
  Now, $V\simeq V_1\otimes V_2\otimes\ldots\otimes V_t$ by Lemma
  \ref{lem:tensor_direct} and $G$ permutes the factors of this tensor
  product. It follows that $G$ is embedded into the central wreath
  product $G_1\wr_c S_t$.  If $t\geq 2$, then $b(G_1)\leq 2$ by using
  induction on $\dim V_1$, so $b(G) \leq 2$ by Theorem
  \ref{thm:tensor_main}.

  Finally, let $Z\leq N\nor G$ be such that $N/Z$ is a non-Abelian simple
  group.  Then $N_1=[N,N]\nor G$ is a quasisimple group. Let $\FF p$ be
  the prime field of $\FF q$. Viewing $V$ as an $\FF p G$-module it
  decomposes into the sum of isomorphic irreducible $\FF p
  N_1$-modules.  Let $V_1$ be an irreducible $\FF pN_1$-submodule of
  $V$ and $G_1=\{g\in G\,|\,g(V_1)=V_1\}$ be the stabilizer of $V_1$.
  Then there is a homomorphism $\varphi:G_1\rightarrow GL(V_1)$ which
  is faithful on $N_1$. Therefore, $\ker\varphi\cap N_1=1$, so
  $\ker\varphi\leq C_G(N_1)=C_G(N_1Z)=C_G(N)=Z$.  As any non-identity
  element $g\in Z=\FF q^\t\cdot I$ acts on $V\setminus\{ 0\}$
  fix-point freely, we get $\ker\varphi=1$, so $G_1$ is included in
  $GL(V_1)$. Therefore, we have the situation $N_1\nor G_1\leq
  GL(V_1)$, with $(|G_1|,|V_1|)=1$ and $V_1$ is a vector space over
  the prime field $\FF p$. Moreover, $N_1$ is quasisimple and $V_1$ is
  irreducible as an $\FF p N_1$-module.  By using the results of
  Section \ref{sec:quasisimple} there exist $x,y\in V_1\leq V$ such
  that $C_{G_1}(x)\cap C_{G_1}(y)=1$.  Let $g\in G$ be such that
  $g(x)=x,\ g(y)=y$ and let $N_1x=\{nx\,|\,n\in N_1\}$.  Then
  $gnx=gng^{-1}gx=gng^{-1}x\in N_1 x$ for any $n\in N_1$, so $N_1x$ is
  a $g$-invariant subset.  As the $\FF p$-subspace generated by $N_1
  x$ is exactly $V_1$, we get that $V_1$ is $g$-invariant, that is, $g\in
  G_1$.  This proves that $C_G(x)\cap C_G(y)=C_{G_1}(x)\cap
  C_{G_1}(y)=1$. So $b(G)\leq 2$, which completes our proof.
\end{proof}
%-----------------------------------------------------------------

\textbf{Acknowledgement.} 
The authors are very grateful to L.~Pyber for taking our attention 
to the papers \cite{goodwin} and \cite{kopa}. Without this information it is
very unlikely that we would have been able to handle the almost quasisimple case.
We are also thank P.~P.~P\'alfy for his helpful comments, especially for providing
us a much simpler proof than our original one for handling Case 3 of
Theorem \ref{thm:quasisimple_main}. We are also very grateful to
A.~Mar\'oti for his many suggestions and for his continuous
encouragement.
%-----------------------------------------------------------------
%%%%%%%%%%%%%%%%%%%%%%%%%%%%%%%%%%%%%%%%%%%%%%%%%%%%%%%%%%%%%%%%%%

%----------------------------------------------------------
\end{document}